\numberwithin{equation}{section}
\newcommand\RR{\mathbb{R}}
\newcommand\CC{\mathbb{C}}
\newcommand\NN{\mathbb{N}}
\newcommand\ZZ{\mathbb{Z}}
\newcommand\HH{\mathbb{H}}
\newcommand\Id{\mathrm{Id}}
\newcommand\ang[1]{\langle #1 \rangle}
\newcommand\Omegazh{{}^0\Omega^{1/2}}
\newcommand\diag{\mathrm{diag}}
\newcommand\diagz{\mathrm{diag}_0}
\newcommand\FL{\mathrm{FL}}
\newcommand\FR{\mathrm{FR}}
\newcommand\FF{\mathrm{FF}}
\newcommand\tL{\tilde \Lambda}
\newcommand\FBR{\mathrm{FBR}}
\renewcommand\Re{\operatorname{Re}}
\renewcommand\Im{\operatorname{Im}}
\newcommand\BR{\operatorname{BR}}
\newcommand\WF{\text{WF}_h}
\newcommand\supp{\operatorname{supp}}
\newcommand\blambda{{\boldsymbol\lambda}}
\newcommand\bsigma{{\boldsymbol\sigma
}}
\newcommand\bzeta{{\boldsymbol\zeta
}}
\newcommand\sqrtdelta{P}
\newcommand\spmeas{dE_{\sqrtdelta}(\boldsymbol\lambda)}
\newtheorem{theorem}{Theorem}
\newtheorem{lemma}[theorem]{Lemma}
\newtheorem{proposition}[theorem]{Proposition}
\newtheorem{corollary}[theorem]{Corollary}
\theoremstyle{remark}
\newtheorem{remark}[theorem]{Remark}
\begin{document}

\title{\textbf{Resolvent and Spectral Measure on Non-Trapping Asymptotically Hyperbolic Manifolds II: Spectral Measure, Restriction Theorem, Spectral Multipliers} }

\author{Xi Chen and Andrew Hassell}

\keywords{Asymptotically hyperbolic manifolds, spectral measure, restriction theorem, spectral multiplier.
}
\date{}

\begin{abstract} We consider  the Laplacian $\Delta$ on an asymptotically hyperbolic manifold $X$, as defined by Mazzeo and Melrose \cite{Mazzeo-Melrose}.
We give pointwise bounds on the Schwartz kernel of the spectral measure for the operator $(\Delta - n^2/4)_+^{1/2}$ on such manifolds, under the assumptions that $X$ is nontrapping and there is no resonance at the bottom of the spectrum. This uses the construction of the resolvent given by Mazzeo and Melrose \cite{Mazzeo-Melrose} (valid when the spectral parameter lies in a compact set),  Melrose, S\'{a} Barreto and Vasy \cite{Melrose-Sa Barreto-Vasy} (high energy estimates for a perturbation of the hyperbolic metric) and the present authors \cite{Chen-Hassell1} (see also \cite{Wang}) in the general high-energy case.

{We give two applications of the spectral measure estimates. The first,  following work due to Guillarmou and Sikora with the second author \cite{Guillarmou-Hassell-Sikora} in the asymptotically conic case, is a restriction theorem, that is, a $L^p(X) \to L^{p'}(X)$ operator norm bound on the spectral measure. The second is a  spectral multiplier result under the additional assumption that $X$ has negative curvature everywhere, that is, a bound on functions $F((\Delta - n^2/4)_+^{1/2})$ of the square root of the Laplacian, in terms of norms of the function $F$.}
Compared to the asymptotically conic case, our spectral multiplier result is weaker, but the restriction estimate is stronger. In both cases, the difference can be traced to the exponential volume growth at infinity for asymptotically hyperbolic manifolds, as opposed to polynomial growth in the asymptotically conic setting.

The pointwise bounds on the spectral measure established here will also be applied to Strichartz estimates in \cite{Chen-Hassell3}.\end{abstract}

\maketitle

\section{Introduction}
This paper, following \cite{Chen-Hassell1}, is the second in a series of three devoted to the analysis of the resolvent family and spectral measure for the Laplacian on an asymptotically hyperbolic, nontrapping manifold. The third paper, by the first author alone, will establish global-in-time Strichartz estimates on such a manifold.

Let $(X^\circ, g)$ be an asymptotically hyperbolic manifold of
dimension $n+1$ (see Section~\ref{subsec:ahm} for the precise
definition of `asymptotically hyperbolic'). Let $\Delta$ be the
positive Laplacian on $(X^\circ, g)$, which is essentially self-adjoint on
$C_c^\infty(X^\circ)$. It is well known that the spectrum of
$\Delta$ is absolutely continuous on $[n^2/4, \infty)$ \cite{Mazzeo-1991} with possibly
finitely many eigenvalues (of finite multiplicity) in $(0, n^2/4)$.
We write $\sqrtdelta$ for the operator 
\begin{equation}
\sqrtdelta = 
(\Delta - n^2/4)_+^{1/2},
\label{P}\end{equation}
where the subscript $+$ indicates positive part --- thus, $\sqrtdelta$ vanishes on the pure point eigenspaces. In this paper, we
analyze the spectral measure $\spmeas$ of the operator $\sqrtdelta$,
under the assumption that $(X^\circ, g)$ is nontrapping (that is,
every geodesic reaches infinity both forward and backward) and that
there is no resonance at the bottom of the continuous spectrum,
$n^2/4$.  To do this, we express the spectral measure $\spmeas$ in
terms of the boundary values of the resolvent $(\Delta - n^2/4 -
(\blambda \pm i0)^2)^{-1}$ just `above' and `below' the spectrum in
$\CC$. We then use the construction of the resolvent given by Mazzeo
and Melrose \cite{Mazzeo-Melrose} (valid when the spectral parameter
lies in a compact set),  Melrose, S\'{a} Barreto and Vasy
\cite{Melrose-Sa Barreto-Vasy} (high energy estimates for a
perturbation of the hyperbolic metric) and the present authors
\cite{Chen-Hassell1} (and, independently,  \cite{Wang}) in the general
high-energy case to get precise information about the Schwartz
kernel of the spectral measure. In particular, following the work of
the second author with Guillarmou and Sikora
\cite{Guillarmou-Hassell-Sikora} in the asymptotically conic
setting, this will allow us to obtain precise pointwise bounds on
the Schwartz kernel, when (micro)localized near the diagonal in a
certain sense.

We then apply these pointwise kernel bounds to prove operator norm estimates on the spectral measure $\spmeas$, and on general functions $F(\sqrtdelta)$ of the operator $\sqrtdelta$, again following the general strategy of \cite{Guillarmou-Hassell-Sikora}.
However, there are key differences in the results we prove here compared to the asymptotically conic case, which can be traced to the exponential, as opposed to polynomial, growth of the volume of large balls in the present setting.
In the case of the restriction theorem, that is, an $L^p \to L^{p'}$ bound on the spectral measure, we prove more: we obtain an estimate for all $p \in [1, 2)$, while in the asymptotically conic case, it is well known that such an estimate fails for $p \geq 2(d+1)/(d+3)$, where $d$ is the dimension. In the case of the spectral multiplier result, that is, boundedness of $F(\sqrtdelta)$, where we assume only a finite amount of Sobolev regularity on $F$, boundedness on $L^p(X)$ spaces fails for $p \neq 2$ due to results of Clerc-Stein  \cite{C-S} and Taylor \cite{T1}. Instead, we obtain boundedness on $L^p(X) + L^2(X)$ for $p \in [1, 2)$, {provided $X$ is negatively curved}.

\subsection{The spectral measure}

Consider functions of an abstract (unbounded) self-adjoint operator
$L$ on a Hilbert space $H$. These are defined by the spectral
theorem for unbounded self-adjoint operators (for example, see
\cite[p.263]{R-S}). One standard version of this theorem  says that
there is a one-to-one correspondence between self-adjoint operators
$L$ and increasing, right-continuous families of projections
$E(\blambda)$, $\blambda \in \RR$, having the property that the
strong limit of $E(\blambda)$ as $\blambda \to -\infty$ is the zero
operator and as $\blambda \to +\infty$ is the identity. The
correspondence is given by
$$L = \int_{-\infty}^\infty \blambda \, d E(\blambda);$$ if $g(\cdot)$ is a real-valued Borel function on $\mathbb{R}$, then $$g(L) = \int_{-\infty}^\infty g(\blambda) \, dE(\blambda)  $$ with domain
$$\{\psi : \int_{-\infty}^\infty |g(\blambda)|^2 \, d \langle\psi, E(\blambda) \psi\rangle < \infty\}$$
is self-adjoint. Here the formula means $$\langle g(L)\psi, \psi
\rangle = \int_{-\infty}^\infty g(\blambda) \, d \langle E(\blambda)
\psi, \psi\rangle,$$
which can be interpreted as a Stieltjes integral since $\langle E(\blambda)
\psi, \psi\rangle$ is a nondecreasing function of $\blambda$. 
 We call $dE(\blambda)$ the spectral measure
associated with the operator $L$.

In particular we can apply this when $L = P$ and $H = L^2(X, g)$. We
then write $\spmeas$ for the spectral measure of $P$. Since $P$ is a
positive operator, we only need to integrate over $\blambda \in [0,
\infty)$ in this case.

Returning to the abstract operator $L$, the resolvent family $(L -
\blambda)^{-1}$ is a holomorphic family of bounded operators on $H$
for $\Im \blambda \neq 0$. In many cases, including in the present
setting, the resolvent family extends continuously to the real axis
as a bounded operator in a weaker sense, e.g. between weighted $L^2$
spaces, and is then differentiable in $\blambda$ up to the real
axis. In that case, we find that $E(\blambda)$ is differentiable in
$\blambda$ and we have  Stone's formula
\begin{equation}
\frac{d}{d\blambda} E(\blambda) = \frac1{2\pi i} \Big( (L -
(\blambda + i0))^{-1} - (L - (\blambda - i0))^{-1} \Big).
\label{Stone}\end{equation}

In this case we write (abusing notation somewhat) $dE(\blambda)$ for
the derivative of $E(\blambda)$ with respect to $\blambda$. Stone's
formula gives a mechanism for analyzing the spectral measure, namely
we need to analyze the limit of the resolvent $(L - \blambda)^{-1}$ on the real axis. In
the case of $P$, we notice that the spectral measure $\spmeas$ for
$P$ is $2\blambda$ times the spectral measure at $n^2/4 +
\blambda^2$ for $\Delta$. This gives us the formula
\begin{equation}
 \spmeas = \frac{\blambda}{\pi i} \Big( (\Delta - (n^2/4 + \blambda^2 + i0))^{-1} - (\Delta - (n^2/4 + \blambda^2 - i0))^{-1} \Big).
\label{Stone2}\end{equation}

\subsection{Restriction theorem via spectral measure}

Stein \cite{Beijing lecture} and Tomas \cite{Tomas} proved estimates for the restriction of the Fourier transform of an $L^p$ function to the sphere $\mathbb{S}^{d-1} \subset \RR^d$: 
$$\int_{\mathbb{S}^{d - 1}} |\hat{f}|^2 \, d\sigma \leq C \|f\|_{L^p(\mathbb{R}^d)}^2, \quad
p \in [1, 2(d + 1)/(d + 3)].
$$ 
 Alternatively, we may formulate the estimate in terms of the the restriction operator $R$ to the hypersphere,  $$R(f)(\xi) = \int_{\mathbb{R}^d} e^{- i x \cdot \xi} f(x) \, dx, \quad |\xi| = 1.$$ 
 The Stein-Tomas theorem is equivalent to the boundedness of $R : L^p(\mathbb{R}^d) \longrightarrow L^{2}(\mathbb{S}^{d-1})$, which in turn is equivalent to the boundedness of 
 $R^\ast R : L^p(\mathbb{R}^d) \longrightarrow L^{p^\prime}(\mathbb{R}^d).$ The Schwartz kernel of $R^\ast R$, $$ \int_{|\xi| = 1} e^{i (x - y) \cdot \xi} \, d\xi,$$ is $(2\pi)^{d}$ times the spectral measure $dE_{\sqrt{\Delta}}(1)$ for the square root of the flat Laplacian on $\RR^d$, since the spectral projection $E_{\sqrt{\Delta}}(\blambda)$ of $\sqrt{\Delta}$ is $\mathcal{F}^{-1}(\chi_{B(0, \blambda)}) \mathcal{F}$. Therefore, one may rewrite the restriction theorem as the following estimate:  \begin{equation}\label{Euclidean restriction}  \|dE_{\sqrt{\Delta}}(\blambda)\|_{L^p \rightarrow L^{p^\prime}} \ ( \ = \blambda^{d(2/p - 1) - 1} \|dE_{\sqrt{\Delta}}(1)\|_{L^p \rightarrow L^{p^\prime}} \ )  \ \leq C \blambda^{d(2/p - 1) - 1},\end{equation} provided $p \in [1, 2(d + 1)/(d + 3)].$
 This naturally leads to the question: for which Riemannian manifolds $(N, g)$ does the spectral measure for $\sqrt{\Delta_{N,g}}$ map $L^p(N,g)$ to $L^{p'}(N,g)$ for some $p \in [1, 2)$, and how does the norm depend in the spectral parameter?  We refer to such an estimate as a `restriction estimate' or a `restriction theorem'. Such a result is a continuous spectral analogue of the well-known discrete restriction theorem of Sogge \cite[Chapter 5]{sogge}.

\subsection{Results on asymptotically conic spaces}
As the present paper is inspired by work by the second author with Guillarmou and Sikora \cite{Guillarmou-Hassell-Sikora} on asymptotically conic spaces, we review the results of \cite{Guillarmou-Hassell-Sikora} here.

Asymptotically conic spaces $M$, of dimension $m$, are modelled on spaces that at infinity look like the `large end of a cone'; that is, have one end diffeomorphic to $(r_0, \infty) \times Y$, where $Y$ is a closed manifold of dimension $m - 1$, with
 a metric of the form
$$
dr^2 + r^2 g_0(y, dy) + O(\frac1{r}), r \to \infty,
$$
where $g_0$ a metric on $Y$. Such spaces are Euclidean-like at infinity, in the sense that the volume of balls of radius $\rho$ are uniformly bounded above and below by multiples of $\rho^m$, and in the sense that the curvature tends to zero, and the local injectivity radius tends to infinity, at infinity. If we add the condition that the manifold be nontrapping, then such spaces are also dynamically similar to Euclidean space (although they may have conjugate points).
Consequently, the spectral analysis of such spaces behaves in many ways like Euclidean space. This is illustrated by the results from \cite{Guillarmou-Hassell-Sikora}. On $\RR^m$, the spectral measure satisfies pointwise kernel bounds of the form
\begin{eqnarray}
\bigg|\bigg(\frac{d}{d \blambda}\bigg)^{j}
dE_{\sqrt{\Delta}}(\blambda)(x, y) \bigg| &\leq& C \blambda^{m-1 -
j}(1 + \blambda |x-y|)^{- (m-1)/2 + j}, \quad j = 0, 1, 2, \dots ,
\label{spmeasRd}\end{eqnarray} and this estimate is essentially
optimal, in the sense that neither exponent can be improved. In
\cite{Guillarmou-Hassell-Sikora} it was shown that, if $M$ is an
asymptotically conic nontrapping manifold, and $\Delta$ its
Laplacian, then there is a partition of unity $\Id = \sum_{j=0}^N
Q_i(\blambda)$, depending on $\blambda$, and $\delta > 0$ such that
\begin{equation} \bigg| Q_i(\bsigma) \bigg( \Big( \frac{d}{d \blambda} \Big)^{j}dE_{\sqrt{\Delta}}(\blambda) \bigg) Q_i^*(\bsigma) (x, y)\bigg| \leq C \blambda^{m - 1 - j}(1 + \blambda d(x, y))^{- (m-1)/2 + j}, \quad j = 0, 1, 2, \dots , \label{as-conic-spe}\end{equation}
for $\bsigma \in [(1-\delta) \blambda, (1+\delta) \blambda]$, where $d(x, y)$ is the Riemannian distance\footnote{This was only claimed for $\blambda = \bsigma$ in \cite{Guillarmou-Hassell-Sikora}, but in \cite{Guillarmou-Hassell-2014} it was observed that the same construction gives the more general estimates in \eqref{as-conic-spe}.}. The $Q_i(\blambda)$ are
semiclassical pseudodifferential operators (with semiclassical
parameter $ h = \blambda^{-1})$ with small microsupport. Therefore,
the operators $Q_i(\bsigma) dE_{\sqrt{\Delta}}(\blambda) Q_i^*(\bsigma)$ can be
considered to be the kernel of the spectral measure (micro)localized
near the diagonal. Moreover, in the case where there are no
conjugate points, then the estimate above is valid without the
partition of unity.

This estimate \eqref{as-conic-spe} was shown to imply a global
restriction estimate, that is, an $L^p(M) \to L^{p'}(M)$ operator
norm bound on $dE_{\sqrt{\Delta}}(\blambda)$. In fact, this was
proved at an abstract level:

\begin{theorem}[\cite{Guillarmou-Hassell-Sikora, restriction notes}]\footnote{This theorem was formulated and partially proved in \cite{Guillarmou-Hassell-Sikora}. See \cite{restriction notes} for a complete proof. }\label{kernel2restriction}
Let $(X, d, \mu)$ be a metric measure space, and $L$ an abstract
positive self-adjoint operator on $L^2(X, \mu)$. Suppose that the
spectral measure $dE_{\sqrt{L}}(\blambda)$ has a Schwartz kernel
satisfying \eqref{spmeasRd} (with $|x-y|$ replaced by $d(x, y)$) for
$j=0$, as well as for $j=d/2-1$ and $j=m/2$ if $m$ is even, or
$j=m/2-3/2$ and $j=m/2+1/2$ if $m$ is odd. Then the operator norm
estimate
\begin{equation}
\big\| dE_{\sqrt{\Delta}}(\blambda) \big\|_{L^p(M) \to L^{p'}(M)}
\leq C \blambda^{m(1/p - 1/p') - 1}, \quad 1 \leq p \leq
\frac{2(m+1)}{m+3}, \label{spmeasX}\end{equation} holds for all
$\blambda > 0$. Moreover, if the kernel estimates above hold for
some range of $\blambda$, then \eqref{spmeasX} holds for $\blambda$
in the same range.

\end{theorem}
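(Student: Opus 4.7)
The plan is to follow the Stein--Tomas paradigm via Stein's complex interpolation, lifted to our abstract setting using the spectral theorem. Fix $\blambda > 0$ and introduce the analytic family of Bochner--Riesz type operators
$$
T_z := \frac{1}{\Gamma(z+1)}\,(\blambda^2 - L)_+^z \;=\; \frac{1}{\Gamma(z+1)} \int_0^\blambda (\blambda^2 - \sigma^2)^z \, dE_{\sqrt{L}}(\sigma),
$$
defined first for $\Re z > -1$ by the spectral theorem and then extended by analytic continuation. The normalization is chosen so that, in the distributional sense, $(\blambda^2-\sigma^2)_+^z/\Gamma(z+1) \to \delta(\blambda^2-\sigma^2) = \delta(\sigma-\blambda)/(2\blambda)$ as $z \to -1$, whence $T_{-1} = dE_{\sqrt{L}}(\blambda)/(2\blambda)$. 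The theorem therefore reduces to showing $\|T_{-1}\|_{L^p \to L^{p'}} \leq C\blambda^{m(1/p - 1/p') - 2}$ at the Stein--Tomas endpoint $p = 2(m+1)/(m+3)$.

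The next step is to establish the two endpoint estimates required by Stein interpolation. On the line $\Re z = 0$, the multiplier $s \mapsto (\blambda^2-s)_+^{i\tau}/\Gamma(1+i\tau)$ has absolute value at most $Ce^{\pi|\tau|/2}$ for all $s \geq 0$, so the spectral theorem yields $\|T_{i\tau}\|_{L^2 \to L^2} \leq Ce^{\pi|\tau|/2}$ with no dependence on $\blambda$. On the line $\Re z = -(m+1)/2$, I would show that the Schwartz kernel of $T_z$ is pointwise bounded by $Ce^{c|\Im z|}\blambda^{-1}$, giving $\|T_z\|_{L^1 \to L^\infty} \leq Ce^{c|\Im z|}\blambda^{-1}$. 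Since the integrand is not integrable near $\sigma = \blambda$ for such $z$, this demands repeated integration by parts using $(\blambda^2 - \sigma^2)^z = -\frac{1}{2\sigma(z+1)} \frac{d}{d\sigma}(\blambda^2 - \sigma^2)^{z+1}$. Each step trades a factor $(z+k)^{-1}$ for one derivative $\frac{d}{d\sigma}\bigl[dE_{\sqrt{L}}(\sigma)/\sigma^k\bigr]$, while the boundary contribution at $\sigma = \blambda$ vanishes provided $\Re(z+k) > -1$. After roughly $m/2$ iterations the remaining integral converges absolutely and can be estimated directly using the hypothesized pointwise bounds on $(d/d\sigma)^j dE_{\sqrt{L}}(\sigma)$ at the two consecutive orders $j$ listed in the theorem; the even/odd split reflects whether $(m+1)/2$ is an integer or a half--integer.

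Applying Stein's complex interpolation to $T_z$ on the strip $-(m+1)/2 \leq \Re z \leq 0$ yields, at the interior point $z=-1$ (which sits at relative position $\theta = 2/(m+1)$ from the $L^2$ endpoint), the bound $\|T_{-1}\|_{L^p\to L^{p'}} \leq C\blambda^{-2/(m+1)}$ at $p=2(m+1)/(m+3)$; multiplying by $2\blambda$ to pass to $dE_{\sqrt{L}}(\blambda)$ produces the claimed exponent $(m-1)/(m+1) = m(1/p-1/p')-1$. For smaller $p\in[1, 2(m+1)/(m+3))$ the estimate follows by Riesz--Thorin interpolation with the trivial $L^1\to L^\infty$ bound $\|dE_{\sqrt{L}}(\blambda)\|_{L^1\to L^\infty} \leq C\blambda^{m-1}$, immediate from the $j=0$ hypothesis. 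The whole argument is local in $\blambda$, so the addendum about restricted ranges of $\blambda$ follows verbatim.

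The principal obstacle I anticipate is the rigorous execution of the $L^1\to L^\infty$ endpoint. This entails justifying the analytic continuation of $T_z$ past $\Re z=-1$ using only the distributional pointwise information supplied by the hypothesis; arranging the iterated integration by parts so that every boundary contribution at $\sigma=\blambda$ vanishes in the required range of $z$; and confirming that precisely the two derivative orders in the hypothesis suffice to estimate the final absolutely convergent integral, intermediate orders being recoverable by interpolating the pointwise bound in the derivative index.
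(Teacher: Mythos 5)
Your proposal follows the same Stein--Tomas complex-interpolation strategy as the cited proof: the paper itself only quotes this theorem from \cite{Guillarmou-Hassell-Sikora, restriction notes}, but it runs the identical argument in Section~\ref{sec:restriction} with the analytic family $\phi(P/\blambda)\chi_+^a(\blambda-P)$, $\chi_+^a = x_+^a/\Gamma(a+1)$, which differs from your $(\blambda^2-L)_+^z/\Gamma(z+1)$ only by the harmless smooth factor $(\blambda+\sigma)^z$ near $\sigma=\blambda$. Your endpoint exponents, the position $\theta=2/(m+1)$ of $z=-1$ in the strip, and the final Riesz--Thorin step down to $p=1$ are all correct. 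Two cautions about the part you leave as a sketch. First, the boundary terms at $\sigma=\blambda$ do \emph{not} literally vanish once $\Re z<-1$, so the iterated integration by parts must be organized as an analytic continuation starting from $\Re z>-1$; the clean mechanism is the convolution identity $\chi_+^\mu \ast \chi_+^\nu = \chi_+^{\mu+\nu+1}$, which lets one write the operator as $\chi_+^{k+z}\ast(d/d\blambda)^{k}dE_{\sqrt{L}}(\blambda)$ with no boundary terms to track. Second, the reason the hypothesis lists exactly \emph{two} derivative orders is a logarithmic-convexity lemma for $\|\chi_+^{b+is}\ast f\|_\infty$ in terms of two integer orders bracketing $b$ (\cite[Lemma 3.3]{Guillarmou-Hassell-Sikora}, reproduced as Lemma~\ref{infty} in this paper); it is this lemma, rather than a direct ``interpolation of the pointwise bound in the derivative index,'' that reaches the non-integer order $-(m+1)/2$, and proving it is the substance of the step you flag as the principal obstacle.
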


Finally, it was shown in \cite{Guillarmou-Hassell-Sikora} that, at an abstract level, such a restriction estimate implies spectral multiplier estimates:

\begin{theorem}[\cite{Guillarmou-Hassell-Sikora}]\label{restriction2multiplier} Let $(X, d, \mu)$ be a metric measure space,  such that the volume of each ball of radius $\rho$ is comparable to $\rho^m$.
Suppose $\Delta$ is a positive self-adjoint operator such that $\cos
t \sqrt{\Delta}$ satisfies finite propagation speed on $L^2(X)$, and
the restriction theorem $$\|dE_{\sqrt{\Delta}}(\blambda)\|_{L^p
\rightarrow L^{p^\prime}} \leq C \blambda^{m(1/p - 1/p^\prime) -
1}$$ holds uniformly with respect to $\blambda > 0$ for $1 \leq p
\leq 2(m+1)/(m+3)$. Then  there is a uniform operator norm bound on
spectral multipliers on $L^p(X)$ of the form
\begin{equation}
\sup_{\alpha > 0} \|F(\alpha \sqrt{\Delta})\|_{L^p \rightarrow L^p} \leq C \|F\|_{H^s},
\label{sm}\end{equation} where $F \in H^s(\mathbb{R})$ is an even function supported in $[-1, 1]$, and $s > m (1/p - 1/2 )$.\end{theorem}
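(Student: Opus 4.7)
The plan is to adapt the Stein--Tomas style spectral multiplier argument: a dyadic Littlewood--Paley decomposition of $F$ on the Fourier (time) side, finite propagation speed to localise each piece spatially, the restriction hypothesis via $TT^*$ to obtain an $L^p\to L^2$ bound, and a covering argument exploiting polynomial volume growth to upgrade to $L^p\to L^p$. A preliminary scaling reduction removes the supremum over $\alpha$: passing to $\tilde\Delta=\alpha^2\Delta$ on $(X,d/\alpha,\alpha^{-m}\mu)$ preserves the volume condition, finite propagation speed, and restriction estimate with unchanged constants (the relevant powers of $\alpha$ cancel), and the $L^p\to L^p$ operator norm is invariant under constant rescalings of the measure. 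So it suffices to prove $\|F(\sqrt\Delta)\|_{L^p\to L^p}\le C\|F\|_{H^s}$ for even $F$ supported in $[-1,1]$ with $s>m(1/p-1/2)$. I would then choose a smooth dyadic partition $1=\sum_{k\ge 0}\phi_k(t)$ on $\RR$ with $\phi_0$ supported in $|t|\le 2$ and $\phi_k$ in $\{2^{k-1}\le|t|\le 2^{k+1}\}$ for $k\ge 1$, and define $F_k$ by $\hat F_k=\phi_k\hat F$. Plancherel gives $\|F_k\|_{L^2}\le C\,2^{-ks}\|F\|_{H^s}$, and writing $F_k(\sqrt\Delta)=\frac{1}{2\pi}\int\phi_k(t)\hat F(t)\cos(t\sqrt\Delta)\,dt$, the finite propagation speed hypothesis forces the Schwartz kernel of $F_k(\sqrt\Delta)$ to be supported in $\{d(x,y)\le 2^{k+1}\}$.

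The core estimate to establish is $\|F_k(\sqrt\Delta)\|_{L^p\to L^p}\le C\,2^{km(1/p-1/2)}\|F_k\|_{L^2}$, and I would prove it in two steps. First, a $TT^*$ argument with $T=F_k(\sqrt\Delta)$, so $TT^*=|F_k|^2(\sqrt\Delta)$, combined with the restriction hypothesis, gives
\[
\|F_k(\sqrt\Delta)\|_{L^p\to L^2}^2\;\le\; C\int_0^\infty |F_k(\lambda)|^2 \lambda^{m(1/p-1/p')-1}\,d\lambda\;\le\; C\|F_k\|_{L^2}^2,
\]
the second inequality exploiting that $F_k=F*\check\phi_k$ is Schwartz-concentrated on $[-2,2]$ (inherited from the compact support of $F$ and the scaling of $\check\phi_k$). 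Second, the spatial kernel support in $\{d\le 2^{k+1}\}$ is leveraged via a bounded-overlap cover $\{B_i\}$ of $X$ by balls of radius $2^{k+1}$ (afforded by volume doubling) with subordinate partition of unity $\{\chi_i\}$: for $f_i=\chi_i f$, the image $F_k(\sqrt\Delta)f_i$ is supported in $2B_i$, and H\"older's inequality with $V(2B_i)\lesssim 2^{km}$ produces the factor $2^{km(1/p-1/2)}$ in passing from $L^2$ to $L^p$ on each ball. Assembling with the bounded overlap yields the core estimate.

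Summing the core estimate against $\|F_k\|_{L^2}\le C\,2^{-ks}\|F\|_{H^s}$ then gives $\|F(\sqrt\Delta)\|_{L^p\to L^p}\le C\|F\|_{H^s}\sum_k 2^{k(m(1/p-1/2)-s)}$, a convergent geometric series for $s>m(1/p-1/2)$. I expect the main obstacle to be in the first half of the core estimate: for $p$ near the Stein--Tomas endpoint the spectral weight $\lambda^{m(1/p-1/p')-1}$ grows, and since $F_k$ is only Schwartz-concentrated near $[-2,2]$ rather than compactly supported, one must verify carefully that the contribution from $\lambda>2$ decays in $k$ faster than any polynomial and is dominated by the main term, so that the resulting constant in the $L^p\to L^2$ bound is uniform in $k$.
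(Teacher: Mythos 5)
Your proposal is correct and follows essentially the same route as the paper's argument (which is reproduced in Section 7, following \cite{Guillarmou-Hassell-Sikora}): dyadic decomposition of $\hat F$ in the time variable, finite propagation speed to localize each kernel, a $TT^*$ argument plus the restriction hypothesis for the $L^p \to L^2$ bound, and a bounded-overlap covering argument (the content of \cite[Lemma 2.7]{Guillarmou-Hassell-Sikora}) to return to $L^p \to L^p$. The tail issue you flag at the end is handled in the paper exactly as you anticipate: one splits off $(1-\psi)F_l$ with a cutoff $\psi$ equal to $1$ on the support of $F$ and integrates by parts in $t$ to obtain decay $O(\blambda^{-N} 2^{-Nl})$, which is summable against the growing spectral weight.
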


In particular, one concludes \eqref{spmeasX} and \eqref{sm} when $X$ is an asymptotically conic nontrapping manifold of dimension $d$.

\subsection{Hyperbolic space}
We next consider existing results on hyperbolic space.
We return to our convention where the dimension is $n+1$.
Using explicit formulae for the Schwartz kernel of functions of the operator $P = (\Delta - n^2/4)^{1/2}$, we deduce pointwise bounds
\begin{equation}
\bigg|dE_{P}(\blambda)(z,z') \bigg| \leq \begin{cases} C \blambda^{2}, \quad d(z,z') \leq 1 \\
C \blambda^{2} d(z,z') (1 +\blambda d(z,z'))^{-1} 
e^{-nd(z,z')/2}, \ d(z,z') \geq 1 \end{cases} 
\end{equation}
for $\blambda \leq 1$, and derivative estimates\footnote{We can obtain derivative estimates for  $\blambda \leq 1$
also, but we do not need such estimates in the low energy case.}
\begin{eqnarray}
\big|\big(\frac{d}{d \blambda}\big)^{j} dE_{P}(\blambda)(z,z') \big|
&\leq& 
\begin{cases}
C \blambda^{n-j} (1 + d(z,z') \blambda)^{-n/2+j}, \text{ for } d(z,z') \leq 1 \\
C \blambda^{n/2} d(z,z')^{j} e^{-nd(z,z')/2}, \text{ for } d(z,z')
\geq 1.
\end{cases}
\end{eqnarray}
when $\blambda \geq 1$. Closely related pointwise bounds for the
wave kernels $\cos tP$ and $P^{-1}\sin tP$, the heat kernel
$e^{-tP^2}$ and the Schr\"odinger propagator $e^{itP^2}$ on
hyperbolic space have been exploited in various works; see for
example \cite{Bunke-Olbrich}, \cite{Davies-Mandouvalos},
\cite{Anker-Pierfelice}, \cite{Burq-Guillarmou-Hassell}. 

To the authors' knowledge, the recent paper \cite{Huang-Sogge} by Huang and Sogge is the 
only previous paper in which restriction estimates for hyperbolic
space have been considered. Huang and Sogge proved restriction estimates
for $p$ in the same range $[1, 2(d+1)/(d+3)]$ as for Euclidean space,
using the exact expression for the hyperbolic resolvent, and complex interpolation,
in the manner of Stein's original proof of the Stein-Tomas
restriction theorem \cite{Tomas} (this argument was presented in an
abstract formulation in \cite{Guillarmou-Hassell-Sikora}). 
In fact, on hyperbolic space (and, as we shall show, asymptotically
hyperbolic nontrapping spaces), restriction estimates are valid
for all $p \in [1,2)$ --- see Section~\ref{sec:model} for a very simple proof on $\HH^3$.


Spectral multiplier estimates on hyperbolic and asymptotically hyperbolic spaces on $L^p$ spaces (much more general than those considered here) have been well studied. It was pointed out by Clerc and Stein \cite{C-S} for symmetric spaces and Taylor \cite{T1} for spaces with exponential volume growth and $C^\infty$ bounded geometry that a \emph{necessary} condition for $F(P)$ to be bounded is that $F$ admit an analytic continuation to a strip in the complex plane. Cheeger, Gromov and Taylor \cite{C-G-T}, and Taylor \cite{T1} showed
that  if $M$ has $C^\infty$ bounded geometry and injectivity radius bounded from below, then $F(\sqrt{\Delta})$ maps $L^p(M)$ into itself for $1 < p < \infty$, provided that $F$ is holomorphic and even on the strip $\{z\in\mathbb{C} : |\mathrm{Im} z| < W\}$ for some $W$ and satisfies symbol estimates
$$|F^{(j)}(z)| \leq C_j \langle z \rangle^{k - j}$$
on the strip.

By constrast, we want to consider the mapping properties of $F(P)$ where $F$ has only  finite Sobolev regularity. This is motivated by typical applications of spectral multipliers in harmonic analysis, such as Riesz means, and in PDE, in which one often wants to restrict to a dyadic frequency interval, that is, to the range of a spectral projector of the form $1_{[2^j, 2^{j+1}]}(P)$, or a smoothed version of this. Clearly, such a spectral multiplier cannot have an analytic continuation to a strip. On the other hand, the work of Clerc-Stein and Taylor shows that boundedness on $L^p$, $p \neq 2$, cannot be expected. This motivates us to search for replacements for $L^p$ spaces, on which spectral multipliers are bounded.

\subsection{Asymptotically hyperbolic manifolds}\label{subsec:ahm}
The geometric setting in the present paper is that of asymptotically hyperbolic manifolds. An asymptotically hyperbolic manifold $(X^\circ, g)$ is the interior of a compact manifold $X$ with boundary, such that the Riemannian metric $g$ takes a specific degenerate form near the boundary of $X$. Specifically, near each boundary point, there are local coordinates $(x, y)$, where $x$ is a boundary defining function and $y$ restrict to local coordinates on $\partial X$, such that $g$ takes the form
\begin{equation}
g = \frac{d x^2 + g_0(x, y, dy)}{x^2}.
\label{metric}\end{equation}
where $g_0(x, y, dy)$ is a family of metrics on $\partial X$, smoothly parametrized by $x$. Under the metric $g$, the interior $X^\circ$ of $X$ is a complete Riemannian manifold.

As is well known, $n+1$-dimensional hyperbolic space takes this form in the Poincar\'e ball model. Indeed, $\HH^{n+1}$ is given by the interior of the unit ball in $\RR^{n+1}$, with the metric
\begin{equation}
g = \frac{4 dz^2}{(1 - |z|^2)^2},
\label{Poincare}\end{equation}
where $z = (z_1, \dots, z_{n+1})$ are the standard coordinates on $\RR^{n+1}$. Other examples include all convex co-compact hyperbolic manifolds, and compactly supported metric perturbations of these.

Such spaces are termed 
asymptotically hyperbolic spaces as the sectional curvatures tend to $-1$ at infinity \cite{Mazzeo-Melrose}. 
Analytically, they have many similarities to hyperbolic spaces. Consider the resolvent
$R(\zeta) := (\Delta - \bzeta(n - \bzeta))^{-1}$ on $\HH^{n+1}$, which
is well-defined as a bounded operator on $L^2(\HH^{n+1})$ for $\Re
\bzeta > n/2$. Notice that the axis $\Re \bzeta = n/2$ corresponds to
the spectrum of $\Delta$, and the point $\bzeta = n/2 \pm i\blambda$
corresponds to the point $|\blambda|$ in the spectrum of $P =
(\Delta - n^2/4)_+^{1/2}$.
On $\HH^{n+1}$, the resolvent $R(\bzeta)$ extends to a holomorphic function of $\bzeta \in \CC$ when $n$ is even, and a meromorphic function with poles at $\{ 0, -1, -2, \dots \}$ when $n$ is odd.

 For asymptotically hyperbolic spaces, it is known from work of Mazzeo-Melrose \cite{Mazzeo-Melrose} and Guillarmou \cite{Guillarmou} that the resolvent $(\Delta - \bzeta(n - \bzeta))^{-1}$ extends to be a meromorphic function of $\bzeta$ on $\CC \setminus \{ (n-1)/2 - k \mid k = 1, 2, 3, \dots \}$, and extends to be meromorphic on the whole of $\CC$ provided that $g$ is even in $x$, that is, a smooth function of $x^2$. In addition, it is holomorphic in a neighbourhood of the spectral axis $\Re \bzeta = n/2$ except possibly at the point $n/2$ itself, corresponding to the bottom of the continuous spectrum, which could be a simple pole \cite{Bouclet}. In the present article, we shall \emph{assume} that the resolvent is holomorphic at $\bzeta = n/2$ as well. We point out that our estimates will certainly fail in the case of a zero-resonance, but weaker estimates will remain valid; see \cite{Jensen-Kato}, \cite{Guillarmou-Hassell} for an analysis of zero-resonances in the asymptotically Euclidean case.


\subsection{Main results}\label{subsec:mr}

\subsubsection{Pointwise estimates on the spectral measure}

Our first main result, analogous to \eqref{as-conic-spe},  is that there is a partition of the identity,
$\Id = \sum_{j=0}^N Q_i(\blambda)$ on $L^2(X)$ such that the
diagonal terms in the two-sided decomposition of $\spmeas$ satisfy the same
type of pointwise bounds as are valid on hyperbolic space. In fact,
following \cite{Hassell-Zhang}, we prove a slightly stronger result,
in which we retain information about the oscillatory nature of the
kernel as $\blambda \to \infty$.

Before stating the result, we refer to Section~\ref{sec:ahm} for the definition of the double space $X^2_0$, the blow-up of $X^2$ at the boundary of the diagonal; see Figure~\ref{fig}. This space has 3 boundary hypersurfaces: the lift to $X^2_0$ of the left and right boundaries in $X^2$, denoted $\FL$ and $\FR$, respectively, and the `front face' $\FF$ created by blowup. We denote boundary defining functions for these boundary hypersurfaces by $\rho_L$, $\rho_R$ and $\rho_F$ respectively. 

\begin{theorem}\label{thm:kernelbounds1}
Let $(X^\circ, g)$ be an asymptotically hyperbolic nontrapping
manifold with no zero-resonance, and let $\sqrtdelta$ be given by \eqref{P}. Then for low energies, $\blambda
\leq 1$, the Schwartz kernel of the spectral measure $dE_{\sqrtdelta}(\blambda)$ takes the form
\begin{equation}
dE_{P}(\blambda) (z,z') = \blambda \Big( (\rho_L
\rho_R)^{n/2+i\blambda} a(\blambda, z, z') - (\rho_L
\rho_R)^{n/2-i\blambda} a(-\blambda, z, z') \Big),
\end{equation}
where $a \in C^\infty([-1, 1]_{\blambda} \times X^2_0)$.

For high energies, $\blambda \geq 1$,
 one can choose a finite pseudodifferential operator partition of the identity operator,  $$Id = \sum_{k = 0}^{N} Q_k(\blambda),$$ such that the $Q_j$ are bounded on $L^p$, uniformly in $\lambda$, for each  $p \in (1, \infty)$, and such that the   microlocalized spectral measure, that is, any of the compositions $Q_k(\blambda) dE_{P}(\blambda) Q_k^\ast (\blambda)$, $0 \leq k \leq N$, takes the form
\begin{gather}
Q_k(\blambda) dE_{P}(\blambda) Q_k^\ast (\blambda)(z,z') = \blambda^n \Big(  \sum_{\pm} e^{\pm i \blambda d(z, z')} b_{\pm}(\blambda, z, z')  \Big) \label{bpm} \\
+  \,  (\rho_L \rho_R)^{n/2+i\blambda} \, a_+ + (\rho_L
\rho_R)^{n/2-i\blambda} \, a_- + (xx')^{n/2+i\blambda} \, \tilde a_+
+ (xx')^{n/2-i\blambda} \, \tilde  a_-  \label{apm}
\end{gather}
where $a_\pm$ is in $\blambda^{-\infty} C^\infty([0,
1]_{\blambda^{-1}} \times X^2_0)$ and $\tilde a_\pm$ is in
$\blambda^{-\infty} C^\infty([0, 1]_{\blambda^{-1}} \times X^2)$,
and
 the functions $b_{\pm}$ satisfy the following.
For small distance, $d(z,z') \leq 1$, we have
\begin{eqnarray} \Big| \frac{d^j}{d\blambda^j} b_{\pm}(\blambda, z, z')  \Big| \leq  C\blambda^{- j} \big(1 + \blambda d(z, z^\prime)\big)^{- n/2 }.
\label{bpm2}\end{eqnarray}

For $d(z,z') \geq 1$,  $b_{\pm}$ is $\blambda^{-n/2}$ times a smooth
function of $\blambda^{-1}$, decaying to order $n/2$ at $\FL$ and
$\FR$:
\begin{equation}
b_{\pm}(\blambda, z, z') \in \blambda^{-n/2} (\rho_L \rho_R)^{n/2}
C^\infty([0,1]_{\blambda^{-1}} \times X^2_0).
\label{bpm2.5}\end{equation} Moreover, if $(X^\circ, g)$ is in
addition simply connected with nonpositive sectional curvatures,
then the estimates above are true for the spectral meaure without
microlocalization, i.e. in this case we can take $\{ Q_i(\blambda)
\} $ to be the trivial partition of unity.
\end{theorem}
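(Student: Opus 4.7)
The starting point is Stone's formula \eqref{Stone2}, which expresses $dE_P(\lambda)$ as $(\lambda/\pi i)$ times the difference of the boundary values $R(\lambda\pm i0) = (\Delta - n^2/4 - (\lambda\pm i0)^2)^{-1}$ of the resolvent just above and below the spectrum. The overall plan is therefore to read off the claimed structure from the resolvent constructions of Mazzeo--Melrose (for $\lambda \le 1$) and Chen--Hassell \cite{Chen-Hassell1} (for $\lambda \ge 1$), taking the Stone difference in each case.

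For the low-energy regime $\lambda \le 1$, I would invoke the Mazzeo--Melrose parametrix, which represents $R(n/2 - i\lambda)$ as a polyhomogeneous distribution on the $0$-double space $X^2_0$ with indicial exponent $n/2 - i\lambda$ at $\FL$ and $\FR$ (the outgoing behaviour), smooth across $\FF$ up to the conormal diagonal singularity. The no-resonance hypothesis at $n^2/4$ ensures the resolvent is holomorphic through $\lambda = 0$, so both boundary values are smooth in $\lambda \in [-1,1]$. Taking the difference $R(\lambda + i0) - R(\lambda - i0)$ cancels the singular part on the diagonal (since the parametrix can be chosen with a \emph{real} diagonal singularity) and leaves $(\rho_L\rho_R)^{n/2+i\lambda} a(\lambda,z,z') - (\rho_L\rho_R)^{n/2 - i\lambda} a(-\lambda,z,z')$ with $a \in C^\infty([-1,1]_\lambda \times X^2_0)$, which, multiplied by $\lambda/\pi i$, is the claimed form.

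For the high-energy regime $\lambda \ge 1$, I would use the semiclassical resolvent construction of \cite{Chen-Hassell1}, in which the rescaled outgoing resolvent (with $h = \lambda^{-1}$) is expressed as an intersecting semiclassical Lagrangian/Legendre distribution on a blown-up $k,b$-double space, with symbolic leading behaviour along the conormal to the diagonal and along the propagating (geodesic flow-out) Lagrangian. The pseudodifferential partition $\{Q_k(\lambda)\}$ is chosen so that each $Q_k$ is a semiclassical pseudodifferential operator of order zero whose microsupport is so small that at most one sheet of the propagating Lagrangian contributes to $Q_k R(\lambda+i0) Q_k^*$; uniform $L^p$-boundedness follows from a Calder\'on--Zygmund estimate on zero-order semiclassical symbols. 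On such a microlocalized piece, the Stone difference produces exactly the oscillatory form $\lambda^n e^{\pm i\lambda d(z,z')} b_\pm(\lambda,z,z')$ in \eqref{bpm}, together with non-oscillatory remainders generated by the boundary expansion of the resolvent at $\FL$, $\FR$, and the corner $\partial X \times \partial X$, which give the four weight terms in \eqref{apm}. The estimates \eqref{bpm2}--\eqref{bpm2.5} on $b_\pm$ then come from two separate analyses: near the diagonal ($d(z,z') \le 1$), $b_\pm$ is a semiclassical symbol conormal to the diagonal of order $-n/2$, so the standard conormal estimate and its $h$-derivatives give the bound $(1+\lambda d(z,z'))^{-n/2}$ with the claimed $\lambda$-derivatives; away from the diagonal ($d(z,z') \ge 1$), $b_\pm$ is $\lambda^{-n/2}$ times a smooth symbol, and the $(\rho_L\rho_R)^{n/2}$ decay at $\FL$, $\FR$ reflects the known exponential decay of the hyperbolic Green function at spatial infinity, encoded in the transport equation along geodesics reaching the boundary in the asymptotically hyperbolic metric.

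For the final statement under simple connectedness and nonpositive sectional curvature, the Cartan--Hadamard theorem furnishes a globally defined exponential map which is a diffeomorphism, ruling out conjugate points; consequently $d(z,z')$ is smooth off the diagonal and the propagating Lagrangian is a single embedded submanifold, so no partition of the identity is needed and the construction of \cite{Chen-Hassell1} yields a single globally defined oscillatory integral. The main obstacle I expect lies in the high-energy analysis: first, in \emph{choosing} $\{Q_k(\lambda)\}$ so that it simultaneously decomposes the propagating Lagrangian and yields uniformly $L^p$-bounded operators; and second, in verifying the uniform smoothness of $b_\pm$ in $\lambda^{-1}$ down to $0$ together with the sharp $(\rho_L\rho_R)^{n/2}$ decay at the full front face, which requires careful bookkeeping through the iterative parametrix scheme of \cite{Chen-Hassell1} to ensure the error terms inherit the correct asymptotic behaviour at both the diagonal and spatial infinity.
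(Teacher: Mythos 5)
Your overall architecture --- Stone's formula, the Mazzeo--Melrose structure \eqref{leres} at low energy, and the semiclassical Lagrangian description of the high-energy resolvent from \cite{Chen-Hassell1} --- is the same as the paper's, and your low-energy and Cartan--Hadamard paragraphs match the actual argument. The genuine gap is in the design and purpose of the partition of unity. You propose to choose $Q_k(\blambda)$ so that ``at most one sheet of the propagating Lagrangian contributes''; this cannot be the criterion, because the two sheets $\Lambda^{nd}_\pm$ meet at $N^*\diagz$ and both necessarily contribute to any microlocalization that sees a neighbourhood of the diagonal --- which is exactly why \eqref{bpm} carries the sum over $\pm$. What the partition must accomplish instead is to kill the \emph{other} component of the bicharacteristic relation, the Lagrangian $\tL^*$ (term (iii) of Theorem~\ref{thm:specmeas}), which lives on $X^2$ rather than $X^2_0$ and corresponds to pairs of points joined by geodesics that travel out near the boundary far from the front face; over $\tL^*$ the rank of the projection to the base cannot be controlled without curvature hypotheses, so no pointwise bound of the form \eqref{bpm2}--\eqref{bpm2.5} can be extracted from it. The substantive step you are missing is Proposition~\ref{prop:Qi}: one chooses the $Q_i$ with microsupport either in thin bands of the fibre variable $\lambda(\lambda^2+h^{ij}\mu_i\mu_j)^{-1/2}$ near the boundary, or of small Sasaki diameter (less than a quarter of the injectivity radius) in the interior, and then proves by a compactness argument --- limits of connecting geodesics converge either to boundary bicharacteristics lying over a single fibre of $\FF$ or to diagonal points --- that $\WF(Q_i \spmeas Q_i^*)\subset\Lambda^{nd}$. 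Without this, the decomposition \eqref{bpm}--\eqref{apm}, in which the $(xx')^{n/2\pm i\blambda}\tilde a_\pm$ terms are only $O(\blambda^{-\infty})$ residuals rather than genuine contributions from $\tL^*$, is not justified.

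A secondary, smaller issue: near the diagonal the bound $(1+\blambda d(z,z'))^{-n/2}$ is not a ``standard conormal estimate.'' The projection from $\Lambda^{nd}$ to $X^2_0$ degenerates to order $n$ at $N^*\diagz$, and the proof requires the tailored parametrization of Proposition~\ref{prop:param} (with $d^2_{vv}\Psi = rA$, $A$ nondegenerate, and $\Psi = \pm d(z,z')$ on the critical set) together with a stationary-phase argument scaled by ${\tilde\blambda}=r/h$ with cutoffs at scale ${\tilde\blambda}^{-1/2}$. Since the paper imports this nearly verbatim from \cite{Hassell-Zhang}, your deferral to known results is defensible in spirit, but you should identify this mechanism rather than appeal to a generic symbol estimate.
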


\begin{remark} We can split the continuous spectrum of $P$ at any point $\blambda \in (0, \infty)$ to differentiate high and low energies.\end{remark}

Using this structure theorem, we prove pointwise bounds on the microlocalized spectral measure:

\begin{theorem}\label{thm:kernelbounds}
Let $(X^\circ, g)$ be as above. Then for low energies, $\blambda
\leq 1$, we have pointwise estimates on the spectral measure of the
form
\begin{equation}
\Big|dE_{P}(\blambda) (z,z') \Big| \leq \begin{cases} C \blambda^{2} , \quad  d(z,z') \leq 1 \\
C \blambda^{2} d(z,z') (1+ \blambda d(z,z'))^{-1} e^{-nd(z,z')/2}, \ d(z,z') \geq 1. \end{cases}
\end{equation}

For high energies, $\blambda \geq 1$, one has, for sufficiently small $\delta > 0$ and $\bsigma \in [(1-\delta)\blambda, (1+\delta)\blambda]$
\begin{equation}\begin{gathered}
\Big|  Q_k(\bsigma)
\bigg( \big( \frac{d}{d\blambda} \big)^j dE_{P}(\blambda) \bigg) Q_k^\ast (\bsigma)(z,z')  \Big| \leq
\begin{cases}
C \blambda^{n-j} (1 + d(z,z') \blambda)^{-n/2+j}, \text{ for } d(z,z') \leq 1 \\
C \blambda^{n/2} d(z,z')^{j} e^{-nd(z,z')/2}, \text{ for } d(z,z')
\geq 1.
\end{cases}
\end{gathered}\label{bpm4}\end{equation}
As before,  if $(X^\circ, g)$ is in addition simply connected with
nonpositive sectional curvatures, then the estimates above are true
for the spectral meaure without microlocalization, i.e. in this case
we can take $\{ Q_i(\blambda) \} $ to be the trivial partition of
unity.
\end{theorem}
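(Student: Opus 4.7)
The strategy is to deduce the pointwise bounds directly from the structural form established in Theorem~\ref{thm:kernelbounds1} by estimating each model term using $\blambda$-Taylor expansion at low energy and Leibniz expansion of the oscillatory symbols at high energy. Two geometric inputs on the $0$-double space $X^2_0$ are needed: the uniform comparability $-\log(\rho_L\rho_R) \asymp d(z,z')$ when $d(z,z') \geq 1$, and the uniform boundedness of $(\rho_L\rho_R)^{n/2}|\log(\rho_L\rho_R)|^N$ on all of $X^2_0$ for every $N$ (which follows from $s^{n/2}|\log s|^N \to 0$ as $s \to 0^+$).

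For $\blambda \leq 1$, write $dE_P(\blambda)(z,z') = \blambda\bigl[G(\blambda) - G(-\blambda)\bigr]$ with $G(\blambda) := (\rho_L\rho_R)^{n/2+i\blambda}\,a(\blambda,z,z')$, which by Theorem~\ref{thm:kernelbounds1} is smooth in $\blambda \in [-1,1]$. Since $G(0) - G(0) = 0$, the bracket is $O(\blambda)$ and $dE_P(\blambda)$ is $O(\blambda^2)$; the implicit constant involves $(\rho_L\rho_R)^{n/2}$ times a polynomial in $\log(\rho_L\rho_R)$, bounded by the logarithmic estimate. For $d(z,z') \leq 1$ this is the asserted $C\blambda^2$ bound. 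For $d(z,z') \geq 1$ I would factor out $(\rho_L\rho_R)^{n/2} \asymp e^{-nd(z,z')/2}$ and set $L := -\log(\rho_L\rho_R)$, rewriting the bracket as $e^{-i\blambda L}a(\blambda) - e^{i\blambda L}a(-\blambda)$. This vanishes at $\blambda=0$ (yielding a factor $\blambda L$ when $\blambda L \leq 1$) while being uniformly bounded when $\blambda L \geq 1$; combined with the overall factor $\blambda$, this gives the bound $\blambda^2 d(z,z')(1+\blambda d(z,z'))^{-1}e^{-nd(z,z')/2}$.

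For $\blambda \geq 1$, the dominant piece is $\blambda^n \sum_\pm e^{\pm i\blambda d(z,z')}b_\pm$. Expanding $\partial_\blambda^j$ via Leibniz,
$$\partial_\blambda^j\bigl[\blambda^n e^{\pm i\blambda d}b_\pm\bigr] = \sum_{k_1+k_2+k_3=j} c_{k_1k_2k_3}\,\blambda^{n-k_1}(\pm id)^{k_2}e^{\pm i\blambda d}\,\partial_\blambda^{k_3}b_\pm.$$
For $d(z,z') \leq 1$, use \eqref{bpm2} to bound $|\partial_\blambda^{k_3}b_\pm| \lesssim \blambda^{-k_3}(1+\blambda d(z,z'))^{-n/2}$; each term is then controlled by $\blambda^{n-j}(\blambda d(z,z'))^{k_2}(1+\blambda d(z,z'))^{-n/2}$, and summing over $k_2 \leq j$ yields $\blambda^{n-j}(1+\blambda d(z,z'))^{-n/2+j}$. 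For $d(z,z') \geq 1$, \eqref{bpm2.5} gives $|\partial_\blambda^{k_3}b_\pm| \lesssim \blambda^{-n/2-k_3}(\rho_L\rho_R)^{n/2} \lesssim \blambda^{-n/2-k_3}e^{-nd(z,z')/2}$, so each term is bounded by $\blambda^{n/2-j}(\blambda d(z,z'))^{k_2}e^{-nd(z,z')/2}$; since $\blambda d(z,z') \geq 1$, this is dominated by $\blambda^{n/2}d(z,z')^j e^{-nd(z,z')/2}$. The remaining model terms in \eqref{apm} carry $\blambda^{-\infty}$ coefficients: $\blambda$-derivatives of $(\rho_L\rho_R)^{\pm i\blambda}$ or $(xx')^{\pm i\blambda}$ introduce at most polynomial factors in $\log(\rho_L\rho_R)$ or $\log(xx')$, absorbed by the prefactors $(\rho_L\rho_R)^{n/2}$, $(xx')^{n/2}$, so these contributions are $O(\blambda^{-\infty})$ and dominated by the main piece.

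I expect two points to require the most care. First, the distance comparison $-\log(\rho_L\rho_R) \asymp d(z,z')$ is a genuinely geometric statement on $X^2_0$ that must be verified uniformly using the asymptotic form \eqref{metric} of $g$; it is standard for hyperbolic space but needs a separate argument in the asymptotically hyperbolic setting. Second, Theorem~\ref{thm:kernelbounds1} describes the kernel of $Q_k(\blambda)\,dE_P(\blambda)\,Q_k^\ast(\blambda)$, whereas Theorem~\ref{thm:kernelbounds} uses $Q_k(\bsigma)$ with $\bsigma \in [(1-\delta)\blambda,(1+\delta)\blambda]$; I would argue that the parametrix construction is stable under $O(\delta)$ variation of the spectral parameter for $\delta$ small, and that the semiclassical microsupports of $Q_k(\bsigma)$ remain in the correct cones, so that the same symbol bounds persist uniformly in the stated range. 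In the simply connected, nonpositively curved case the absence of conjugate points along any geodesic means that a single global parametrix suffices, so no microlocal partition is needed and the trivial choice $Q_0 = \Id$ works.
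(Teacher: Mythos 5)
Your proposal is correct and follows essentially the same route as the paper: the low-energy case is the paper's own argument in Section~\ref{sec:le} (Taylor expansion of $a(\pm\blambda)$ at $\blambda=0$ combined with Proposition~\ref{prop:dist}, which supplies exactly the comparison $-\log(\rho_L\rho_R)=d(z,z')+O(1)$ that you flag as the key geometric input), and the high-energy case is the Leibniz-rule deduction from Theorem~\ref{thm:kernelbounds1} that the paper declares immediate, with the $\bsigma$-versus-$\blambda$ issue handled by the microsupport argument of Proposition~\ref{prop:Qi}. No genuine gaps.
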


\subsubsection{Restriction theorem}

Using Theorem~\ref{thm:kernelbounds}, we prove

\begin{theorem}\label{thm:restriction} Suppose $(X, g)$ is an $n + 1$-dimensional non-trapping asymptotically hyperbolic manifold with no resonance at the bottom of the continuous spectrum. Then we have the following estimate for $\blambda \leq 1$:
\begin{equation}
\|dE_{P} (\blambda)\|_{L^p \rightarrow L^{p^\prime}} \leq C
\blambda^2, \quad 1 \leq p < 2.
\end{equation}

For $\blambda \geq 1$, we have the estimate
\begin{equation}\label{eqn:high energy restriction}
\|dE_{P} (\blambda)\|_{L^p \rightarrow L^{p^\prime}} \leq
\begin{cases}
C \blambda^{(n+1)(1/p - 1/p') - 1}, \quad 1 \leq p \leq \frac{2(n+2)}{n+4}, \\
C \blambda^{n(1/p - 1/2)}, \quad \frac{2(n+2)}{n+4} \leq p < 2.
\end{cases}
\end{equation}

\end{theorem}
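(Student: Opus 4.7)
The plan is to treat three regimes separately, each drawing on the pointwise estimates of Theorem~\ref{thm:kernelbounds}: low energies $\blambda \leq 1$, the high-energy Stein-Tomas range $p \in [1, p_0]$ with $p_0 = 2(n+2)/(n+4)$, and the second high-energy range $p \in (p_0, 2)$, which is new to the asymptotically hyperbolic setting.

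For \emph{low energies}, Theorem~\ref{thm:kernelbounds} gives $|dE_P(\blambda)(z,z')| \leq C\blambda^2 G(z,z')$ with $G$ independent of $\blambda$ and decaying at the critical rate $d(z,z')e^{-nd(z,z')/2}$ against the AH volume growth $e^{nd(z,z')}$. The integral operator with kernel $G$ is bounded from $L^p$ to $L^{p'}$ for all $p \in [1, 2)$, either by comparison with the Green's function of $\Delta - n^2/4$ at the spectral edge (which has exactly this boundedness under the no-zero-resonance hypothesis, through the Mazzeo-Melrose low-energy resolvent parametrix) or by a Kunze-Stein-type convolution estimate on an AH manifold. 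Multiplication by $\blambda^2$ then yields the low-energy claim.

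For the \emph{high-energy first range} $p \in [1, p_0]$, I would apply the abstract Theorem~\ref{kernel2restriction} with ambient dimension $m = n+1$ to each microlocalized block $Q_k(\blambda) dE_P(\blambda) Q_k^*(\blambda)$. Its hypotheses are satisfied by \eqref{bpm4}: the near-diagonal bound $\blambda^{n-j}(1+\blambda d)^{-n/2+j}$ matches the AC-type bound \eqref{spmeasRd}, while the far-diagonal AH bound $\blambda^{n/2} d^j e^{-nd/2}$ is strictly sharper than any polynomial decay. This yields $\|Q_k dE_P(\blambda) Q_k^*\|_{L^p \to L^{p'}} \leq C\blambda^{(n+1)(1/p-1/p')-1}$ for each $k$. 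Reassembling $dE_P(\blambda) = \sum_{j,k} Q_j dE_P(\blambda) Q_k^*$ uses the uniform $L^p$-boundedness of the $Q_k$ from Theorem~\ref{thm:kernelbounds1}, with off-diagonal pieces $j \neq k$ negligible thanks to disjoint semiclassical microsupports, as in the analogous construction of \cite{Guillarmou-Hassell-Sikora}.

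For the \emph{second range} $p \in (p_0, 2)$, the Stein-Tomas exponent $(n+1)(1/p-1/p')-1$ is no longer attained; instead the claimed bound $\blambda^{n(1/p-1/2)}$ arises from linear interpolation in $1/p$ between the Stein-Tomas endpoint at $p=p_0$ (with norm $\blambda^{n/(n+2)}$) and an AH-specific second endpoint as $p \to 2^-$, where the $L^p \to L^{p'}$ norm of $dE_P(\blambda)$ remains uniformly bounded in $\blambda$. A short computation confirms that the linear interpolation $n/(n+2) \cdot (1-\theta)$ at interpolation parameter $\theta = (n+2)(1/p_0 - 1/p) \in [0,1]$ gives exactly the exponent $n(1/p-1/2)$. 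To produce the uniform endpoint one exploits the oscillatory structure $e^{\pm i\blambda d(z,z')} b_\pm(\blambda,z,z')$ of the kernel from \eqref{bpm} together with the critical exponential amplitude decay of $b_\pm$ at large $d(z,z')$ from \eqref{bpm2.5}; this combination is an AH analogue of the Kunze-Stein phenomenon. This second range is the main obstacle: a purely pointwise estimate on the far-diagonal kernel yields only $\|T_{\mathrm{far}}\|_{L^p \to L^{p'}} \leq C\blambda^{n/2}$, strictly larger than the target $\blambda^{n(1/p-1/2)}$ when $p$ is close to $2$, so the oscillation $e^{\pm i\blambda d(z,z')}$ must be used together with the AH volume growth to recover the sharper $\blambda$-dependence.
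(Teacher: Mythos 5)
Your low-energy argument and your treatment of the Stein--Tomas range $1 \le p \le 2(n+2)/(n+4)$ follow the paper's route (localization near the front face and transfer to $\HH^{n+1}$ where the Anker--Pierfelice bound \eqref{KS} applies; complex interpolation against the kernel bounds at high energy). One repair is needed in the reassembly step: the off-diagonal blocks $Q_j\, dE_P(\blambda)\, Q_k^*$ with $j\neq k$ are \emph{not} negligible --- the spectral measure propagates microsupport along bicharacteristics, so any two blocks whose microsupports are joined by a geodesic contribute at full strength. The correct reassembly is the $TT^*$ square-root trick,
$\|dE_P(\blambda)\|_{L^p\to L^{p'}} \le \big(\sum_i \|Q_i^*\, dE_P(\blambda)\, Q_i\|_{L^p\to L^{p'}}^{1/2}\big)^2$,
which requires only the diagonal blocks and is already built into Proposition~\ref{prop:ci} and Theorem~\ref{kernel2restriction}.

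The genuine gap is in the range $2(n+2)/(n+4) < p < 2$. You propose real interpolation between the Stein--Tomas endpoint and ``an AH-specific second endpoint as $p\to 2^-$ where the $L^p\to L^{p'}$ norm of $dE_P(\blambda)$ remains uniformly bounded in $\blambda$.'' No such endpoint is available: at $p=2$ the operator $dE_P(\blambda)$ is unbounded, and for each fixed $q<2$ the uniform-in-$\blambda$ bound you would feed into the interpolation is strictly \emph{stronger} than the estimate $C\blambda^{n(1/q-1/2)}$ being proved (that exponent is positive for every $q<2$), so the scheme is circular. You correctly note that pointwise bounds alone give only $\blambda^{n/2}$ and that the oscillation must be exploited, but you supply no mechanism for doing so. The paper's mechanism is to remain inside the complex-interpolation framework and raise the order of differentiation: one proves
$\|Q_i(\blambda)\phi(P/\blambda)\chi_+^{-j-1+is}(\blambda-P)Q_i^*(\blambda)\|_{L^1\to L^\infty} \le C\,\blambda^{n/2}e^{C(1+|s|)}$
for \emph{every} integer $j\ge n/2$, because the factors $d(z,z')^{j}$ produced by $j$ differentiations of $e^{\pm i\blambda d(z,z')}$ are absorbed by the $e^{-nd(z,z')/2}$ decay of $b_\pm$ in \eqref{bpm2.5} --- precisely the step that fails on asymptotically conic spaces. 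Interpolating this against the trivial $L^2\to L^2$ bound for $\chi_+^{is}(\blambda-P)$ via Proposition~\ref{prop:ci} with $\beta=j+1$ yields $p=2(j+1)/(j+2)$ and operator norm $C(\blambda^{n/2})^{1/(j+1)}=C\blambda^{n(1/p-1/2)}$, and letting $j\to\infty$ exhausts all $p<2$. Without this, or an equivalent quantitative use of the oscillation, the second half of \eqref{eqn:high energy restriction} is not established.
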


\begin{remark} The range of exponents $p$ is greater for a hyperbolic space than for a conic (Euclidean) space. Indeed,  it includes all $p < 2$, while on Euclidean space $\RR^d$, the well-known Knapp example shows that the restriction estimate cannot hold for $p > 2(d+1)/(d+3)$. (The Knapp example does not apply to hyperbolic space as it relies on the dilation symmetry of $\RR^d$.) For high energies, $\blambda \geq 1$, the exponent is the same as on $\RR^d$ for the range $1 \leq p \leq 2(d+1)/(d+3)$ but again we get the full range of $p$ up to $p=2$.

This surprising result is closely tied to a non-Euclidean feature of hyperbolic space related to the Kunze-Stein phenomenon \cite{Kunze-Stein}. The Kunze-Stein phenomenon for semisimple Lie groups is that there is a much larger set of exponents $p, q, r$ for which one has
$$
L^p * L^q \subset L^r,
$$
compared to Euclidean space. Since $\HH^{n+1}$ can be viewed as $SO(n+1, 1)/SO(n+1)$, this has consequences for convolution on $\HH^{n+1}$. Anker and Pierfelice \cite{Anker-Pierfelice}, \cite[Section 4]{Anker-Pierfelice-2014} showed that convolution with a radial kernel $\kappa(r)$ satisfies
\begin{equation}
\| f * \kappa \|_{L^q(\HH^{n+1})} \leq C_q \| f \|_{L^{q'}(\HH^{n+1})} \bigg( \int_0^\infty (\sinh r)^n (1+r) e^{-nr/2} |\kappa(r)|^{q/2} \, dr \bigg)^{2/q}, \quad q \geq 2.
\label{KS}\end{equation}
From this we see that if $\kappa(r)$ is smooth and decays as $e^{-nr/2}$, then convolution with $\kappa$ maps $L^p$ to $L^{p'}$ for all $p \in [1, 2)$. Additionally, this non-Euclidean feature also affects the range of valid Strichartz estimates on (asymptotically) hyperbolic manifolds --- see \cite{Anker-Pierfelice, Ionescu-Staffilani-Mathann-2009, Chen-Hassell3}.
\end{remark}


\subsubsection{Spectral multipliers}

Our result for spectral multipliers is restricted to the case where the manifold is, in addition, a Cartan-Hadamard manifold, i.e. simply connected with nonpositive sectional curvatures.

\begin{theorem}\label{thm:sm} Suppose $(X, g)$ is an $n + 1$-dimensional non-trapping asymptotically hyperbolic manifold with no resonance at the bottom of spectrum. Suppose in addition that $X$ is simply connected with nonpositive sectional curvatures. Then for any $F \in H^{s}(\mathbb{R})$ supported in $[-1, 1]$ with $s > (n + 1)/2$, and for all $p \in [1, 2)$, $F(\alpha P)$ is a  bounded operator on $L^p + L^2$ uniformly with respect to parameter $\alpha$ for $0 < \alpha < 1$, in the sense $$ sup_{\alpha \in (0, 1]} \big\| F(\alpha P)\big\|_{L^p(X) + L^2(X) \longrightarrow L^p(X) + L^2(X)} < \infty.  $$\end{theorem}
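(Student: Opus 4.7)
The strategy is to adapt the Guillarmou-Hassell-Sikora approach of Theorem~\ref{restriction2multiplier} to the exponential-volume-growth setting. Because $L^p$ boundedness for $p \neq 2$ is obstructed by the results of Clerc-Stein and Taylor, the best possible result is $L^p + L^2$ boundedness. This weaker conclusion lets us push the ``non-$L^p$'' part of $F(\alpha \sqrtdelta)$ into $L^2$, exploiting the trivial bound $\|F(\alpha\sqrtdelta)\|_{L^2 \to L^2} \leq \|F\|_{L^\infty} \leq C\|F\|_{H^s}$, which follows from spectral calculus and Sobolev embedding since $s > (n+1)/2 > 1/2$. The Cartan-Hadamard hypothesis is essential: it guarantees that the pointwise spectral measure bounds of Theorem~\ref{thm:kernelbounds} hold without microlocalization, and it provides the geometric structure needed for a Kunze-Stein-type argument on $X$.

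Since the $L^2 \to L^2$ bound is automatic, it suffices to show $F(\alpha \sqrtdelta) : L^p(X) \to L^p(X) + L^2(X)$ uniformly in $\alpha \in (0, 1]$. I would decompose the Schwartz kernel spatially using a smooth cutoff $\chi(z, z')$ equal to $1$ on $\{d(z,z') \leq 1\}$ and supported in $\{d(z,z') \leq 2\}$:
\[
F(\alpha \sqrtdelta) = T^{\mathrm{near}}_\alpha + T^{\mathrm{far}}_\alpha.
\]
For the near part, the pointwise bounds of Theorem~\ref{thm:kernelbounds} in $\{d(z,z') \leq 1\}$ coincide in form with those for the asymptotically conic case, and the bounded local geometry of $X$ allows me to cover $X$ by balls of radius $2$ with finite overlap and apply the argument of Theorem~\ref{restriction2multiplier} locally on each ball. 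This gives uniform $L^p \to L^p$ boundedness of $T^{\mathrm{near}}_\alpha$ for $p \in [1, 2)$.

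For the far part, I exploit the exponential decay factor $e^{-nd(z,z')/2}$ in the pointwise bounds together with the oscillatory representation~\eqref{bpm} of Theorem~\ref{thm:kernelbounds1}. The goal is to show the kernel of $T^{\mathrm{far}}_\alpha$ enjoys decay of the form $C e^{-nd(z,z')/2}(1+d(z,z'))^{-N}$ for some $N$ large enough, uniformly in $\alpha$. Given such a bound, the Kunze-Stein-type inequality~\eqref{KS} yields $L^p \to L^{p'}$ boundedness for $p \in [1, 2)$, and the elementary embedding $L^{p'} \subset L^p + L^2$ for $p \leq 2 \leq p'$ (obtained by splitting $f = f \mathbf{1}_{|f| \leq 1} + f \mathbf{1}_{|f| > 1}$) gives the required $L^p \to L^p + L^2$ bound.

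The hard part is extracting this uniform kernel estimate on $T^{\mathrm{far}}_\alpha$. A direct substitution of $|dE_{\sqrtdelta}(\blambda)(z,z')| \leq C\blambda^{n/2} e^{-nd(z,z')/2}$ into $\int F(\alpha\blambda) \, dE_{\sqrtdelta}(\blambda)$ yields a bound of order $\alpha^{-n/2-1} e^{-nd(z,z')/2}$, which is useless as $\alpha \to 0$. The saving cancellation must come from the oscillatory factors $e^{\pm i\blambda d(z,z')}$ in~\eqref{bpm}: after the rescaling $\mu = \alpha \blambda$ the phase becomes $\mu d(z,z')/\alpha$, and Plancherel or integration by parts against the $H^s$-function $F$ extracts a gain of approximately $(d(z,z')/\alpha)^{-s}$. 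The condition $s > (n+1)/2$ is precisely what is needed so that, after combining with the factor $(\sinh d)^n \sim e^{n d}$ in the Kunze-Stein density of~\eqref{KS}, all $\alpha$-dependence cancels and a uniform $L^p \to L^{p'}$ bound emerges. The low-energy part of the integral ($\blambda \leq 1$) is simpler and handled by the better pointwise bound $\blambda^2 e^{-nd/2}$ from Theorem~\ref{thm:kernelbounds} directly, without stationary phase.
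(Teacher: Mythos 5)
Your overall architecture (trivial $L^2\to L^2$ bound, near/far splitting of the kernel at $d(z,z')=1$, GHS-style argument for the near part) matches the paper, and the near-diagonal part is essentially correct: the paper proves a modified version of Theorem~\ref{restriction2multiplier} valid when only balls of radius $\leq 1$ are doubling, which with $s>(n+1)/2$ gives $L^p\to L^p$ boundedness of $F(\alpha\sqrtdelta)\chi_{d(z,z')\leq 1}$ at $p=1$ and hence, by duality and interpolation, for all $p$. The gap is in the far-from-diagonal part, and it is twofold. First, the uniform pointwise kernel bound $|K_\alpha(z,z')|\leq Ce^{-nd/2}(1+d)^{-N}$ that your argument hinges on is not obtainable from $F\in H^s$. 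After rescaling, the oscillatory integral you need to control is essentially $\hat F$ (convolved with the Fourier transform of $\theta(\bmu)\bmu^{n/2-j}\phi(\bmu)$) evaluated at $r/\alpha$, and $F\in H^s$ gives only the \emph{square-integrated} decay $\int_{|\xi|\geq R}|\hat F(\xi)|^2\langle\xi\rangle^{2s}d\xi<\infty$, not the pointwise decay $|\hat F(\xi)|\lesssim|\xi|^{-s}$; integration by parts $s$ times is likewise unavailable pointwise for a function with only $s$ derivatives in $L^2$. Second, even granting such a bound, the Kunze--Stein inequality \eqref{KS} is an inequality for radial convolution on $\HH^{n+1}$; for the globally supported far-diagonal kernel on a general asymptotically hyperbolic Cartan--Hadamard manifold there is no convolution structure, and no transfer argument (of the kind used in Section~\ref{sec:le}, which localizes near the front face) is available.

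The paper's resolution of both problems is to aim lower for the far part: one only needs $L^1\to L^2$ (which, with the $L^2\to L^2$ bound, interpolates to $L^p\to L^2$ and hence suffices for the $L^p+L^2$ conclusion), and the $L^1\to L^2$ norm is $\sup_{z'}\big(\int|K_\alpha(z,z')|^2\,d\mu(z)\big)^{1/2}$, an intrinsically $L^2$-in-$r$ quantity compatible with the $H^s$ hypothesis via Plancherel. Two ingredients are then needed that your proposal does not supply: (i) Lemma~\ref{lem:measure}, a uniform comparison $d\mu\sim(\sinh r)^n\,dr\,d\omega$ in polar normal coordinates about an arbitrary base point $z'$, which is extracted from the symbol calculus of the resolvent construction and is what makes the factor $e^{-nr}$ from $|e^{-nr/2}|^2$ exactly cancel the volume growth; and (ii) Lemma~\ref{lem:FG}, the estimate $\int_{r\geq R}|(\hat F*\hat G)(r)|^2\,dr=O(R^{-(2m+1)})$ for $G(\blambda)=\theta(\blambda)\blambda^m\phi(\blambda)$, which after the rescaling $R=\alpha^{-1}$ exactly absorbs the power $\alpha^{-n-1+2j}$ coming from the factors $\blambda^{n/2-j}$ in \eqref{bpm}. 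You should also account for the non-oscillatory remainder terms $c$, $a_\pm$, $\tilde a_\pm$ in the structure of the spectral measure, which the paper handles by a single integration by parts using that $\alpha F'(\alpha\cdot)$ has uniformly bounded $L^1$ norm for $F\in H^1$ of compact support.
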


This is weaker than Theorem~\ref{restriction2multiplier}, both because the function space is $L^p + L^2$ rather than $L^p$, but also because we have strengthened the Sobolev condition to $s > (n+1)/2$ for all $p$. From the perspective of harmonic analysis, it would be interesting to find a `better' function space, that is, more closely associated to the Laplacian, to accommodate the boundedness of the spectral multiplier. Modern harmonic analysis (Calder\'{o}n-Zygmund theory) is generally built on spaces with a doubling measure, which activates some kind of covering lemma and gives a simple structure of cube nets. Though some authors have investigated non-doubling spaces, the advances are mainly restricted to spaces of polynomial growth, which are ``semi-doubling".  In any case, the harmonic analysis on space of exponential growth is barely explored. One recent work along these lines is due to Bouclet \cite{Bouclet-ann Fourier-2011}, where it is shown that semiclassical spectral multipliers are bounded on appropriate \emph{weighted} $L^p$ spaces in a setting with exponential volume growth. 
The authors plan to pursue this question in future publications. 
\subsection{Strichartz estimates on asymptotically hyperbolic manifolds}

In the third paper in this series, \cite{Chen-Hassell3}, the first author will prove global-in-time Strichartz type estimates without loss  on non-trapping asymptotically hyperbolic manifolds. Namely,
for solutions of the inhomogeneous Schr\"odinger equation,
\begin{equation*}\label{schrodinger cauchy} \left\{ \begin{array}{c} i \frac{\partial}{\partial t} u  +  \Delta u = F(t, z)  \\  u(0, z) = f(z) \end{array}
\right. \end{equation*} with $f$ and $F$ orthogonal to eigenfunctions of $\Delta$ 
on an $n + 1$-dimensional asymptotically hyperbolic manifold $X$, one has the estimate
$$\|u\|_{L^p(\mathbb{R}, L^q(X))} \leq C \|f\|_{L^2(X)} + \|F\|_{L^{\tilde{p}^\prime}(\mathbb{R}, L^{\tilde{q}^\prime}(X))}$$
provided the pairs $(q, r)$ and $(\tilde{q}, \tilde{r})$ are  hyperbolic Schr\"{o}dinger admissible pairs of exponents.

\subsection{Outline of the paper}

The paper is organized as follows. In Section~\ref{sec:model}, we show how the main results in Section~\ref{subsec:mr} follow in the simple case of hyperbolic 3-space $\HH^3$.
In Section~\ref{sec:ahm}, we review the geometry and analysis of asymptotically hyperbolic manifolds, recalling the main results of \cite{Mazzeo-Melrose} and \cite{Chen-Hassell1}.
In Section~\ref{sec:le} we prove the restriction estimate, Theorem~\ref{thm:restriction},  for low energy, which exploits, in some sense, the Kunze-Stein phenomenon on $\HH^{n+1}$.

In Section~\ref{sec:wf}, in preparation for the high-energy estimates, we show how the microlocal support of the spectral measure may be localized by pre-and post-composition by pseudodifferential operators.
 In Section~\ref{sec:he} we prove Theorem~\ref{thm:restriction} for high energy. This uses, in a crucial way, the semiclassical Lagrangian structure of the high-energy spectral measure proved in \cite{Chen-Hassell1} and \cite{Wang}. Finally, in Section~\ref{sec:sm}, we prove the spectral multiplier result, Theorem~\ref{thm:sm}.

The authors would like to thank C. Guillarmou, A. McIntosh and A. Sikora for various helpful discussions during working on this paper. The authors gratefully acknowledge the support of the Australian Research Council through Discovery Grant  DP120102019.


\section{The model space $\mathbb{H}^3$}\label{sec:model}
In this section we illustrate the results of Theorems~\ref{thm:kernelbounds}, \ref{thm:restriction} and \ref{thm:sm} in the simple case of hyperbolic space. We focus on the case of $\HH^3$, in which the formulae are particularly simple.

Hyperbolic space can be defined in terms of the half space model
$$
\mathbb{H}^{n + 1} = \{ (x, y) \in \RR \times \RR^n \mid x > 0 \} ,
$$
equipped with the metric
$$\frac{dx^2 + dy^2}{x^2},$$
or in terms of the Poincar\'e disc model, as in \eqref{Poincare}.
For odd dimensions, that is, when $n = 2k$ is even,  the Schwartz kernel of $g(\sqrtdelta)$ is given by the explicit formula  \begin{equation}\label{funclapl}
\frac{1}{\sqrt{2\pi}} \bigg(- \frac{1}{2\pi}\frac{1}{\sinh(r)}\frac{\partial}{\partial r}\bigg)^k \hat{g}(r),
\end{equation} where $\sqrtdelta = (\Delta - {n}^2/4)^{1/2}$ as before,  and $r$ is geodesic distance on $\mathbb{H}^{n+1}$. See \cite[p.105]{Taylor2} for proof.

\subsection{Kernel bounds for the spectral measure}
In particular, $(\Delta - {n}^2/4 - \blambda^2)^{-1} = (P^2 -
\blambda^2)^{-1}$ for $\Im \blambda > 0$ is
\begin{equation}\begin{gathered}
-\frac{1}{2i\blambda}\bigg(-\frac{1}{2\pi}\frac{1}{\sinh(r)}\frac{\partial}{\partial r}\bigg)^k e^{i \blambda r}, \quad \Im \blambda > 0, \\
-\frac{1}{2i\blambda}\bigg(-\frac{1}{2\pi}\frac{1}{\sinh(r)}\frac{\partial}{\partial
r}\bigg)^k e^{-i \blambda r}, \quad \Im \blambda < 0.
\end{gathered}\label{ksm}\end{equation}
Setting now $k=1$, and applying Stone's formula \eqref{Stone2}, we find that  on $\HH^3$,
\begin{equation}
dE_{P}(\blambda) = \frac{\blambda}{2\pi}\frac{\sin (\blambda r)
}{\sinh r}. \label{spmeasH3}\end{equation}

\subsection{Restriction estimate}
Next, we deduce Theorem~\ref{thm:restriction} for $\HH^3$. The estimate for low energy follows immediately from \eqref{spmeasH3} and \eqref{KS}. The estimate for high energy and $p \in [1, 4/3]$ can be deduced from Theorem~\ref{kernel2restriction}:

\begin{proposition} $dE_{P}(\blambda)$ maps $L^p(\mathbb{H}^3)$ to $L^{p^\prime}(\mathbb{H}^3)$ with a bound $C \blambda^{3(1/p - 1/p^\prime) - 1}$ for all $\blambda > 0$, provided $1 \leq p \leq 4/3$.\end{proposition}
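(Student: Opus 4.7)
The plan is to invoke the abstract restriction theorem (Theorem~\ref{kernel2restriction}) with $L = P^2$, $d = \mu$ the hyperbolic distance and volume, and dimension parameter $m = 3$. Note that $2(m+1)/(m+3) = 4/3$ and $m(1/p-1/p')-1 = 3(1/p-1/p')-1$, so once the hypotheses are verified the stated conclusion is exactly what the abstract theorem delivers. Since $m = 3$ is odd, the hypotheses require pointwise bounds of the form
\[
\Big|\Big(\frac{d}{d\blambda}\Big)^{j} dE_{P}(\blambda)(z,z')\Big| \leq C\blambda^{m-1-j}(1+\blambda r)^{-(m-1)/2+j}, \qquad r = d(z,z'),
\]
only for the two values $j = m/2-3/2 = 0$ and $j = m/2+1/2 = 2$. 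The whole of the work is then to check these two pointwise bounds directly from the explicit formula \eqref{spmeasH3}.

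For $j=0$, the required estimate is $|dE_{P}(\blambda)(z,z')| \leq C\blambda^{2}(1+\blambda r)^{-1}$. From $dE_{P}(\blambda) = (2\pi)^{-1}\blambda\sin(\blambda r)/\sinh r$, the elementary inequalities $|\sin(\blambda r)| \leq \min(1,\blambda r)$ and $\sinh r \geq r$ immediately give
\[
|dE_{P}(\blambda)(z,z')| \leq C\min(\blambda^{2},\blambda/r) \leq C\blambda^{2}(1+\blambda r)^{-1}.
\]
Exponential decay from $\sinh r$ at large $r$ makes the large-distance regime easier still.

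For $j=2$, differentiate the numerator twice to obtain
\[
\Big(\frac{d}{d\blambda}\Big)^{2}\bigl(\blambda\sin(\blambda r)\bigr) = 2r\cos(\blambda r) - \blambda r^{2}\sin(\blambda r),
\]
so the bound reduces to controlling $(r + \blambda r^{2})/\sinh r$ by $1+\blambda r$. For $r \leq 1$, the ratios $r/\sinh r$ and $r^{2}/\sinh r$ are both uniformly bounded, giving $O(1+\blambda r)$. For $r \geq 1$, the denominator grows exponentially while the numerator grows polynomially, so the left side is $O(1+\blambda) \leq O(1+\blambda r)$.

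No step is genuinely delicate: the exponential decay of $1/\sinh r$ at infinity is more than enough to beat the polynomial weights $(1+\blambda r)^{\alpha}$ demanded by the hypotheses, and the small-$r$ regime is the same as on $\RR^{3}$. The only mild care needed is the algebraic combination $\min(\blambda^{2},\blambda/r) \lesssim \blambda^{2}(1+\blambda r)^{-1}$ in the $j=0$ case. With both hypotheses verified, Theorem~\ref{kernel2restriction} yields $\|dE_{P}(\blambda)\|_{L^{p}\to L^{p'}} \leq C\blambda^{3(1/p-1/p')-1}$ for all $p \in [1,4/3]$, as asserted.
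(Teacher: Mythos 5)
Your proposal is correct and follows essentially the same route as the paper: both verify the $j=0$ and $j=2$ kernel bounds directly from the explicit formula $dE_{P}(\blambda)=\frac{\blambda}{2\pi}\frac{\sin(\blambda r)}{\sinh r}$ and then apply Theorem~\ref{kernel2restriction} as a black box. The only cosmetic difference is that you package the $j=0$ case via $|\sin(\blambda r)|\leq\min(1,\blambda r)$ rather than splitting into the regimes $\blambda r\gtrless 1$ as the paper does.
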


\begin{proof}

We assert the kernel estimates of Theorem \ref{kernel2restriction}
hold for this spectral measure, that is,
$$\big|dE_{P}(\blambda)\big| \leq C \frac{\blambda^2}{1 + \blambda
d(z, z^\prime)} \quad \mbox{and} \quad
\bigg|\bigg(\frac{d}{d\blambda}\bigg)^2\,dE_{P}(\blambda)\bigg| \leq
C \big(1 + \blambda d(z, z^\prime)\big).$$  In fact, one may see
$$dE_{P}(\blambda) = \frac{\blambda\sin\big(\blambda d(z,
z^\prime)\big)}{\sinh\big(d(z, z^\prime)\big)} \leq C \frac{\blambda}{d(z, z')} \leq
C \frac{\blambda^2}{1 + \blambda d(z, z^\prime)},$$ when $\blambda
d(z, z^\prime) > 1$; $$dE_{P}(\blambda) =
\frac{\blambda\sin\big(\blambda d(z, z^\prime)\big)}{\sinh\big(d(z,
z^\prime)\big)}\leq C \blambda^2 \leq C \frac{\blambda^2}{1 +
\blambda d(z, z^\prime)},$$ when $\blambda d(z, z^\prime) < 1$. On
the other hand, it is clear that
$$\bigg|\bigg(\frac{d}{d\blambda}\bigg)^2\,dE_{P}(\blambda)\bigg| =
\bigg|\frac{2d(z, z^\prime)\cos\big(\blambda d(z, z^\prime)
\big)}{\sinh\big(d(z, z^\prime)\big)} - \frac{ \blambda d(z,
z^\prime)^2 \sin\big(\blambda d(z, z^\prime)\big)}{\sinh\big(d(z,
z^\prime)\big)}\bigg| \leq C \big(1 + \blambda d(z,
z^\prime)\big).$$ Then applying Theorem \ref{kernel2restriction}
proves the proposition.

\end{proof}

In the range $p \in [4/3, 2)$ and for high energy, we again use complex interpolation, but rather than applying Theorem~\ref{kernel2restriction} as a black box, we need to modify the proof slightly. We observe that the spectral measure on $\HH^3$ satisfies
\begin{equation}
\Big| (\frac{d}{d\blambda})^j \spmeas \Big| \leq \blambda \quad
\text{ for all } j \geq 1.
\end{equation}
We substitute this estimate in place of the kernel bounds of
Theorem~\ref{kernel2restriction}, and run the proof of \cite[Section
3]{Guillarmou-Hassell-Sikora}. As in that proof, we consider the
analytic family of operators $\chi_+^a(\blambda - P)$. The proof
works just the same;\footnote{{We refer the reader to Section \ref{sec:restriction} for more details.}} in place of equation (3-7) of \cite[Section
3]{Guillarmou-Hassell-Sikora} and the previous equation, we obtain
\begin{equation}
\big\| \chi^{is}_+(\blambda - P) \big\|_{L^2 \to L^2} \leq C e^{\pi
|s|/2}
\end{equation}
on the line $\Re a = 0$, and
\begin{equation}
\big\| \chi^{-b+ is}_+(\blambda - P) \big\|_{L^1 \to L^\infty} \leq
C (1 + |s|) e^{\pi |s|/2} \blambda
\end{equation}
on the line $\Re a = -b$, for any $b > 1$. Let $p \in (4/3, 2)$, and
choose $b = p/(2-p)$. Using the fact that the spectral measure is
$\chi_+^{-1}(\blambda - P)$, and applying complex interpolation, we
find that
\begin{equation}
\big\| dE_P(\blambda) \big\|_{L^p \to L^{p'}} \leq C
\blambda^{p/(2-p)}.
\end{equation}

\subsection{Spectral multiplier estimate}

The hyperbolic space $\mathbb{H}^3$ is a non-doubling space but rather has exponential volume growth, i.e. the volume of a ball with radius $r$ satisfies $|B(r)| \sim (\sinh r)^2.$ The lack of doubling means that we cannot apply Theorem~\ref{restriction2multiplier} directly. Nevertheless, we can decompose the kernel of a spectral multiplier $F(\sqrtdelta)$ into two parts, one supported where $r \leq 1$ and one supported where $r \geq 1$, using a cutoff function $\chi_{\text{diag}}$, say, the characteristic function of $\{ r \leq 1 \}$ on $\HH^3 \times \HH^3$.

Then the proof of Theorem~\ref{restriction2multiplier} applies to $F(\sqrtdelta) \chi_{r\leq1}$, since all that is required for this proof to work is that doubling is valid for all balls of radius $\leq 1$, which is certainly true. We obtain

\begin{lemma}\label{smH3} For every even function $F \in H^s(\mathbb{R})$ supported in $[-1, 1]$ with $s > 3(1/p - 1/2)$, $F(\alpha \sqrtdelta)\chi_{r\leq1}$ maps $L^p(\mathbb{H}^3)$ to itself with a uniform bound $$\sup_{\alpha > 0} \|F(\alpha \sqrtdelta)\chi_{r\leq1}\|_{L^p(\mathbb{H}^3) \rightarrow L^p(\mathbb{H}^3)} \leq C \|F\|_{H^s},$$ provided $1 \leq p \leq 4/3$, where $\chi_{r\leq1}$ is the characteristic function of the set $\{(z_1, z_2) : d(z_1, z_2) < 1\}$.
\end{lemma}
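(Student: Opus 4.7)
The plan is to apply the proof of Theorem~\ref{restriction2multiplier} from \cite{Guillarmou-Hassell-Sikora} directly to $F(\alpha\sqrtdelta)\chi_{r\leq 1}$. The only hypothesis of that theorem not satisfied by $\mathbb{H}^3$ is \emph{global} volume doubling: the kernel truncation $\chi_{r\leq 1}$ confines all relevant integrals to balls of radius $\leq 1$, where doubling does hold on $\mathbb{H}^3$.

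First I would record the local doubling statement: on $\mathbb{H}^3$ the volume of a ball is independent of its centre, with the explicit formula $|B(z,\rho)| = \pi(\sinh(2\rho)-2\rho)$, so $c\rho^3 \leq |B(z,\rho)| \leq C\rho^3$ uniformly for $\rho \in (0,1]$ and the doubling constant on balls of radius $\leq 1$ is uniformly bounded. The remaining inputs of Theorem~\ref{restriction2multiplier} are readily available on $\mathbb{H}^3$: finite propagation speed of $\cos(t\sqrtdelta)$ holds on any complete Riemannian manifold, and the restriction bound $\|dE_P(\blambda)\|_{L^p \to L^{p'}} \leq C\blambda^{3(1/p - 1/p')-1}$ for $p \in [1, 4/3]$ is precisely the content of the preceding proposition (supplemented by the low-energy bound coming from \eqref{KS}).

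Next I would trace through the proof of Theorem~\ref{restriction2multiplier}. In broad strokes, one dyadically decomposes $F = \sum_\ell F_\ell$ in frequency, uses the restriction estimate to control the Schwartz kernel of each $F_\ell(\alpha\sqrtdelta)$, and then applies volume doubling to pass from kernel bounds to the $L^p$ operator norm bound (via either a Calder\'on--Zygmund decomposition or a Schur-type argument). Since the kernel of $F(\alpha\sqrtdelta)\chi_{r\leq 1}$ is supported in $\{d(z,z') \leq 1\}$, every volume integral appearing in this machinery involves balls of radius $\lesssim 1$, so one substitutes the local doubling constant from the first step wherever the global doubling constant appears in \cite{Guillarmou-Hassell-Sikora}. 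With $m = 3$, the Sobolev threshold $s > 3(1/p - 1/2)$ and the range $p \in [1, 4/3]$ are inherited verbatim.

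The main obstacle is bookkeeping rather than a genuine mathematical difficulty: one must verify at each step of the GHS argument that only volume ratios on balls of radius $\leq 1$ are invoked. Some care is needed in the weak-$(1,1)$ / Calder\'on--Zygmund step, where one normally covers the ambient space by balls and extracts a Vitali-type subcollection; here, because the truncated kernel is supported within distance $1$ of the diagonal, it suffices to work with a locally uniform covering of $\mathbb{H}^3$ by unit balls, on which the local doubling estimate applies. Modulo this verification, the statement follows directly.
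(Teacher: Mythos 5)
Your proposal is correct and is essentially the paper's own argument: the paper disposes of this lemma by observing that the proof of Theorem~\ref{restriction2multiplier} from \cite{Guillarmou-Hassell-Sikora} only ever invokes doubling for balls of radius $\leq 1$ once the kernel is truncated by $\chi_{r\leq 1}$, exactly as you say (the detailed verification, via the modified covering lemma and the dyadic decomposition $F=\sum_\ell F_\ell$ with finite propagation speed and the $T^*T$/restriction step, is carried out in the abstract setting in Section~\ref{sec:sm}). The only minor imprecision is your reference to a weak-$(1,1)$/Calder\'on--Zygmund step: the relevant GHS argument for compactly supported $F$ passes from $L^p\to L^2$ to $L^p\to L^p$ by a H\"older/covering argument on balls whose radius is controlled by finite propagation speed, not by a CZ decomposition, but this does not affect the validity of your reasoning.
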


In particular, if $s > 3/2$, then this is valid for $p=1$, and thus by interpolation and duality for all $p \in [1, \infty]$.

For the other part, supported where $r \geq 1$, we show boundedness from $L^p(\HH^3) \to L^2(\HH^3)$. By interpolation, it is enough to treat the case $p=1$, since boundedness $L^2 \to L^2$ follows immediately from the boundedness of $F$.

The $L^1 \to L^2$ operator norm of an integral operator $K(z_1, z_2)$ is bounded by
$$
\sup_{z_2} \bigg(\int \big|K(z_1, z_2)\big|^2\, d\mu_1\bigg)^{1/2}.
$$
We express the kernel of $F(\sqrtdelta)\chi_{r>1}$ using \eqref{ksm}. So we need to estimate
$$\int_{\mathbb{S}^2 \times [1, \infty)} \bigg|\frac{1}{{(2\pi)}^{3/2}}  \frac{1}{\sinh(r)}\frac{\partial}{\partial r} \hat{F}(r)\bigg|^2 \sinh^2(r) \,dr\,d\omega \leq C \int_1^\infty \bigg|\frac{\partial }{\partial r}\hat{F}(r)\bigg|^2\,dr.$$


Write $F_\alpha(\blambda) = F(\alpha \blambda)$. For any $\alpha >
0$, we get the estimate for $F_\alpha$:
\begin{eqnarray*}
\int_{\mathbb{S}^2 \times [1, \infty)}\bigg|\frac{\partial}{\partial r}\hat{F_\alpha}(r)\bigg|^2\,dr d\omega & = & C \int_1^\infty\bigg|\frac{\partial}{\partial r}\frac{\hat{F}(r/\alpha)}{\alpha}\bigg|^2\,dr\\
& \leq & C \frac{1}{\alpha^3} \int_{1/\alpha}^\infty\bigg|\frac{\partial}{\partial r} \hat{F}(r)\bigg|^2\,dr\\
& \leq & C  \int_{1/\alpha}^\infty r^{3}  \bigg|\frac{\partial}{\partial r} \hat{F}(r)\bigg|^2\,dr\\
& \leq & C \big\| \blambda F(\blambda) \big\|_{H^{3/2}}^2 \leq  C
\big\|  F\big\|_{H^{3/2}}^2
\end{eqnarray*}
using the compact support of $F$.
Combining this estimate with Lemma~\ref{smH3},   we have proved Theorem~\ref{thm:sm} in the case of $\HH^3$.

\section{The geometry and analysis of asymptotically hyperbolic manifolds}\label{sec:ahm}

\subsection{$0$-structure}

Suppose $(X^\circ, g)$ is an $(n + 1)$-dimensional asymptotically hyperbolic manifold. Let $X$ be the compactification. We write $x$ for a boundary defining function, and use local coordinates $(x, y_1, \dots, y_n)$ near a boundary point of $X$, where $y = (y_1, \dots, y_n)$ restrict to coordinates on $\partial X$, or $z = (z_1, \dots, z_{n+1})$ in the interior of $X$.

Consider the space of smooth vector fields on the compactification, $X$, that are of uniformly finite length. Due to the factor $x^{-2}$ in the metric, such vector fields take the form $xV$, where $V$ is a smooth vector field on $X$. Such vector fields
are called $0$-vector fields, spanned over $C^\infty(X)$ near the boundary by the vector fields $x \partial_x$ and $x \partial_{y_i}$, $1 \leq i \leq n$. As observed by Mazzeo-Melrose, they are the space of sections of a vector bundle, known as the $0$-tangent bundle, ${}^0 TX$.

The dual bundle, known as the $0$-cotangent bundle and denoted ${}^0 T^* X$, is spanned by local sections $dx/x$ and $dy_i/x$ near the boundary. It follows that, near the boundary of $X$, we can write points $q \in {}^0 T^* X$ in the form
\begin{equation}
q = \lambda \frac{dx}{x} + \sum_{j=1}^n \mu_j  \frac{dy_j}{x};
\end{equation}
this defines linear coordinates $(\lambda, \mu)$ on each fibre of ${}^0 T^* X$ (near the boundary), depending on the coordinate system $(x, y)$.

The Laplacian $\Delta$ on $X$ is built out of an elliptic combination of $0$-vector fields. In fact, in local coordinates $(x, y)$ near the boundary of $X$, with $g$ taking the form \eqref{metric},  it takes the form
$$
(x D_x)^2 + i n x D_x + (x D_{y_i}) h^{ij} (x D_{y_j}) \text{ modulo } x  \ {}^0 \mathrm{Diff}^1(X),
$$
where we use ${}^0 \mathrm{Diff}^k(X)$ to denote differential operators of order $k$ generated over $C^\infty(X)$ by $0$-vector fields.

\subsection{The $0$-double space}
We would like to understand the nature of the Schwartz kernel of the resolvent $(\Delta - \zeta(n - \zeta))^{-1}$, on $X^\circ \times X^\circ$. Following Mazzeo-Melrose, we use a  compactification of the double space $X^\circ \times X^\circ$ that reflects the geometry of $(X^\circ, g)$, particularly near the diagonal. This is important as we want to view the resolvent as some sort of pseudodifferential operator, which means that we need a precise notion of what it means for a distribution to be conormal to the diagonal, uniformly out to infinity. 

Compactifying $X^\circ$ to $X$, we can initially view the resolvent kernel on $X^2$. However, on this space, the diagonal is not a p-submanifold where it meets the boundary. That is, near the boundary of the diagonal in $X^2$, there are no local coordinates of the form $(x, x', w)$ where $x$, resp. $x'$ is a boundary defining function for the left, resp. right, copy of $X$ and $w$ are the remaining coordinates, such that the diagonal is given by the vanishing of a subset of these coordinates. To give a workable definition of conormality to a submanifold, we require it to be a  p-submanifold. To remedy this, we blow up (in the real sense) the boundary of the diagonal. This creates a manifold with corners, denoted $X^2_0$, the `$0$-double space', with three boundary hypersurfaces: the two original ones, $\FL$ `left face' and $\FR$ `right face', corresponding to $\{ x = 0 \}$ and $\{ x' = 0 \}$ in $X^2$, and the new face $\FF$, the `front face', created by blowup --- see Figure~\ref{figure:X20}. We denote a generic boundary defining function for $\FL, \FR$ or $\FF$ by $\rho_{\FL}, \rho_{\FR}$ and $\rho_{\FF}$, respectively. 

\begin{center}\begin{figure}
\includegraphics[width=0.6\textwidth]{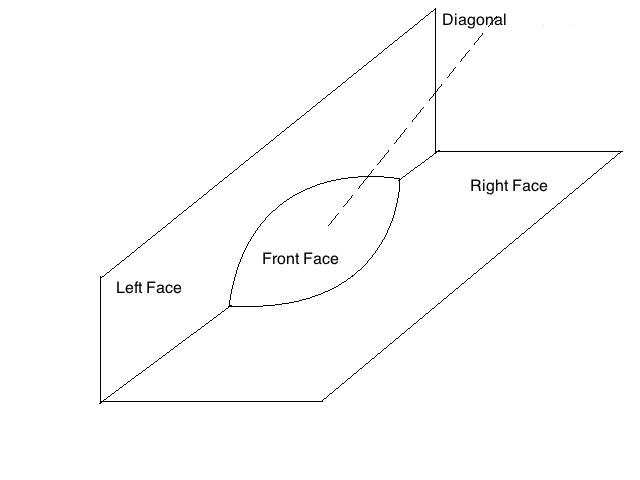}\caption{\label{fig}The $0$-blown-up double space $X \times_0 X$}
\label{figure:X20}\end{figure}\end{center}


As in \cite{Chen-Hassell1}, we write down coordinate systems in various regions of $X^2_0$, in terms of coordinates $(x,y) = (x, y_1, \dots, y_n)$ near the boundary of $X$, or $z = (z_1, \dots, z_{n+1})$ in the interior of $X$.
The unprimed coordinates always indicate those lifted from the left factor of $X$, while primed coordinates indicate those lifted from the right factor.
We label these different regions as follows:
\begin{itemize}
\item{Region 1:} In the interior of $X^2_0$. Here we use coordinates $$(z, z') = (z_1, \dots, z_{n+1}, z'_1, \dots, z'_{n+1}).$$

\item{Region 2a:} Near $\FL$ and away from $\FF$ and $\FR$.
In this region, we use $(x, y, z')$.
\item{Region 2b:} Near $\FR$ and away from $\FF$ and $\FL$.
Symmetrically,  we use $(z, x', y')$.

\item{Region 3:} Near $\FL \cap \FR$ and away from $\FF$.
Here we use $(x, y, x', y')$.

\item{Region 4a:} Near $\FF$ and away from $\FR$.
This is near the blowup. In this region we can use $s = x/x'$ for a boundary defining function for $\FF$. We use coordinate system
$$
s = \frac{x}{x'}, \ x', \ y, \ Y = \frac{y' - y}{x'}.
$$
\item{Region 4b:} Near $\FF$ and away from $\FL$.
Symmetrically, we use
$$
s' = \frac{x'}{x}, \ x, \ y', \ Y' = \frac{y - y'}{x}.
$$
\item{Region 5:} Near the triple corner $\FL \cap \FF \cap \FR$.
In this case, a boundary defining function for $\FF$ is $|y'-y|$. By rotating the $y$ coordinates, we can assume that $|y'_1 - y_1| \geq c |y' - y|$ in a neighbourhood of any given point in the triple corner. Assuming this, we use coordinates
$$
s_1 = \frac{x}{y_1' - y_1}, \ s_2 = \frac{x'}{y_1' - y_1}, \ t = y_1' - y_1, \ Z_j = \frac{y_j^\prime - y_j}{y_1^\prime - y_1} \, (j > 1).
$$
\end{itemize}

On $X^2_0$, the lift of the diagonal, denoted $\diagz$, meets the boundary in the interior of the front face $\FF$. It has several good geometric properties:
\begin{itemize}
\item $\diagz \subset X^2_0$ is a p-submanifold disjoint from $\FL$ and $\FR$;

\item the $0$-vector fields $x \partial_x$, $x \partial_{y_i}$ lift from the left and right factors of $X$ to be vector fields on $X^2_0$ that
are non-tangential to $\diagz$, uniformly down to the boundary of $\diagz$. Moreover, these vector fields span the normal bundle of $\diagz$, again uniformly down to the boundary.

\item The distance function $d(z,z')$ is smooth in a deleted neighbourhood of $\diagz$, and its square is a quadratic defining function for the lifted diagonal, i.e. it is smooth and vanishes to precisely second order at $\diagz$.
\end{itemize}

\subsection{Resolvent kernel} \label{resolvent kernel}

Taking advantage of the first and second geometric properties listed above, Mazzeo and Melrose `microlocalized' the space of $0$-differential operators to a calculus of $0$-pseudodifferential operators on $X$. The set of pseudodifferential operators of order $m$ on $X$, denoted $ \Psi_0^m(X)$,  is,  by definition, the set of operators on half-densities, whose Schwartz kernels are conormal of order $m$ to $\diagz$, and vanish to infinite order at $\FL$ and $\FR$.

Mazzeo and Melrose \cite{Mazzeo-Melrose} showed that the resolvent 
$$
R(\blambda) = \big(\Delta - n^2/4 - \blambda^2 \big)^{-1}, \quad \Im \blambda < 0, 
$$  takes the form
\begin{equation}
R(\blambda) \in \Psi^{-2}_0(X) + \rho_L^{n/2 + i\blambda} \rho_R^{n/2 + i\blambda} C^\infty(X \times_0 X).
\label{leres}\end{equation}

For low energy, this description is precise enough to deduce kernel
estimates for the spectral measure, restriction estimates, and
spectral multiplier theorems. However, as $\blambda \to \infty$, we
need a uniform description of the resolvent, and in particular we
need to understand its oscillatory nature. For this, we use the
description by Melrose-Sa Barreto-Vasy \cite{Melrose-Sa
Barreto-Vasy}, Wang \cite{Wang} and the present authors
\cite{Chen-Hassell1} (in the first paper of this series) of the
high-energy resolvent as a semiclassical Lagrangian distribution.
This is associated to the bicharacteristic relation on $X^\circ
\times X^\circ$, that is, the submanifold of $T^* X^\circ \times T^*
X^\circ$ given by
$$
\BR = \{ (z, \zeta; z', - \zeta') \mid |\zeta|_g = |\zeta'|_g = 1, \ (z, \zeta) \text{ and } (z', \zeta') \text{ lie on the same bicharacteristic} \},
$$
which is a smooth Lagrangian submanifold provided that $X$ is nontrapping.
By `bicharacteristic' we mean here the integral curves of the symbol of $\Delta$ on the set where $\sigma(\Delta) = 1$. In this case these are precisely geodesics, viewed as living in the cotangent bundle.

The bicharacteristic relation splits into the forward and backward bicharacteristic relations, $\BR_+$ and $\BR_-$, which\footnote{The forward bicharacteristic relation $\BR_+$ was denoted $\FBR$ in \cite{Chen-Hassell1}.} consist of those points $(z, \zeta; z', - \zeta') \in \BR$ for which $(z, \zeta)$ is on the forward/backward half of the bicharacteristic relative to $(z', \zeta')$. These two halves meet at $\BR \cap N^* \diag$, where
$N^* \diag$ denotes the conormal bundle of the diagonal,
$$
N^* \diag = \{ (z, \zeta, z', -\zeta) \}.
$$

We wish to understand the way in which $\BR$ compactifies when viewed as living over the double space $X^2_0$. We consider the bundle ${}^\Phi T^* X^2_0$, obtained by pulling back the bundle $({}^0 T^* X)^2$ to $X^2_0$ by the blowdown map $\beta : X^2_0 \to X^2$. We denote the bundle projection maps by ${}^\Phi \pi : {}^\Phi T^* X^2_0 \to X^2_0$. Then, as explained in \cite[Section 3]{Chen-Hassell1}, it is convenient to `shift' $\BR$ by the map $T_\pm$ defined by
\begin{equation}
T_\pm(q) = q \mp d(\log \rho_L) \mp d(\log \rho_R),  \quad q \in {}^\Phi T^* X^2_0,
\label{tL1}\end{equation}
for some choice of boundary defining functions $\rho_L$ for $\FL$ and $\rho_R$ for $\FR$; that is, we consider $T_-^{-1}(\BR_-) \cup T_+^{-1}(\BR_+)$. It is convenient here to assume that $\rho_L$ and $\rho_R$ are both constant near $\diagz$, so that these two shifted Lagrangian join smoothly at $N^* \diagz$.

In \cite{Chen-Hassell1} we showed\footnote{This was shown for the forward bicharacteristic relation in \cite{Chen-Hassell1}, but the statements in Proposition~\ref{prop:flowout-structure} follow immediately.}

\begin{proposition}\label{prop:flowout-structure}
The bicharacteristic relation $\BR$
can be expressed as the union of two relatively open subsets $\BR^{nd} \cup \BR^*$ , having the following properties.
\begin{itemize}
\item $\BR^{nd}$ contains a neighbourhood of the intersection $\BR \cap N^* \diag$ in $\BR$, that is, the points $(z, \zeta, z, -\zeta) \in \BR$.
\item Let $\Lambda^{nd}$ denote the lift of $\BR^{nd}$ to ${}^\Phi T^* X^2_0$, together with its limit points lying over $\FF$, $\FL$ and $\FR$. 
Let $\Lambda^{nd}_{\pm} = \Lambda^{nd} \cap \BR_\pm$ denote the two halves of this submanifold,   meeting at $N^* \diagz$. Then $\Lambda^{nd}_{\pm}$
are  manifolds with codimension three corners, with the property that the interior of $\Lambda^{nd}_+ $ is the 
graph of the differential of the distance function on some deleted neighbourhood $V$ of $(\diagz \cup \FF)\subset X^2_0$, and the interior of $\Lambda^{nd}_- $ is the 
graph of minus the differential of the distance function on  $V$.   Thus the projection ${}^\Phi \pi : \Lambda^{nd} \to X^2_0$ has full rank restricted to $\Lambda^{nd}$, except at $\Lambda^{nd} \cap N^* \diagz = \Lambda^{nd}_+ \cap \Lambda^{nd}_-$, where the rank of the projection ${}^\Phi \pi : \Lambda^{nd} \to X^2_0$ drops by $n$.
The boundary hypersurfaces of $\Lambda^{nd}$ are  $\partial_{\FF}\Lambda^{nd}$, lying over $\FF$,  $\partial_{\FL}\Lambda^{nd}$, lying over $\FL$ and $\partial_{\FR}\Lambda^{nd}$, lying over $\FR$. 

\item The image $\tL^{nd}$ of $\Lambda^{nd}$ under the shift \eqref{tL1}  is a smooth Lagrangian submanifold of $T^* X^2_0$ (NB: the standard cotangent bundle, not ${}^\Phi T^* X^2_0$) with codimension three corners. The projection $\pi : T^* X^2_0 \to X^2_0$ restricts to a map $\tL^{nd} \to X^2_0$ with full rank, except at $\tL^{nd} \cap N^* \diagz = \tL^{nd}_+ \cap \tL^{nd}_-$, where the rank of the projection ${}^\Phi \pi : \tL^{nd} \to X^2_0$ drops by $n$.

\item Let $\widetilde{\BR^*}$ denote the image of $\BR^*$ under the shift \eqref{tL1}, and let $\tL^*$ denote the closure of $\widetilde{\BR^*}$ in $T^* X^2$. Then $\tL^*$
 is a smooth Lagrangian submanifold of $T^* X^2$ (NB: the standard cotangent bundle, not the $0$-cotangent bundle)  with codimension two corners.
\end{itemize}

\end{proposition}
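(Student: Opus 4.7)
The plan is to reduce the proposition to results already established in \cite{Chen-Hassell1} and exploit the time-reversal symmetry of the geodesic flow. In \cite{Chen-Hassell1}, the forward bicharacteristic relation $\BR_+$ was decomposed into a near-diagonal flowout piece and a residual piece (denoted $\FBR$ there), and their lifts and shifts were shown to have the claimed Lagrangian structure. The map $\sigma : (z,\zeta;z',-\zeta')\mapsto (z,-\zeta;z',\zeta')$ sends $\BR_+$ to $\BR_-$ diffeomorphically, lifts to a diffeomorphism of $\,{}^\Phi T^* X^2_0$ and $T^*X^2_0$, and commutes with the shift $T_\pm$ (swapping $+$ and $-$). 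Hence all smoothness/corner assertions for $\Lambda^{nd}_-$, $\tL^{nd}_-$ and the backward component of $\tL^*$ follow from the corresponding forward statements by applying $\sigma$. So the only genuinely new content is the joining of $\Lambda^{nd}_+$ and $\Lambda^{nd}_-$ at $N^*\diagz$ and the corresponding rank statement.

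For the joining at $N^*\diagz$, I would work near the diagonal but away from $\FF, \FL, \FR$. Here $\BR$ is classical and one can parametrize both halves jointly by the bicharacteristic flowout of $N^*\diagz \cap S^*X$: define
\[
\Psi : (-\epsilon,\epsilon)_t \times (N^*\diagz \cap S^*X) \longrightarrow T^*(X^\circ\times X^\circ), \qquad \Psi(t,(z,\zeta,z,-\zeta)) = \Phi_t^L(z,\zeta;z,-\zeta),
\]
where $\Phi_t^L$ is the Hamilton flow of $|\zeta|_g^2$ acting on the left factor. This is a smooth immersion whose image is exactly $\Lambda^{nd}_+\cup\Lambda^{nd}_-$ on a neighborhood of $N^*\diagz$, with $\Lambda^{nd}_\pm$ corresponding to $\pm t\geq 0$. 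Because $d\Psi$ has full rank on the set $\{t\neq 0\}$ but collapses the $S^{n}$-fibre at $t=0$ to its base point, the rank of the projection to $X^2_0$ drops by exactly $n=\dim S^n$ along $N^*\diagz$, as asserted. Away from the diagonal but inside $\BR^{nd}$, the two halves are the graphs of $\pm d(d_g(z,z'))$ on a deleted neighborhood $V$ of $\diagz \cup \FF$; smoothness of the distance function on $V$ is part of the $\BR^{nd}$ definition, and the graph description combined with $\Psi$ gives $\Lambda^{nd}$ globally.

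For the boundary behavior at $\FF, \FL, \FR$ and the shift, I would quote Proposition~3.x of \cite{Chen-Hassell1} (the forward version of the present statement) applied to $\Lambda^{nd}_+$, and then invoke $\sigma$ for $\Lambda^{nd}_-$. The key input is the asymptotic expansion of the $0$-distance function: in Region 4a one has $d_g(z,z') = -\log s + \varphi(s,x',y,Y)$ with $\varphi$ smooth up to $\FF$, and near $\FL$ and $\FR$ one has $d_g = -\log\rho_L - \log\rho_R + (\text{smooth})$. Differentiating, the singular parts of $d[\pm d_g]$ are exactly $\mp d\log\rho_L \mp d\log\rho_R$, which are cancelled by the shift $T_\pm$ defined in \eqref{tL1}; this is why $\tL^{nd}_\pm$ is smooth across $\FL, \FR$ in the standard cotangent bundle, with codimension three corner at $\FF\cap\FL\cap\FR$. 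For $\BR^*$, nontrapping and the fact that $\BR^*$ is separated from $N^*\diag$ in $\BR$ imply that the closure lies in $T^*(X\times X)$ (no blowup of the diagonal needed), and exactly the same shift argument makes $\tL^*$ smooth across the two boundary faces of $X^2$, giving a codimension two corner.

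The main obstacle is the behavior at the triple corner and the extension of $\Lambda^{nd}$ from its interior description (graph of $d[\pm d_g]$) out to $\FF\cap\FL\cap\FR$. In Region 5 the coordinates $(s_1,s_2,t,Z)$ mix the left/right scales nontrivially, and one must verify directly from the eikonal equation $|d\varphi|_g^2=1$ that the shifted phase $\varphi+\log\rho_L+\log\rho_R$ is smooth in these coordinates; this is the technical heart of \cite{Chen-Hassell1}, and once granted, the remaining corner and rank statements are formal consequences of the explicit parametrization $\Psi$ and the symmetry $\sigma$.
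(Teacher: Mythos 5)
Your proposal is correct and follows essentially the same route as the paper, which offers no separate argument here: it simply cites the forward-bicharacteristic version from \cite{Chen-Hassell1} and asserts in a footnote that the statements of the proposition ``follow immediately.'' Your writeup supplies the details the paper leaves implicit (the time-reversal symmetry exchanging $\BR_\pm$ and the joint flowout parametrization across $N^*\diagz$); the only slip is notational --- it is $d({}^\Phi\pi\circ\Psi)$, not $d\Psi$ itself, that collapses the $S^n$-fibre at $t=0$.
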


In terms of these Lagrangian submanifolds we determined the semiclassical nature of the resolvent kernel in \cite[Theorem 38]{Chen-Hassell1}. In view of Stone's formula, \eqref{Stone},  this has the (almost) immediate consequence for the spectral measure:

\begin{theorem}\label{thm:specmeas} Let $(X^\circ, g)$ be an asymptotically hyperbolic non-trapping manifold, with no resonance at the bottom of the continuous spectrum. Then the spectral measure $\spmeas$ with $\blambda = 1/h$ can be expressed as a sum of the following terms:
\begin{itemize}
\item[(i)] A semiclassical Lagrangian distribution
in  $(\rho_L \rho_R)^{n/2} I^{-1/2}(X^2_0, \Lambda^{nd}; \Omegazh)$, where $\Lambda^{nd}$ is as in Proposition~\ref{prop:flowout-structure}.
\item[(ii)] an element of
$$(\rho_L \rho_R)^{n/2-i/h} h^\infty C^\infty(X^2_0\times [0, h_0]; \Omegazh) + (\rho_L \rho_R)^{n/2+i/h} h^\infty C^\infty(X^2_0\times [0, h_0]; \Omegazh),$$
which can be regarded as an element of type (i) of order $-\infty$.
\item[(iii)] a kernel lying in $$(x x')^{n/2-i/h} I^{-1/2}(X^2, \tL_{+}^*, \Omegazh) + (x x')^{n/2+i/h} I^{-1/2}(X^2, \tL_-^*, \Omegazh),$$ also associated to the bicharacteristic flowout, as above, but living on $X^2$ rather than $X^2_0$;
\item[(iv)] an element of
$$(xx')^{n/2-i/h} h^\infty C^\infty(X^2\times [0, h_0]; \Omegazh) + (xx')^{n/2+i/h} h^\infty C^\infty(X^2\times [0, h_0]; \Omegazh),$$
which can be regarded as an element of type (iii) of order $-\infty$.
\end{itemize}
 \end{theorem}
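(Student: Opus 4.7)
The plan is to derive Theorem~\ref{thm:specmeas} as a direct consequence of Stone's formula \eqref{Stone2} together with the detailed description of the resolvent $R(\blambda \pm i0) = (\Delta - n^2/4 - \blambda^2 \mp i0)^{-1}$ proved in \cite[Theorem 38]{Chen-Hassell1}. Recall that at high energies, that theorem describes the outgoing resolvent $R(\blambda + i0)$ as a sum of four pieces of types analogous to (i)--(iv), but associated to the forward halves $\Lambda^{nd}_+$ of $\Lambda^{nd}$ and $\tL^*_+$ of $\tL^*$; and dually for the incoming resolvent $R(\blambda - i0)$ on the backward halves $\Lambda^{nd}_-$ and $\tL^*_-$. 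My proof would take the difference of these two descriptions term by term and multiply by the prefactor $\blambda/(\pi i)$ from Stone's formula, absorbing the power of $\blambda=1/h$ into the amplitudes (which raises the semiclassical order by one).

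The crucial observation is that the conormal singularity at the lifted diagonal $\diagz$ cancels in the difference. Indeed, $R(\blambda + i0) - R(\blambda - i0)$ solves the homogeneous equation $(\Delta - n^2/4 - \blambda^2)u = 0$ in each variable, so it has no conormal singularity at $\diagz$; equivalently, the principal symbols of the two resolvents as $0$-pseudodifferential operators agree, so their difference is smooth across $\diagz$. What remains of the Lagrangian pieces on $X^2_0$ is a distribution associated to the full flowout $\Lambda^{nd}_+ \cup \Lambda^{nd}_-$. By Proposition~\ref{prop:flowout-structure}, after the shift $T_\pm$ these two halves fit together as a smooth Lagrangian $\tL^{nd}$ with codimension-three corners, and the transport equation for the parametrix symbol propagates from common initial data at $N^*\diagz$ in both the forward and backward directions, so the principal symbols of the two halves match smoothly across $N^*\diagz$. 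Hence the difference defines a semiclassical Lagrangian distribution of order $-1/2$ associated to $\Lambda^{nd}$, multiplied by the boundary weight $(\rho_L \rho_R)^{n/2}$ coming from the resolvent's asymptotic behaviour at $\FL \cup \FR$; this is precisely term (i). The $h^\infty$ residual pieces in the resolvent, multiplied by $\blambda$, remain $h^\infty$ and give term (ii) after observing that the two $\pm i/h$ factors come from the outgoing and incoming boundary values respectively.

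Away from the front face, the flowout pieces of $R(\blambda \pm i0)$ live on $\tL^*_\pm \subset T^* X^2$ and carry the oscillatory-exponential weight $(xx')^{n/2 \pm i/h}$ determined by the radial asymptotics at $\FL$ and $\FR$ (this is the \emph{outgoing} exponent for the outgoing resolvent, and the \emph{incoming} exponent for the incoming resolvent). These do not cancel in the difference; instead they assemble into a Lagrangian distribution on $\tL^*_+ \cup \tL^*_-$ with two distinct oscillatory factors, giving exactly item (iii), with the $h^\infty$ part giving item (iv). The main technical obstacle I anticipate is bookkeeping the smooth matching of the principal symbols across $N^*\diagz$ uniformly down to the boundary $\FF$, where the flowout Lagrangian, the diagonal, and the front face all meet; this requires checking that the shift \eqref{tL1} by $\mp d(\log \rho_L \rho_R)$ interacts compatibly with the Lagrangian structure near the triple stratum, which is essentially encoded in the second and third bullets of Proposition~\ref{prop:flowout-structure}. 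Once this geometric picture is in hand, the proof reduces to a symbolic computation already contained in \cite{Chen-Hassell1}.
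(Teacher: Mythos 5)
Your proposal follows essentially the same route as the paper: Stone's formula applied term by term to the resolvent structure of \cite[Theorem 38]{Chen-Hassell1}, with the semiclassical order shift accounted for in the same way, the diagonal (pseudodifferential) part cancelling in the outgoing-minus-incoming difference because the spectral measure solves the homogeneous equation, and the two flowout halves joining into a single Lagrangian across $N^* \diagz$. The one caveat is that your alternative justification via agreement of the principal symbols alone would only lower the order by one rather than give full smoothness across $\diagz$; the paper's rigorous version of your argument is semiclassical elliptic regularity off the characteristic variety of $h^2\Delta - 1$ (which removes all of $N^*\diagz$ except its intersection with $\BR$) combined with propagation of Lagrangian regularity through $\BR \cap N^*\diagz$, which plays the role of your symbol-matching across the two halves.
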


 \begin{proof} We first remark that the change in order from $+1/2$ for the resolvent in \cite[Theorem 38]{Chen-Hassell1} to $-1/2$ for the spectral measure is simply due to the fact that the semiclassical resolvent in \cite{Chen-Hassell1} is $h^{-2}$ times the resolvent in \eqref{Stone2}, together with the factor of $\blambda = h^{-1}$ in \eqref{Stone2}.

 In \cite[Theorem 38]{Chen-Hassell1} it was shown that the resolvent kernel has a similar, but slightly more complicated structure: in place of the first term above, it consists of a semiclassical pseudodifferential operator, together with a semiclassical intersecting Lagrangian distribution associated to $N^* \diagz$ together with the forward/backward half of the bicharacteristic relation (for the outgoing/incoming resolvent). We claim that when the incoming resolvent is subtracted from the outgoing, the pseudodifferential part cancels, and what is left is a Lagrangian distribution associated to the full bicharacteristic relation. This follows since the spectral measure satisfies an elliptic equation
 $$
 (h^2 \Delta - h^2 n^2/4 - 1) \spmeas = 0, \quad h = \blambda^{-1}.
 $$
 Therefore, the spectral measure can have no semiclassical wavefront set outside the zero set of the symbol of $h^2 \Delta - 1$. This excludes all of $N^* \diagz$ except for that part contained in $\BR$. In addition, propagation of Lagrangian regularity\footnote{Propagation of Lagrangian regularity is the statement that, if $P$ is an operator of real principal type, $Pu = O(h^\infty)$, and $u$ is a Lagrangian distribution microlocally in some region $V$ of phase space, then $u$ is also Lagrangian along the bicharacteristics passing through $V$. It follows from the parametrix construction for Lagrangian solutions of operators of real principal type, and the propagation of singularities theorem.} shows that the spectral measure is a Lagrangian distribution across $N^* \diagz$ (given that we already know that it is Lagrangian on both sides of $N^* \diagz$ corresponding to forward and backward flowout, and given that the Hamilton vector field of the symbol does not vanish at $\BR \cap N^* \diagz$). This concludes the proof.
 \end{proof}

 \subsection{The distance function on $X^2_0$} The distance function on $X^2_0$ satisfies  
 
 \begin{proposition}\label{prop:dist}
On $X^2_0$, the Riemannian distance function $d(z, z')$ is given by
$$
d(z, z') = -\log(\rho_L \rho_R) + b(z, z'),
$$
where $b(z, z')$ is uniformly bounded on $X^2_0$.
\end{proposition}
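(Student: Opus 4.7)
The plan is a region-by-region analysis on the compact manifold with corners $X^2_0$, verifying that the function $b(z,z') := d(z,z') + \log(\rho_L \rho_R)$ is uniformly bounded. Since $b$ is continuous on the interior of $X^2_0$ (with a smooth limit across the lifted diagonal $\diagz$, where $d \to 0$ while $\rho_L, \rho_R$ stay bounded), it suffices to bound $b$ in a neighborhood of each boundary point of $X^2_0$ lying in $\FL \cup \FR \cup \FF$.

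The core computation is in hyperbolic space $\HH^{n+1}$ in the half-space model, where
$$
\cosh d\bigl((x,y),(x',y')\bigr) = 1 + \frac{(x-x')^2 + |y-y'|^2}{2xx'}.
$$
From this one verifies the claim directly in each coordinate chart on $X^2_0$. In Region 3 (near $\FL \cap \FR$ but away from $\FF$), $|y-y'|$ is bounded away from zero, so $d + \log(xx') = \log\bigl((x-x')^2 + |y-y'|^2\bigr) + O(1)$ is bounded (using compactness of $\partial X$). In Region 4a (near $\FF$ but away from $\FR$), the coordinates $s = x/x'$, $x'$, $y$, $Y = (y'-y)/x'$ reduce the formula to $\cosh d = (s^2 + 1 + |Y|^2)/(2s)$, so $d + \log s$ is bounded for bounded $Y$. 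Region 4b is symmetric, and the same rescaling argument works at the triple corner (Region 5) in the coordinates $(s_1, s_2, t, Z)$, yielding $d + \log(s_1 s_2) = O(1)$. Since $\rho_L \rho_R$ equals $xx'$ (resp.\ $s$, resp.\ $s_1 s_2$) up to a smooth positive factor in each chart, this establishes the claim for the hyperbolic model.

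For a general asymptotically hyperbolic metric, I would pass from the model to the true distance by comparing $g$ with a reference hyperbolic-type metric near each boundary point. In the normal form $g = (dx^2 + h(x,y,dy))/x^2$, one takes $g_0 := (dx^2 + h(0,y,dy))/x^2$ as the reference; the family $h(x,\cdot)$ differs from $h(0,\cdot)$ by $O(x)$, so $g$ and $g_0$ are $(1 + O(x))$-quasi-isometric in the collar. To upgrade this quasi-isometry to an additive comparison of distances, one uses the observation that along a minimizing geodesic between two nearly-boundary points, $x$ decays exponentially away from its maximum along $\gamma$, so that $\int_\gamma x \, ds$ is uniformly bounded. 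This yields $|L_g(\gamma) - L_{g_0}(\gamma)| \leq C$, and combined with the symmetric estimate for $g_0$-geodesics measured in $g$, gives $|d_g - d_{g_0}| \leq C$ uniformly on $X^2_0$. Matching endpoint asymptotics and absorbing global travel in the compact interior $\{x \geq 1/2\}$ (a bounded cost since $X$ is compact) then yields $b = O(1)$.

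The main obstacle is this upgrading step: converting a pointwise (multiplicative) comparison of the metrics into an additive bound on the distance functions. One has to verify that the minimizing $g$-geodesic remains within a region where the normal form applies (or transitions cleanly between collar charts), and to control $\int_\gamma x \, ds$ without a priori explicit knowledge of $\gamma$. The nontrapping hypothesis, together with the asymptotic hyperbolicity that forces $x$ to decay exponentially along any escaping geodesic in analogy with the hyperbolic model, together make this feasible globally on $X^2_0$.
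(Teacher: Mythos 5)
Your region-by-region model computations are correct, and the overall strategy (explicit distance formula for the model, then perturbation to the general metric) is legitimate, but the step you yourself flag as ``the main obstacle'' is exactly where the real work lies, and it is left unproved. The assertion that $x$ decays exponentially away from its maximum along a minimizing $g$-geodesic, so that $\int_\gamma x\,ds = O(1)$, is not automatic ``in analogy with the hyperbolic model'' for a perturbed metric: it requires writing the geodesic system in the $0$-cotangent variables $(x,y,\lambda,\mu)$ with the $O(x)$ error terms, deriving the differential inequality $\dot\lambda \le -\alpha(1-\lambda^2)$ with $\alpha = 1+O(\epsilon)$, and integrating it to get $x(t) \le x_{\max}\bigl(2e^{\alpha t}/(1+e^{2\alpha t})\bigr)^{1/\alpha}$. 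This is precisely the computation the paper carries out (there, in order to show that a geodesic confined to $\{x\le\epsilon\}$ can move $y$ by at most about $2x_{\max}$, hence must enter the compact core if $d(y,y')\ge 4\epsilon$). Without it, neither your bound on $\int_\gamma x\,ds$ nor your claim that minimizing geodesics between near-boundary points with nearby $y$-coordinates stay inside the collar has any support. A second, smaller gap: your reference metric $g_0=(dx^2+h(0,y,dy))/x^2$ is not hyperbolic space unless $h(0)$ is flat, so the $\cosh$ formula does not apply to it; you need a further freezing of $h(0)$ at a boundary point, and the resulting error in the metric is $O(|y-y_0|)$ rather than $O(x)$, so you must check separately that it is harmless (it is near $\FF$, where $|y-y'|=O(x')$, but this has to be said).

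For comparison, the paper's route is structurally simpler away from the front face: it fixes the compact core $K=\{x\ge\epsilon\}$ of diameter $M$ and shows that whenever the two points are not both close to the same boundary fibre, any connecting geodesic must pass through $K$ (this is the only place the ODE analysis is used), whence $d$ equals the sum of the two distances to $\partial K$ up to $O(M)$, i.e. $-\log x-\log x'+O(1)$, by a two-sided triangle-inequality argument requiring no metric comparison at all. Near the front face it does not redo the model computation but quotes the companion paper's result that $d$ parametrizes the Lagrangian $\Lambda^{nd}$ and equals $-\log(\rho_L\rho_R)$ plus a function smooth on $X^2_0$. If you wish to keep your perturbative route, you must supply the geodesic-equation estimate; once you do, your argument and the paper's essentially coincide in content.
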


\begin{remark} The result in the case that $(X^\circ, g)$ is a small perturbation of $(\HH^{n+1}, g_{hyp})$ was shown by Melrose, S\'{a} Barreto and Vasy \cite[Section 2]{Melrose-Sa Barreto-Vasy}. 
\end{remark}

\begin{proof}
 Consider two points $p, p' \in X^\circ$. When $(p,p')$ are in a sufficiently small neighbourhood $U$ of the front face $\FF$, say $p = (x, y)$, $p' = (x', y')$ with $x, x' < \epsilon$
 and $d(y, y') < 4\epsilon$ (taken with respect to the metric $h(0)$ at the boundary),
 then the distance function parametrizes the Lagrangian $\Lambda^{nd}$, and it follows from \cite[Proposition 20]{Chen-Hassell1} that this takes the form $-\log(\rho_L \rho_R) + C^\infty(X^2_0)$ in a neighbourhood of $\FF$.

Define $K \subset X^\circ$ to be the compact set $\{ x \geq \epsilon \}$. Let $M $ be the diameter of $K$, that is, the maximum distance between two points of $K$.

Now suppose that $(p, p') \notin U$. In the complement of $U$, we can take $\rho_L = x$ and $\rho_R = x'$.

If both $p$ and $p'$ lie in $K$, then the distance between $p$ and $p'$ is at most $M$, hence $|d(p, p') + \log (\rho_L \rho_R)| \leq M + 2\max_K |\log x| = O(1)$.

If one point, say $p$, lies in $K$ and $p'$ is not in $K$, then a lower bound on $d(p,p')$ is the distance from $p'$ to the boundary of $K$, which is exactly $\log \epsilon - \log x = -\log (\rho_L \rho_R) + O(1)$. On the other hand, an upper bound is the length of the path from $p'$ to the closest point $p''$ on $\partial K$, plus the distance from $p''$ to $p$. This is at most $\log \epsilon - \log x +M = -\log (\rho_L \rho_R) + O(1)$.

If neither point lies in $K$, then write $p = (x, y)$ and $p' = (x', y')$.
Due to the definition of $U$, we must have $d(y, y') \geq 4\epsilon$. We claim that any geodesic between $p$ and $p'$ must enter $K$. It follows from this claim that a lower bound on the distance between $p$ and $p'$ is the distance from $p$ to $\partial K$ plus the distance between $p'$ to $\partial K$, which is $-\log x - \log x' + 2\log \epsilon$, that is, $-\log (\rho_L \rho_R) + O(1)$. Also, an upper bound on the distance is clearly $-\log x - \log x' + 2\log \epsilon + M$ which is also $-\log (\rho_L \rho_R) + O(1)$. Thus, to complete the proof, it remains to establish the claim above.

Consider any geodesic that lies wholly within the region $x \leq \epsilon$. Parametrize the geodesic with arc length, such that the value of $x$ is maximal at $t=0$ --- say, equal to $x_{max} \leq \epsilon$. We recall the geodesic equations for $(x, y, \lambda, \mu)$ where these are the $0$-cotangent variables as described in \cite[Section 2]{Chen-Hassell1}:
\begin{equation}
\left\{ \begin{array}{ccl}
\dot{x} &=& x \lambda  \\
\dot{y_i} & = & x h^{ij} \mu_j  \\
\dot{\lambda} & = & -\Big(h^{ij} +  \frac1{2}x \partial_x h^{ij}  \Big) \mu_i \mu_j    \\
\dot{\mu_i} & = & \Big( \lambda\mu_i - \frac1{2}(x \partial_{y_i}  h^{jk}) \mu_j \mu_k \Big) \end{array} \right..
\label{ge}\end{equation}
We also recall that $\lambda^2 + |\mu|^2 = 1$ along the geodesic, where $|\mu|^2 = h^{ij}(x, y)\mu_i \mu_j$. We see that
$$
\dot \lambda = -|\mu|^2 (1 + O(x)) = -(1-\lambda^2)(1 + O(x)).
$$
Thus, we have
\begin{equation}
\dot \lambda \leq -\alpha(1 - \lambda^2), \quad \lambda(0) = 0.
\label{lambda-eqn}\end{equation}
for some $\alpha \sim 1 + O(\epsilon)$ slightly less than $1$, which can be taken as close as desired to $1$ by choosing $\epsilon$ sufficiently small. The initial condition $\lambda(0) = 0$ arises as $\dot x = 0$ at $t=0$.

We can integrate the differential inequality \eqref{lambda-eqn} to obtain
$$
\frac1{2} \int \Big( \frac1{1 + \lambda} + \frac1{1 - \lambda} \Big) \, d\lambda \leq - \alpha \int dt,
$$
which yields
$$
\lambda(t) \leq - \frac{1 - e^{-2\alpha t}}{1 + e^{-2\alpha t}}.
$$
Plugging this into the equation for $x$, we find that
$$
\dot x \leq -x \frac{1 - e^{-2\alpha t}}{1 + e^{-2\alpha t}}.
$$
Integrating this, we find that
$$
\log x \leq - \int \frac{1 - e^{-2 \alpha t}}{1 + e^{2\alpha t}}  e^{2\alpha t} \, dt,
$$
and with the help of the substitution $v = e^{2\alpha t}$, we obtain
$$
x \leq x_{max} \Big( \frac{2 e^{\alpha t}}{1 + e^{2\alpha t}} \Big)^{1/\alpha}.
$$
Finally we turn to the equation for $y$. We have
$$
|\dot y| = x |\mu| = x \sqrt{1 - \lambda^2} \leq x_{max} \Big( \frac{2 e^{\alpha t}}{1 + e^{2\alpha t}} \Big)^{1+1/\alpha}.
$$
Integrating the RHS from $0$ to $\infty$ at the value $\alpha = 1$ gives $x_{max}$. We get the same result for negative time, so that means that, along this geodesic, the maximum distance that $y$ can travel, with respect to the $h(0)$ metric, is
$$
2x_{max} \int_0^\infty \Big( \frac{2 e^{\alpha t}}{1 + e^{2\alpha t}} \Big)^{1+1/\alpha} \, dt.
$$
This is equal to $2x_{max}$ when $\alpha = 1$, and  depends continuously on $\alpha$, hence is close to
$2x_{max}$ for $\alpha$ close to $1$, that is, when $\epsilon$ is sufficiently small\footnote{As a check, we note that for the hyperbolic metric on the upper half space, where the geodesics are great circles on planes perpendicular to the boundary and centred on the boundary, the maximum distance is indeed $2 x_{max}$.}. It follows that if $d(y, y') \geq 4\epsilon$, the geodesic between $p$ and $p'$ must enter the region $\{ x \geq \epsilon \}$ (provided $\epsilon$ is sufficiently small). This completes the proof of the proposition.
\end{proof}

\section{Low energy behaviour of the spectral measure}\label{sec:le}

Pointwise bounds on the spectral measure, and restriction estimates, are readily deduced from the regularity statement \eqref{leres} for the low energy resolvent.

\subsection{Pointwise bounds on the spectral measure}

The regularity statement \eqref{leres} for the resolvent, together with Stones's formula \eqref{Stone2}, implies that the Schwartz kernel of the low energy spectral measure $\spmeas$ takes the form
\begin{equation}
\blambda \Big( (\rho_L \rho_R)^{n/2 + i\blambda} a(\blambda) -
(\rho_L \rho_R)^{n/2 - i\blambda} a(-\blambda) \Big),
\end{equation}
where $a(\blambda)$ is a $C^\infty$ function on $X^2_0$ depending
holomorphically on $\blambda$ for small $\blambda$. Here we use our
assumption that the resolvent is holomorphic in a neighbourhood of
$n^2/4$, the bottom of the essential spectrum; on the other hand,
the nontrapping assumption is irrelevant here.

We write the RHS as
\begin{equation}\begin{gathered}
\blambda \Big((\rho_L \rho_R)^{n/2 + i\blambda} - (\rho_L
\rho_R)^{n/2 - i\blambda}\Big) a(0) \\ + \blambda \Big( (\rho_L
\rho_R)^{n/2 + i\blambda} \big(a(\blambda) - a(0) \big) - (\rho_L
\rho_R)^{n/2 - i\blambda} \big( a(-\blambda) - a(0) \big)  \Big),
\end{gathered}\end{equation}
which implies that the kernel is bounded pointwise by
$$ C \blambda (\rho_L \rho_R)^{n/2} \big| \sin (\blambda \log(\rho_L \rho_R)) \big| + C' \blambda^2  (\rho_L \rho_R)^{n/2}.
$$
Using Proposition~\ref{prop:dist} we may write $|\log (\rho_L \rho_R)| = d(z, z') + O(1)$. Then estimating  the sine factor by  $|\sin s \,| \leq |s| (1+|s|)^{-1}$, we obtain the low energy ($\blambda \leq 1$) estimate in Theorem~\ref{thm:kernelbounds}.

\subsection{Restriction estimate}

We have just seen that the spectral measure for low energy,
$\blambda \leq 1$, is bounded pointwise by $\blambda^2$ times $-\log
(\rho_L \rho_R) (\rho_L \rho_R)^{n/2}$. Thus, to prove the low
energy restriction estimate, it suffices to show that an integral
operator, say $A(z,z')$, with kernel bounded pointwise by $-\log
(\rho_L \rho_R) (\rho_L \rho_R)^{n/2}$ maps $L^p(X)$ to $L^{p'}(X)$
for all $p \in [1, 2)$.

To do this, we break up the kernel $A(z,z')$ into pieces. Let $U$ be a neighbourhood of the front face $\FF$ in $X^2_0$. We consider $A(z,z') 1_U$ and $A(z,z') 1_{X^2_0 \setminus U}$ separately.

First consider $A(z,z') 1_{X^2_0 \setminus U}$. In this region, we may take $\rho_L = x$ and $\rho_R = x'$.  This part of the kernel is therefore bounded by $C(-\log x) x^{n/2} (-\log x') {x'}^{n/2}$. Thus, it is easy to check that $A(z,z') 1_{X^2_0 \setminus U}$ is in $L^{p'}(X \times X)$, for any $p' > 2$. It therefore maps $L^p(X)$ to $L^{p'}(X)$ for all $p \in [1, 2)$.

Now consider the remainder of the kernel, $A(z,z') 1_U$. We may further decompose the set $U$ into subsets $U_i$, where on each $U_i$, we have $x \leq \epsilon, x' \leq \epsilon$ and $d(y, y_i), d(y', y_i) \leq \epsilon$ for some $y_i \in \partial X$ (where the distance is measured with respect to the metric $h(0)$ on $\partial X$). Choose local coordinates $(x, y)$ on $X$, centred at $(0, y_i) \in \partial X$, covering the set $ V_i = \{ x \leq \epsilon, d(y, y_i) \leq \epsilon \}$, and use these local coordinates to define a map $\phi_i$ from $V_i$ to a neighbourhood $V_i'$ of $(0,0)$ in hyperbolic space $\HH^{n+1}$ using the upper half-space model (such that the map is the identity in the given coordinates).

The map $\phi_i$ induces a diffeomorphism $\Phi_i$ from $U_i \subset X^2_0$ to a subset of $(B^{n+1})^2_0$, the double space for $\HH^{n+1}$, covering the set $x \leq \epsilon, x' \leq \epsilon, |y|, |y'| \leq \epsilon$ in this space. Clearly, this map identifies $\rho_L$ and $\rho_R$ on $U_i$ with corresponding boundary defining functions for the left face and right face on $(B^{n+1})^2_0$. We now consider the kernel
\begin{equation}
\phi_i \circ A 1_{U_i} \circ \phi_i^{-1}
\label{transfer}\end{equation} as an integral operator on
$(B^{n+1})^2_0$. This kernel is bounded by $(1+r) e^{-nr/2}$, where
$r$ is the geodesic distance on $\HH^{n+1}$, since $(1+r)$ is
comparable to $-\log(\rho_L \rho_R)$ on $(B^{n+1})^2_0$. Therefore,
using \eqref{KS}, \eqref{transfer} is bounded from $L^p(\HH^{n+1})$
to $L^{p'}(\HH^{n+1})$ for every $p \in [1, 2)$. It is clear that
$\phi_i$ are bounded, invertible maps from $L^p(V_i)$ to
$L^p(V'_i)$. This shows that the kernel $A 1_{U_i}$ is bounded from
$L^p(V_i)$ to $L^{p'}(V_i)$ for all $p \in [1, 2)$. This completes
the proof of Theorem~\ref{thm:restriction} in the case of low
energy, $\blambda \leq 1$.

\section{Pseudodifferential operator microlocalization}\label{sec:wf}

According to Theorem~\ref{thm:specmeas},
the spectral measure is a Lagrangian distribution associated to the
Lagrangian submanifold $\Lambda^{nd}$ (on ${}^\Phi T^*X^2_0$) and to the Lagrangian submanifold $\Lambda^*$ (on ${}^0 T^* X^2$).  We first define the notion of microlocal support, which is a  closed subset of ${}^\Phi T^* X^2_0$ giving the essential support `in phase space', for such distributions. It is a special case of the notion of semiclassical wavefront set, defined for example in \cite[Section 8.4]{Zworski}.
We consider a local oscillatory integral expression for $u \in I^m(\Lambda)$, where $\Lambda$ is a Lagrangian submanifold of ${}^\Phi T^* X^2_0$. This is given by a local expression
\begin{equation}
u = h^{-m - (n+1)/2 - k/2} \int e^{i\phi(Z, v)/h} a(Z, v, h) \, dv +O(h^\infty),
\label{localpar}\end{equation}
where $v \in \RR^k$, with $a$ smooth, and we use $Z$ for local coordinates on $X^2_0$, as described explicitly in Regions 1--5 in Section~\ref{sec:ahm}. This requires that $\phi$ locally parametrizes $\Lambda$ (nondegenerately), i.e. the map $\iota$  from $C_\phi$,
$$
C_\phi = \{ (Z, v) \mid d_v \phi(Z, v) = 0 \}
$$
to $\Lambda$, given by
$$
C_\phi \ni (Z, v) \mapsto \iota (Z, v) := (Z, d_Z \phi(Z, v)) \in \Lambda,
$$
is a local diffeomorphism. The microlocal support $\WF(u)$ of \eqref{localpar} is then contained in $\Lambda$ (in general it can be any ), and is determined by the support of the amplitude $a$:
\begin{equation}
\WF(u) = \{ q \in \Lambda \mid a(Z, v, h) \text{ is not $O(h^\infty)$ in a neighbourhood of } (Z, v, 0), \text{ where } \iota(Z, v) = q \}.
\end{equation}
It depends only on $u$, not the particular form of \eqref{localpar}. 

We also recall that the Schwartz kernel of a semiclassical $0$-pseudodifferential operator of order $(0, k)$ (the first index is the semiclassical order, the second the differential order) takes the form
\begin{equation}
h^{-(n+1)} \int e^{i(z - z') \cdot \zeta/h} b(z, \zeta) \, d\zeta
\label{psdo-int}\end{equation}
(where $b$ is a symbol of order $k$ in $\zeta$) near the diagonal and away from the boundary of $X^2_0$, and
\begin{equation}
A = h^{-(n + 1)} \int_{\mathbb{R}^{n + 1}} e^{i \big((x^{\prime\prime} - x) \lambda^{\prime\prime} + i(y^{\prime} - y) \cdot \mu^{\prime\prime} \big)/ (h x^{\prime\prime})} a(x^{\prime\prime}, y^{\prime\prime}, \lambda^{\prime\prime}, \mu^{\prime\prime}) \, d\lambda^{\prime\prime} d\mu^{\prime\prime}
\label{psdo-bdy}\end{equation}
(where $a$ is a symbol of order $k$ in $(\lambda'', \mu'')$) near the boundary of the diagonal in $X^2_0$; away from the diagonal, the kernel is smooth and $O(h^\infty \rho_L^\infty \rho_R^\infty)$.

We wish to show that by composing with pseudodifferential operators acting on $X$, we can localize the microlocal support of $u \in I^m(\Lambda)$.
More precisely, we shall establish

\begin{proposition}\label{prop:microsupport} Suppose that $\Lambda$ is a smooth Lagrangian submanifold in ${}^\Phi T^* X^2_0$, and let $U \in  I^{m}(\Lambda)$ and $A \in {}^0\Psi^{0, 0}(X)$. Then $AU \in I^m(\Lambda)$ and  we have
\begin{equation}\begin{gathered}
\WF(A U) \subset \pi_L^{-1} \big(\WF(A)\big) \cap \WF (U), \\
\WF(U A) \subset \pi_R^{-1} \big(\WF(A)\big) \cap \WF (U). \\
\end{gathered}\label{microsupport}\end{equation}
 Here  $\pi_L$, $\pi_R$ is the left, resp. right projection from ${}^\Phi T^* X^2_0 \rightarrow {}^0 T^* X$, that is, the composite map
$$
{}^\Phi T^* X^2_0 \to \big( {}^0 T^* X \big)^2 \to {}^0 T^* X
$$
where the first map is induced by the blow-down map $\beta : X^2_0 \to X^2$, and the second is the left, resp. right projection.
\end{proposition}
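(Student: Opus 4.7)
The plan is to reduce both inclusions in \eqref{microsupport} to a local stationary-phase calculation. By microlocality and the fact that, off the diagonal in $X^2$, the kernel of $A \in {}^0\Psi^{0,0}(X)$ is smooth and rapidly vanishing in $h$, $\rho_L$, $\rho_R$, we may replace $A$ by one of the model forms \eqref{psdo-int}--\eqref{psdo-bdy} and $U$ by a local parametrization \eqref{localpar} relative to a coordinate chart on $X^2_0$ in which $\Lambda$ is parametrized by $\phi$. Treating first the boundary case (the interior case is strictly easier), we write
$$
(AU)(z, z') = h^{-\alpha}\iiint e^{i\Psi/h}\, a(z'',\lambda'',\mu'')\, u(z'',z',v,h)\, dv\, d\lambda''\, d\mu''\, dz'',
$$
where $\alpha$ is the appropriate exponent and
$$
\Psi(z,z',z'',v,\lambda'',\mu'') = \frac{(x''-x)\lambda'' + (y''-y)\cdot\mu''}{x''} + \phi(z'',z',v),
$$
and apply the method of stationary phase in the $2(n+1)$ variables $(z'', \lambda'', \mu'')$, treating $(z, z', v)$ as parameters.

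The equations $\partial_{\lambda''}\Psi = \partial_{\mu''}\Psi = 0$ force $z'' = z$, while $\partial_{z''}\Psi = 0$ identifies the fibre variable $(\lambda'', \mu'')$ with the $0$-cotangent vector $d_{z''}\phi(z,z',v)$ coming from the parametrization of $\Lambda$. Thus the critical set maps bijectively to those points $q \in \Lambda$ whose left projection $\pi_L(q)$ lies in the microsupport of $A$. The Hessian of $\Psi$ in $(z'', \lambda'', \mu'')$ has block antidiagonal structure with the mixed block equal, at the critical set, to $\diag(1/x, 1/x, \dots, 1/x)$, hence it is non-degenerate uniformly up to $\FF$. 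Stationary phase therefore produces an oscillatory integral whose phase is $\phi$ restricted to the critical set, which is a non-degenerate parametrization of the same Lagrangian $\Lambda$. Bookkeeping the $h$-powers and the $2(n+1)$ variables absorbed shows that the order is unchanged, giving $AU \in I^m(\Lambda)$. The microsupport bound $\WF(AU) \subset \pi_L^{-1}(\WF A) \cap \WF U$ is then a direct consequence of the remainder estimate in stationary phase: the reduced amplitude at $q\in\Lambda$ fails to be $O(h^\infty)$ only when both $a$ and $u$ are non-trivial at the corresponding critical point.

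The composition $UA$ is handled by the same argument applied to the formal transpose, giving the second inclusion. The main technical obstacle I anticipate is verifying that the stationary phase reduction is valid \emph{uniformly} up to all boundary faces of $X^2_0$, in particular up to $\FF$ where the $0$-geometry is degenerate relative to the standard one. This is precisely the point where the choice of the $0$-cotangent coframe $\{dx/x, dy_i/x\}$ enters: in these coordinates the phase $\Psi_A$ is smooth up to the boundary, the mixed Hessian block remains invertible uniformly to the front face, and the bundle ${}^\Phi T^* X^2_0$ provides the natural setting for $\Lambda$ and for the projections $\pi_L$, $\pi_R$.
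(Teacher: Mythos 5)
Your overall strategy --- localize, write both factors as oscillatory integrals, eliminate the intermediate variables by stationary phase, and read off the Lagrangian and the microsupport from the critical set --- is exactly the paper's. However, there is a genuine gap at the step you yourself flag as the main obstacle. You propose to apply stationary phase directly in the raw variables $(z'',\lambda'',\mu'')$ with phase containing $\bigl((x''-x)\lambda'' + (y''-y)\cdot\mu''\bigr)/x''$, and you assert that the mixed Hessian block $\mathrm{diag}(1/x,\dots,1/x)$ is ``non-degenerate uniformly up to $\FF$.'' It is not: while the block is invertible for each fixed $x>0$, both the phase and its Hessian blow up as $x''\to 0$, so the stationary phase expansion carries no uniform remainder estimate as one approaches the front face --- and uniformity is the entire content of the boundary case. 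The paper resolves this by first changing the \emph{integration} variables to $s''=x''/x$, $Y''=(y-y'')/x$ (and the analogous rescalings in the regions near the triple corner), after which the phase becomes $(1-s'')\lambda + Y''\cdot\mu + \phi(xs'', y-xY'',z',v)\pm\log(xs'')$, which is smooth with uniformly bounded derivatives, and the Hessian in $(s'',\lambda,Y'',\mu)$ has exact identity off-diagonal blocks with only $O(x)$ corrections elsewhere, hence is uniformly non-degenerate. Appealing to the $0$-coframe $\{dx/x, dy_i/x\}$ alone does not substitute for this explicit rescaling of the variables being integrated out.

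A second, smaller omission: near the boundary the local oscillatory representation of $U\in I^m(\Lambda)$ carries the additional singular phase $\pm\log x$ (respectively $\pm\log s$ or $\pm\log(s_1s_2)$ in the front-face regions), reflecting the factors $(\rho_L\rho_R)^{n/2\pm i/h}$. This shifts the critical point to $\lambda = \pm 1 + x\,d_x\phi$, $\mu = x\,d_y\phi$, rather than the unshifted $d_{z''}\phi$ you write; the identification of the critical set with $\pi_L^{-1}(\WF(A))\cap\WF(U)$ must be made with these shifted fibre values. With the rescaling carried out and the log terms tracked, your argument closes and coincides with the paper's region-by-region proof; the reduction of the $UA$ inclusion to the $AU$ inclusion by exchanging left and right factors is also as in the paper.
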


\begin{proof} The second statement in \eqref{microsupport} follows from the first by switching the left and right variables. So we only prove the first. To do this, we write down local parametrizations of $U$, and check the statement \eqref{microsupport} on each. We use local coordinates valid in Regions 1--5 as described in Section~\ref{sec:ahm}.

$\bullet$ Region 1. In this region, $U$ has a local representation
$$
U = h^{-m-k/2-(n+1)/2} \int_{\RR^k} e^{i\phi(z, z', v)/h} b(z, z', v, h) \, dv,
$$
and $A$ has a representation \eqref{psdo-int}. The composition is given by an oscillatory integral
$$
h^{-m-k/2-3(n+1)/2} \int e^{i((z-z'') \cdot \zeta + \phi(z'', z', v))/h} a(z, \zeta) b(z'', z', v, h) \, dv \, d\zeta \, dz''.
$$
We perform stationary phase in the variables $(z'', \zeta)$. We note that the Hessian in these variables is non-degenerate, as the matrix of second derivatives takes the form
$$
\begin{pmatrix}
* & \Id \\
\Id & 0 \end{pmatrix}
$$
which has nonzero determinant, irrespective of the top left entry. The stationary phase expansion then shows that this expression can be simplified to
$$
h^{-m-k/2-(n+1)/2} \int e^{i\phi(z, z', v)/h} c(z, z', v, h) \, dv  + O(h^\infty), 
$$
where $c$ has an expansion
$$
c(z, z', v, h) = \sum_{j=0}^\infty h^j Q_j \big(a(z, \zeta) b(z'', z', v, h) \big) \Big|_{z'' = z, \zeta = -d_z \phi(z, z', v)}
$$
where $Q_j$ is a differential operator in the $(z'', \zeta)$ variables of degree $2j$.
This shows that $AU \in I^m(\Lambda)$ and has microlocal support contained in $\WF(U)$ (since the amplitude $c$ is $O(h^\infty)$ wherever $b = O(h^\infty)$). The microlocal support is also contained in the set
$$
\{ (z, z', v, h)   \mid (z, d_z \phi(z, z', v)) \in  \WF(A) \},
$$
which is to say that the microlocal support is contained in $\WF(U) \cap \pi_L^{-1} \WF(A)$.

$\bullet$ Region 2a. In this region, $U$ has a local representation
$$
U = h^{-m-k/2-(n+1)/2} \int e^{i(\phi(x, y, z', v) \pm \log x)/h} b(x, y, z', v, h) \, dv,
$$
and $A$ has a representation \eqref{psdo-bdy}. The composition is given by an oscillatory integral
\begin{multline*}
h^{-m-k/2-3(n+1)/2} \int e^{i\big( (x-x'')\lambda/x + (y-y'')\cdot \mu/x + \phi(x'', y'', z', v) \pm \log x'' \big)/h} a(x, y, \lambda, \mu) \\ \times  b(x'', y'', z', v, h) \, dv \, \frac{dx'' \, dy''}{{x''}^{n+1}} \, d\lambda \, d\mu.
\end{multline*}
We change to coordinates $s''=x''/x$ and $Y'' = (y-y'')/x$. In these coordinates we have
\begin{multline*}
h^{-m-k/2-3(n+1)/2} \int e^{i\big( (1-s'')\lambda +Y''\cdot \mu + \phi(x s'', y-xY'', z', v)\pm \log(xs'') \big)/h} a(x y, \lambda, \mu) \\ \times  b(x s'', y-xY'', z', v, h) \, dv \, \frac{ds'' \, dY''}{{s''}^{n+1}} \, d\lambda \, d\mu.
\end{multline*}
We then perform stationary phase in the variables $(s, \lambda, Y, \mu)$. There is a stationary point at
\begin{equation}
s'' = 1, Y'' = 0, \lambda = x d_x \phi \pm 1, \mu = x d_y \phi.
\label{spt}\end{equation}
We check that the Hessian in these variables is non-degenerate at this critical point.  The matrix of second derivatives takes the form
$$
\begin{pmatrix}
* & \Id & O(x) & 0 \\
\Id & 0 & 0 & 0 \\
O(x) & 0 & * & \Id \\
0 & 0 & \Id & 0
\end{pmatrix}
$$
which has nonzero determinant when $x$ is small, irrespective of the starred entries.  The stationary phase expansion then shows that this expression can be simplified to
$$
h^{-m-k/2-(n+1)/2} \int e^{i\phi(x,y, z', v) \pm \log x/h} c(x,y, z', v, h) \, dv  + O(h^\infty),
$$
where $c$ has an expansion
$$
c(x,y, z', v, h) = \sum_{j=0}^\infty h^j Q_j \big( a(x, y, \lambda, \mu) b(x' s'', y-xY'', z', v, h) \big)  \Big|_{x'' = x, y'' = y,  \lambda = \pm 1 + x d_x \phi, \mu = x d_y \phi}
$$
where $Q_j$ is a differential operator in $(s'', \lambda, Y'', \mu)$ of degree $2j$.
This shows that $AU \in I^m(\Lambda)$, and has microlocal support contained in $\WF(U)$ (since $c = O(h^\infty)$ wherever $b = O(h^\infty)$). The microlocal support is also contained in 
$$
\{ (x, y, z', v, h) \mid (x, y, \pm 1 + xd_x\phi(x, y, z', v), x d_y \phi(x,  y, z', v)) \in \WF(A) \},
$$
which (comparing with \eqref{spt}) shows that the microlocal support of $AU$ is also contained in $\pi_L^{-1} \WF(A)$.

$\bullet$ Region 2b. In this region, the calculation is similar to region 1, so we omit the details.

$\bullet$ Region 3. In this region, the calculation is similar to region 2a, so again we omit the details.

$\bullet$ Region 4a. Here we use the coordinates
$$
s = \frac{x}{x'}, \ x', \ y, \ Y = \frac{y' - y}{x'}.
$$
In this region, $U$ has a local representation
$$
U = h^* \int e^{i(\phi(s, x', y, Y, v) \pm \log s)/h} b(s, x', y, Y , v, h) \, dv,
$$
and $A$ has a representation \eqref{psdo-bdy}. The composition is given by an oscillatory integral
\begin{multline*}
h^{-m-k/2-3(n+1)/2} \int e^{i\big( (x-x'')\lambda/x + (y-y'')\cdot \mu/x  + \phi(x''/x', x', y', (y'-y'')/x', v) \pm \log (x''/x')\big)/h} \\ \times a(x, y, \lambda, \mu) b(x''/x', x', y'', (y'-y'')/x',  v, h) \, dv \, \frac{dx'' \, dy''}{{x''}^{n+1}} d\lambda d\mu
\end{multline*}
We introduce coordinates $Y'' = (y-y'')/x$, $s'' = x''/x$. The integral becomes
\begin{multline*}
h^{-m-k/2-3(n+1)/2} \int e^{i\big( (1-s'')\lambda + Y'' \cdot \mu  + \phi(s''s, x', y'', sY''+Y, v) \pm \log (s''s) \big)/h} \\ \times a(x, y, \lambda, \mu) b(s'' s, x', y'', sY''+Y,  v, h) \, dv \, \frac{ds'' dY''}{{s''}^{n+1}} d\lambda d\mu .
\end{multline*}

We perform stationary phase in the variables $(s'', Y'', \lambda, \mu)$. It is straightforward to check that the Hessian in these variables is nondegenerate at the stationary point
\begin{equation}
s'' = 1, \ Y'' = 0 , \ \lambda = s d_s \phi \pm 1, \ \mu = x d_y \phi - d_Y \phi.
\label{spt2} \end{equation}
We then get a stationary phase expansion, as in the previous regions, leading to the conclusion that $AU$ has an expression
$$
U = h^{-m-k/2-(n+1)/2} \int e^{i(\phi(s, x', y, Y, v) \pm \log s)/h} c(s, x', y, Y , v, h) \, dv  + O(h^\infty),
$$
such that $c$ is given in terms of $a $ and $b$ by a stationary phase expansion as in Regions 1 or 2a above. Thus, $AU$ is a Lagrangian distribution in $I^m(\Lambda)$, and $c$ is $O(h^\infty)$ wherever $b = O(h^\infty)$, and is supported
where $(x, y, \lambda, \mu) \in \WF(A)$. It follows (using \eqref{spt2}) that $\WF(AU)$ is contained in $\WF(U) \cap \pi_L^{-1}(\WF(A))$.

$\bullet$ Region 4b. This is given by a rather similar calculation to region 4a, which we omit.

$\bullet$ Region 5. Here we use the coordinates
$$
s_1 = \frac{x}{y_1' - y_1}, \ s_2 = \frac{x'}{y_1' - y_1}, \ t = y_1' - y_1, \ y', Z_j = \frac{y_j' - y_j}{y_1' - y_1}, \quad j \geq 2.
$$
In this region, $U$ has a local representation
$$
U = h^{-m-k/2-(n+1)/2} \int e^{i(\phi(s_1, s_2, t, y', Z,  v) \pm \log(s_1 s_2))/h} b(s, x', y, Y , v, h) \, dv.
$$
Writing $s'' = x/x''$ and $Y'' = (y-y'')/x$ as before, the composition is given by an oscillatory integral
\begin{multline*}
\int \exp \bigg\{ \frac{i}{h} \Big( (1 - s'')\lambda + Y'' \cdot \mu  + \phi \big ( \frac{s_1s''t}{t-xY''_1}, \frac{s_2t}{t-xY''_1}, t-xY''_1, y', \frac{tZ_j + x Y''_j}{t-xY''_1},  v \big) \pm \log \frac{s_1 s_2 s'' t^2}{(t-xY''_1)^2} \Big) \bigg\} \\ \times h^{-m-k/2-3(n+1)/2}  a(x, y, \lambda, \mu) b \big( \frac{s_1s''t}{t-xY''_1}, \frac{s_2t}{t-xY''_1}, t-xY''_1, y', \frac{tZ_j + x Y''_j}{t-xY''_1},  v, h\big)  \, dv \, \frac{ds'' \, dY''}{{s''}^{n+1}} d\lambda d\mu
\end{multline*}
We perform stationary phase in the variables $(s'', Y'', \lambda, \mu)$. There is a critical point at
\begin{multline}
s'' = 1, \ Y'' = 0, \ \lambda = s_1 d_{s_1} \phi, \ \mu = s_1 \Big(  s_{s_1} \phi + s_2 d_{s_2} \phi + Z \cdot d_Z \phi - t d_t \phi - s_1 d_{Z_1} \phi \Big), \ \\ \mu_j = -s_1 d_{Z_j} \phi, \ j \geq 2.
\label{spt3}\end{multline}
 It is straightforward to check that the Hessian in these variables is nondegenerate. We then get a stationary phase expansion, as in the previous regions, leading to the conclusion that $AU$ has an expression
$$
U = h^{-m-k/2-(n+1)/2} \int e^{i(\phi(s_1, s_2, t, y', Z, v) \pm \log (s_1 s_2))/h} c(s_1, s_2, t, y', Z, v, h) \, dv  + O(h^\infty),
$$
such that $c$ is given in terms of $a $ and $b$ by a stationary phase expansion. Thus, $AU \in I^m(\Lambda)$, and using the same reasoning as above, its microlocal support is contained in   $\WF(U) \cap \pi_L^{-1} \WF(A)$.

\end{proof}


\section{The spectral measure at high energy}\label{sec:he}

In this section, we prove Theorem~\ref{thm:kernelbounds1} for high
energies, $\blambda \geq 1$, which immediately implies also
Theorem~\ref{thm:kernelbounds}. Our first task  is to choose an
appropriate partition of the identity operator. This is done in
exactly the same way as was done in \cite{Guillarmou-Hassell-Sikora}
in the asymptotically conic case.

Before getting into the details we explain the advantage of using a  partition of the identity. It is to microlocalize the spectral measure (taking advantage of the microlocal support estimate, Proposition~\ref{prop:microsupport}) so that only the Lagrangian $\Lambda^{nd}$ is relevant, while the other part, $\Lambda^*$, disappears. This is important in our pointwise estimate, as the Lagrangian $\Lambda^{nd}$ locally projects diffeomorphically to the base manifold except at where it meets $N^* \diagz$, i.e. the projection ${}^\Phi \pi$, restricted to $\Lambda^{nd}$ has maximal rank except at the intersection with $N^* \diagz$, which leads to the most favourable $L^\infty$ estimates. (The drop in rank at the diagonal leads to the different form of the estimates for small $d(z,z')$ in Theorem~\ref{thm:kernelbounds}.) By contrast, we cannot control the rank of the projection from $\Lambda^*$ to the base (except by making additional geometric assumptions, such as nonpositive curvature of $X^\circ$, which we do in Sections~\ref{sec:restriction} and  \ref{sec:sm}).

\subsection{Partition of the identity}
Our operators $Q_i(\blambda)$ will be semiclassical
$0$-pseudodifferential operators of order $(0,0)$, where the first
index denotes the semiclassical order and the second, the
differential order, and the semiclassical parameter is $h = \blambda^{-1}$. In fact, all but $Q_0(\blambda)$ will have differential order $-\infty$.

First of all, we will choose $Q_{0}$ of order $(0,0)$ microlocally
supported away from the characteristic variety of $h^2 \Delta - 1$,
say in the region $\{\sigma(h^2 \Delta) \in [0, 3/4] \cup [5/4,
\infty) \}$, and microlocally equal to the identity in a smaller
region, say $\{\sigma(h^2 \Delta) \in [0, 1/2] \cup [3/2, \infty)
\}$.  In light of the disjointness of semiclassical wavefront sets,
the term $Q_{0}(\blambda) \spmeas Q_0(\blambda)$ has empty
microlocal support, and is therefore $O(h^\infty)$. Taking into
account the behaviour at the boundary, we find that
\begin{equation}
Q_{0}(\blambda) \spmeas Q_0(\blambda) \in h^\infty (\rho_L
\rho_R)^{n/2 - i/h} C^\infty(X^2_0 \times [0, h_0]) + h^\infty
(\rho_L \rho_R)^{n/2 + i/h} C^\infty(X^2_0 \times [0, h_0]).
\label{Q0term}\end{equation} This clearly satisfies
Theorem~\ref{thm:kernelbounds1}.

We next choose a cutoff function $\chi(x)$, equal to $1$ for $x \leq
\epsilon$ and $0$ for $x \geq 2\epsilon$. We decompose the remainder
$\Id - Q_0(\blambda)$ into $(\Id - Q_0(\blambda)) \chi(x)$ and $(\Id
- Q_0(\blambda)) (1-\chi(x))$, and further decompose these two
pieces in the following way.

We divide the interval $[-3/2, 3/2]$ into a union of intervals $B_i$
with overlapping interiors, and with diameter $\leq \beta$. We then
decompose $(\Id - Q_0(\blambda)) \chi(x)$ into operators
$Q_i(\blambda), \dots, Q_{N_1}(\blambda)$ such that each operator
$Q_i(\blambda)$ has wavefront set contained in $\{ \lambda (\lambda^2 + h^{ij} \mu_i \mu_j)^{-1/2}  \subset
B_i \}$.

Next, we decompose $(\Id - Q_0(\blambda)) (1-\chi(x))$.  The idea is
still to decompose this operator into pieces, so that on each piece
the microlocal support is small. Let $d(\cdot, \cdot)$ be the Sasaki
distance on $T^* X^\circ$. We break up $(\Id - Q_0(\blambda))
(1-\chi(x))$ into a finite number of operators $Q_{N_1 +
1}(\blambda), \dots, Q_{N_1 + N_2}(\blambda)$, each of which is such
that the microlocal support has diameter $\leq \eta$ with respect to
the Sasaki distance on $T^* X^\circ$. This is possible since the
microlocal support of $(\Id - Q_0(\blambda)) (1-\chi(x))$ is compact
in $T^* X^\circ$. We choose $\eta < \iota/4$, where $\iota$ is the
injectivity radius of $(X^\circ, g)$.

We now prove a key property about the microlocal support of
$Q_i(\bsigma) \spmeas Q_i(\bsigma)^*$, when $\epsilon$, $\beta$
and $\eta$ are sufficiently small.

\begin{proposition}\label{prop:Qi}
Suppose that $\epsilon$, $\beta$ and $\eta$ are sufficiently small.
Then, provided that $\bsigma$ and $\blambda$ satisfy $\bsigma \in [(1-\delta) \blambda, (1+\delta) \blambda]$ for sufficiently small $\delta$,  the microlocal support of $Q_i(\bsigma) \spmeas
Q_i(\bsigma)^*$, $i \geq 1$, is a subset of $\Lambda^{nd}$.
\end{proposition}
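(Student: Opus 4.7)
The plan is to proceed by a case analysis over the operators $\{Q_i\}_{i\geq 1}$ using Proposition~\ref{prop:microsupport}. By Theorem~\ref{thm:specmeas}, $\spmeas$ decomposes (modulo $h^\infty$ boundary-asymptotic remainders, which either lie manifestly in $\tL^{nd}$ or are $O(h^\infty)$) into a Lagrangian distribution on $\tL^{nd} \subset T^* X^2_0$ and one on $\tL^* \subset T^* X^2$. Applying Proposition~\ref{prop:microsupport} (together with the direct analogue for $\tL^*$, which follows from the same stationary-phase argument in standard cotangent coordinates since $\tL^*$ is a smooth Lagrangian without blow-up), the microsupport of $Q_i(\bsigma) \spmeas Q_i(\bsigma)^*$ is contained in
\begin{equation*}
\pi_L^{-1}\bigl(\WF(Q_i(\bsigma))\bigr) \cap \pi_R^{-1}\bigl(\WF(Q_i(\bsigma))\bigr) \cap (\tL^{nd} \cup \tL^*).
\end{equation*}
The proposition will follow once this intersection with $\tL^*$ is shown to be empty; equivalently, no $\BR^*$-pair with $(z, \zeta)$ and $(z', \zeta')$ on a common bicharacteristic can have both $(z, \zeta)$ and $\bar q := (z', -\zeta')$ lying in $\WF(Q_i(\bsigma))$.

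For the boundary operators $1 \leq i \leq N_1$, $\WF(Q_i(\bsigma)) \subset \{x \leq 2\epsilon\} \cap \{\lambda/|\zeta|_g \in B_i\}$ with $|B_i| \leq \beta$, where $\lambda$ denotes the $0$-cotangent coefficient of $dx/x$. The central dynamical input is the ODE $\dot\lambda = -(1-\lambda^2)(1 + O(x))$ derived in the proof of Proposition~\ref{prop:dist}, which shows that $\lambda$ is strictly monotone along every geodesic through the boundary collar $\{x \leq 2\epsilon\}$, with monotonicity rate uniformly controlled away from $|\lambda|=1$. Combining this monotonicity with the $B_i$-constraint at both endpoints forces any compatible bicharacteristic segment to represent a short displacement in $X^2_0$: first I would take $\epsilon$ small enough that the collar dynamics are a sufficiently small perturbation of the hyperbolic model (in particular, with no conjugate points in the collar); then for $\beta$ small, all compatible pairs project to a neighborhood of $\FF \cup \diagz$ on which the distance function is smooth and single-valued. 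By Proposition~\ref{prop:flowout-structure}, such pairs belong to $\tL^{nd}$ rather than $\tL^*$.

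For the interior operators $N_1 + 1 \leq i \leq N_1 + N_2$, $\WF(Q_i(\bsigma))$ is a compact Sasaki ball $S_i \subset T^* X^\circ$ of diameter $\leq \eta$, with $\eta < \iota/4$ where $\iota$ is the injectivity radius of $(X^\circ, g)$. If $(p, \bar q) \in \tL^*$ has both projections in $S_i$, then $p \approx \bar q$ in Sasaki distance, which translates to $q \approx \bar p$ (direction-reversed). Since $(p, q)$ lies on the same bicharacteristic, this requires $\phi_t(p) \approx \bar p$ for some $t \neq 0$. But the exact identity $\phi_t(p) = \bar p$ would force the geodesic to retrace itself: from $\zeta(t) = -\zeta(0)$ one deduces $\phi_{2t}(p) = p$, a closed trapped orbit contrary to the nontrapping assumption. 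Continuity of the flow, together with compactness of the relevant support region, then yields a uniform lower bound on the Sasaki distance from $q$ to $\bar p$ for $(p,q)$ ranging over $\BR^*$-pairs meeting this region; choosing $\eta$ strictly less than this bound excludes the $\tL^*$ contribution.

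Uniformity in $\bsigma \in [(1-\delta)\blambda, (1+\delta)\blambda]$ follows since $\WF(Q_i(\bsigma))$ depends continuously on $\bsigma$ and all the geometric conditions above hold with quantitative margins that persist for $\delta$ sufficiently small. The main obstacle I anticipate is in the boundary case: one must verify that the neighborhood $V$ of $\FF \cup \diagz$ defining $\tL^{nd}$ in Proposition~\ref{prop:flowout-structure} can be chosen large enough (for small $\epsilon$) to absorb \emph{all} boundary-collar bicharacteristic pairs compatible with the $B_i$-constraint -- including long boundary-skimming segments whose endpoints have $\lambda$-values near $\pm 1$, and boundary-to-boundary segments passing through the interior. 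Here the smoothness of the near-boundary Lagrangian structure, being a perturbation of its caustic-free hyperbolic-space counterpart, together with the smallness of $\epsilon$, $\beta$ and $\delta$, are essential.
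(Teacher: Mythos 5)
Your overall strategy is the same as the paper's: reduce via Proposition~\ref{prop:microsupport} to the claim that any point of $\BR$ whose two projections both lie in $\WF(Q_i(\bsigma))$ must belong to $\Lambda^{nd}$, then treat the collar terms using the boundary dynamics and the interior terms using a near-diagonal/nontrapping argument. However, there is a sign error that breaks your argument as written. For a point $(z,\zeta;z',-\zeta')\in\BR$, the condition imposed by post-composition with $Q_i(\bsigma)^*$ is that $(z',\zeta')\in\WF(Q_i(\bsigma))$ --- the second point of the geodesic with its \emph{forward} orientation --- not $\bar q=(z',-\zeta')$ as you assert. (Carrying out the stationary phase for $U Q^*$, the amplitude of $Q^*$ is evaluated at $(z',-d_{z'}\phi)$, and for $\BR$ the second covector of the Lagrangian point is $-\zeta'$, so the two sign reversals cancel; this is also how the condition is used in the paper's proof.) With your convention the interior case becomes ``$\phi_t(p)\approx (z,-\zeta)$'', and exact equality here is impossible for \emph{every} $t$: for $t=0$ it says $\zeta=-\zeta$, and for $t\neq0$ the geodesic would retrace itself and have vanishing velocity at its midpoint (note that $\phi_{2t}(p)=p$ does not follow as you claim). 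Your argument would therefore show that the microsupport of $Q_i(\bsigma)\spmeas Q_i(\bsigma)^*$ is \emph{empty}, which is false. The correct condition $\phi_t(p)\approx p$ must admit $t=0$, i.e. $N^*\diagz$, and it is only the returns with $t$ bounded away from $0$ that are excluded by nontrapping together with compactness of the microsupport of $(\Id-Q_0)(1-\chi)$.

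The boundary case is also not complete, as you yourself flag. The monotonicity $\dot\lambda=-(1-\lambda^2)(1+O(x))$ controls $\lambda$ only while the geodesic stays in the collar, and only at a rate degenerating as $\lambda\to\pm1$; the two regimes you identify (long boundary-skimming segments with $\lambda$ near $\pm1$, and segments leaving and re-entering the collar) are exactly where the work lies. The paper closes this gap by contradiction and compactness: take $\epsilon_k,\beta_k,\delta_k\to0$ and putative bad pairs, extract a convergent subsequence, and observe that the limiting geodesic is either a boundary bicharacteristic of the explicit form $x=0,\ y=y^*,\ \lambda=\cos\tau,\ \mu=\sin\tau\,\mu^*$ (when $\lambda_0\neq\pm1$), in which case $|\lambda_k-\lambda_k'|\to0$ forces $\tau=\tau'$ and the limit lies on $\partial_{\FF}\Lambda$, or an interior bicharacteristic with $\lambda_0=\lambda_0'=\pm1$ of the same sign, with the same conclusion. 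Some version of this limiting argument (or a genuinely quantitative substitute) is required. A smaller point: containment in $\Lambda^{nd}$ is not equivalent to empty intersection with $\tL^*$, since $\BR^{nd}$ and $\BR^*$ are overlapping open sets; the former is what is needed and is what your case analysis actually targets, so the framing should be adjusted accordingly.
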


\begin{remark} In the composition $Q_i(\bsigma) \spmeas Q_i(\bsigma)^*$, we view all operators as semiclassical operators with parameter $h = \blambda^{-1}$. To do this for $Q_i(\bsigma)$, we need to scale the fibre variables in the symbol by a factor of $\blambda/\bsigma$. This is of little consequence as   $\blambda/\bsigma$ is close to $1$ by assumption. 
\end{remark}

\begin{proof}
Recall that $\Lambda^{nd}$ consists of a neighbourhood $U_1$ of $\partial_{\FF}\Lambda$ in $\Lambda$, together with a neighbourhood $U_2$ of $\Lambda \cap N^* \diagz$ in $\Lambda$.

First suppose that $i=0$. By Proposition~\ref{prop:microsupport},
the microlocal support of $Q_0(\bsigma) \spmeas Q_0(\bsigma)^*$ is
empty for sufficiently small $\delta$, so the conclusion of Proposition~\ref{prop:Qi} trivially
holds.

Next suppose that $1 \leq i \leq N_1$. We claim that if $\epsilon$, $\beta$ and $\delta$ are sufficiently small, then the microsupport of
$Q_i(\bsigma) \spmeas Q_i(\bsigma)^*$ is contained in $U_1$. 

Let
$U_1' = U_1 \setminus  \partial_{\FF}\Lambda$, i.e. a deleted
neighbourhood of $\partial_{\FF}\Lambda$. Since the microlocal
support is always a closed set it suffices to show that
$$
\WF \big(Q_i(\bsigma) \spmeas Q_i(\bsigma)^* \big) \setminus \{
\rho_F = 0 \} \text{  is contained in } U_1'.
$$

 By Proposition~\ref{prop:microsupport}, this wavefront set is contained in
\begin{equation*}\begin{gathered}
\big\{ (z, \zeta; z', -\zeta') \mid |\zeta|_g = |\zeta'|_g = 1, \ (z,  \zeta) \text{ and } (z', \zeta') \text{ lie on the same geodesic, } \\
 (z, \frac{\blambda}{\bsigma}\zeta), (z', \frac{\blambda}{\bsigma}\zeta')  \in \WF(Q_i(\bsigma)) \big\}.
\end{gathered}\end{equation*}

We prove the claim by contradiction. Suppose that the claim were false. Choose sequences $\beta_k$, $\delta_k$ and $\epsilon_k$ tending to zero as $k \to \infty$, and for each $k$, a partition of the identity $Q_i^{(k)}$ satisfying the conditions above relative to $\beta_k$, $\delta_k$ and $\epsilon_k$. Then, if the claim is false for all $Q_i^{(k)}$, there are sequences of pairs of points $(x_k, y_k, \lambda_k, \mu_k)$, $(x'_k, y'_k, \lambda'_k, \mu'_k)$ in $\WF(Q^{(k)}_i)$, lying on the same geodesic $\gamma_k$,  with $x_k, x'_k \leq \epsilon_k$, $|\lambda_k - \lambda'_k| \leq \beta_k(1-\delta_k)^{-1}$, but the corresponding point $(x_k, y_k, \lambda_k, \mu_k; x'_k, y'_k, -\lambda'_k, -\mu'_k)$ not in $U_1'$. By compactness we can take a convergent subsequence, with $x_k \to 0$, $x'_k \to 0$, $y_k \to y_0$, $y'_k \to y'_0$, $\lambda_k, \lambda_k' \to \lambda_0$,
$\mu_k \to \mu_0$, $\mu'_k \to \mu'_0$.
Consider the limiting behaviour of the geodesic $\gamma_k$ connecting $(x_k, y_k, \lambda_k, \mu_k)$ and $( x'_k, y'_k, -\lambda'_k, -\mu'_k)$. If $\lambda_0 \neq \pm 1$ then $\gamma_k$ converges to a boundary bicharacteristic, that is, an integral curve of \eqref{ge} contained in $\{ x = 0 \}$, and therefore takes the form
$$
x(\tau) = 0, \ y(\tau) = y^*, \ \lambda(\tau) = \cos \tau, \ \mu(\tau) = \sin \tau \mu^*, \text{ where } \frac{d\tau}{dt} = \sin \tau.
$$
(It is straightforward to check that this satisfies the geodesic equations \eqref{ge} in the parameter $t$.)
Therefore, $(y_0, \lambda_0, \mu_0) = (y^*, \cos \tau, \sin \tau \mu^*)$ and $(y'_0, \lambda'_0, \mu'_0) = (y^*, \cos \tau', \sin \tau' \mu^*)$ for some $\tau$ and $\tau'$. Since $|\lambda_k - \lambda'_k| \to 0$, we  have $\tau=\tau'$, and it then follows that $\mu_0 = \mu'_0$. This shows that the limiting point lies on $\partial_{\FF} \Lambda$, in fact over the fibre $F_{y^*}$ of $\FF$ lying over $y^*$ (over which point on this fibre depends on the limiting values of $x/x'$ and $(y'-y)/x'$).  Hence the sequence converging to it eventually lies in $U_1'$, which is our desired contradiction.

If $\lambda_0 = \pm 1$ then the limiting geodesic could be an interior bicharacteristic. In this case we must have $\lambda_0, \lambda_0' \in \{ \pm 1 \}$, i.e the points $(x_0, y_0, \lambda_0, \mu_0)$ and $( x_0', y_0', -\lambda_0', \mu_0')$ are both an endpoint of this bicharacteristic. However the condition that the difference $|\lambda - \lambda'|  \to 0$ along the sequence means that either both $\lambda_0, \lambda_0'$ are $+1$ or both are $-1$. Thus, the limiting points $(x_0, y_0, \lambda_0, \mu_0)$ and $( x_0', y_0', -\lambda_0', \mu_0')$ are again equal in this case, 
with $x_0 = x_0' = 0$, $\mu_0 = \mu_0' = 0$, which shows that the limiting point lies on $\partial_{\FF} \Lambda$, hence the sequence converging to it eventually lies in $U_1'$, again producing a  contradiction.

We next claim that if $N_1 + 1 \leq i \leq N_1 + N_2$, then for
$\eta$ sufficiently small, the wavefront set of $Q_i(\bsigma)
\spmeas Q_i(\bsigma)^*$ is contained in $U_2$. The argument is
similar. Choose a sequence $\eta_k$ tending to zero as $k \to
\infty$, and for each $k$, a partition of the identity $Q_i^{(k)}$
satisfying the conditions above relative to  $\eta_k$. Then, if the
claim is false for all $Q_i^{(k)}$,  then we could find  a sequence
 $(z_k, \zeta_k), (z_k', \zeta_k')$ on the same geodesic, with  $(z_k,
\zeta_k), (z_k', \zeta_k')\in \WF(Q_i)$, with each $(z_k, \zeta_k; z_k',
-\zeta_k') $ not in $U_2$. Using compactness we can extract a
convergent subsequence from the $(z_k, \zeta_k)$, converging to $(z_0,
\zeta_0)$. Since $\eta_k \to 0$ the sequence $(z_k', \zeta_k')$ also converges to $(z_0, \zeta_0)$. But the point $(z_0, \zeta_0, z_0,
-\zeta_0)$ is in $N^* \diagz$, and $U_2$ is a neighbourhood of $N^*
\diagz$ in $\Lambda$, so this gives us the contradiction.

\end{proof}

We now assume that $\epsilon, \delta, \eta$ have been chosen small enough that the conclusion of Proposition~\ref{prop:Qi} is valid.

\subsection{Pointwise estimates for microlocalized spectral measure near the diagonal}

In this section we show that an element $U$ of $I^{-1/2}(X^2_0, \Lambda^{nd}; \Omegazh)$ satisfies \eqref{bpm} and \eqref{bpm2} in the region $d(z,z') \leq 1$.

We divide this into the case where we work away from the boundary of $\diagz$, and near a point on the boundary of the diagonal. The first case, localizing away from the boundary of the diagonal, has been treated in \cite[Proposition 1.3]{Hassell-Zhang} (this was done in the context of asymptotically conic manifolds, but away from the boundary, one `cannot tell' whether one is on an asymptotically conic or asymptotically hyperbolic manifold, so the argument applies directly).\footnote{Notice that the dimension was denoted $n$ in \cite{Hassell-Zhang}, instead of $n+1$, as here, when comparing \cite[Equation (1.13)]{Hassell-Zhang} with \eqref{bpm}.}

Thus, it remains to deal with the case where $\spmeas$ is microlocalized to a neighbourhood of a point $q \in \partial_{\FF} N^* \diagz \cap \Lambda^{nd}$. In this case, any parametrization of the Lagrangian $\Lambda^{nd}$ must have at least $n$ integrated variables, since the rank of the projection from $\Lambda^{nd}$ to the base $X^2_0$ drops by $n$ at $N^* \diagz$.

The following result is essentially taken from \cite{Hassell-Zhang}.

\begin{proposition}\label{prop:param}
Let $q$ be a point in $\partial_{\FF} N^* \diagz \cap \Lambda^{nd}_\pm$, and let $U \in I^{-1/2}(X^2_0, \Lambda^{nd}; \Omegazh)$.  Then microlocally near $q$, $U$ can be represented as an oscillatory integral of the form
\begin{equation}
h^{-n} \int_{\RR^n} e^{i\Psi(x', y, r,w, v)/h} a(x', y, r, w, v, h) \, dv,
\label{oscint-nd}\end{equation}
where the coordinates $(r, w)$, $w = (w_1, \dots, w_{n})$ define the boundary, that is, $\diagz = \{ r = 0, w = 0 \}$ and the differentials $dr$ and $dw_i$ are linearly independent. Here also, $v = (v_1, \dots, v_{n}) \in \RR^n$,  and $a$ is smooth and compactly supported in all variables. Moreover, we may assume that $\Psi$ has the properties
\begin{equation}
\begin{cases}
\text{(a) } d_{v_j} \Psi = w_j + O(r), \\
\text{(b) }\Psi = \sum_{j=1}^{n} v_j \partial_{v_j} \Psi + O(r), \\
\text{(c) }d^2_{v_j v_k} \Psi = r A_{jk}, \text{ where $A$ is a smooth, nondegenerate matrix} \\
\text{(d) }d_v \Psi = 0 \implies \Psi = \pm d(z, z').
\end{cases}
\label{Psi-properties}\end{equation}
\end{proposition}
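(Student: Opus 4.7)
The plan is to construct an explicit phase function $\Psi$ with exactly $n$ phase variables parametrizing $\Lambda^{nd}$ near $q$, modeled on the Hassell-Zhang construction for asymptotically conic manifolds. The geometric input driving the number of variables is the rank-drop statement in Proposition~\ref{prop:flowout-structure}: ${}^\Phi\pi : \Lambda^{nd} \to X^2_0$ is a local diffeomorphism except at $\Lambda^{nd} \cap N^*\diagz$, where the corank is exactly $n$. Hence $n$ phase variables are both necessary and sufficient. The model in flat Euclidean space is $\Psi_0 = w \cdot v + r\sqrt{1-|v|^2}$, obtained by stereographic parametrization of the unit sphere in the oscillatory integral representation of the flat spectral measure, and this serves as our template.

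The first step is to choose coordinates $(x', y, r, w)$ on $X^2_0$ near the projection of $q$ with $\diagz = \{r=0, w=0\}$, arranged so that at $q$, $\partial_r$ is metric-dual to the unit covector $\omega_0 \in {}^0 T^*_{z'}X$ attached to $q \in N^*\diagz$ and $\{\partial_{w_j}\}$ is an orthonormal basis for the orthogonal complement of $\omega_0$. Such coordinates can be built by rotating Region 4a or Region 5 coordinates of Section~\ref{sec:ahm} to align with $\omega_0$. The second step is to write the ansatz
\begin{equation*}
\Psi(x', y, r, w, v) = w \cdot v + r F(x', y, v) + R(x', y, r, w, v),
\end{equation*}
in which $R$ is a smooth correction chosen so that $\Psi|_{r=0}$ remains homogeneous of degree $1$ in $v$ and $R = O(r^2)$ along $w = 0$. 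The function $F$ is determined by imposing the eikonal constraint $|d_z \Psi|^2_g = 1$ on the critical set $\{d_v \Psi = 0\}$, with initial condition $F(x', y, 0) = 1$; this forces $d_z \Psi$ onto the unit cotangent sphere and hence the critical set into $\Lambda^{nd}_+$. In the flat limit $F$ reduces to $\sqrt{1-|v|^2}$, and the correction $R$ captures the deviation of the metric from flat and is built order-by-order in $(r, w)$.

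Properties (a)--(d) follow by inspection from the ansatz. Property (a) is immediate, since $d_{v_j}\Psi = w_j + r\partial_{v_j} F + \partial_{v_j} R$ and the last two summands vanish at $r=0$. Property (b) is the enforced homogeneity of $\Psi|_{r=0}$ in $v$. Property (c) holds because $d^2_{v_jv_k}\Psi = r\partial^2_{v_jv_k}F + O(r^2)$, with leading coefficient $\partial^2_v F|_{v=0} = -\mathrm{Id}$ inherited from the flat model. Property (d) comes from the eikonal construction: on the critical set, $w = -r\partial_v F + O(r^2)$ and $\Psi = r(F - v\cdot \partial_v F) + O(r^2)$, which by Hamilton-Jacobi theory equals $\pm d(z, z')$.

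The main obstacle is promoting (d) from a formal statement (valid to leading order in $r$) to an exact identity on the critical set. This requires solving the eikonal equation on a full neighbourhood, not just as a formal power series, and showing that the resulting $\Psi$ reproduces the true distance function on the critical set. The key ingredients here are the description of $\Lambda^{nd}_+$ as the closure of the graph of $d\, d(z, z')$ and the smoothness of $\Lambda^{nd}$ provided by Proposition~\ref{prop:flowout-structure}, both of which rest on the nontrapping assumption. The order-by-order construction of $R$ is essentially mechanical and is identical in structure to Hassell-Zhang's treatment of the asymptotically conic case, which is why this proposition is cited as essentially taken from that source.
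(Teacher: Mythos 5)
Your approach is genuinely different from the paper's, and it contains a real gap. The paper does not construct $\Psi$ by solving an eikonal equation off a flat model; it reads the phase directly off the given Lagrangian. Concretely: after showing that one can rotate the $(s-1,Y)$ coordinates so that $d\rho|_{N^*\diagz\cap\Lambda}=0$ and $dr|_\Lambda\neq 0$ at $q$, the paper observes that $(x',y,r,\kappa)$ (base variables plus the fibre variables dual to $w$) furnish coordinates on $\Lambda^{nd}$ near $q$; the closedness of the canonical one-form on $\Lambda$ then yields a generating function $f=O(r)$ with $\Xi'dx'+H\,dy+R\,dr+\sum_j\kappa_j dW_j=df$, and the phase is taken to be $\Psi=\sum_j(w_j-W_j(x',y,r,v))v_j+f(x',y,r,v)$. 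With this choice, (a) and (b) are immediate from $W,f=O(r)$; (c) follows because $\det d^2_{vv}\Psi$ equals the Jacobian determinant of the projection $\pi:\Lambda\to X^2_0$, which is comparable to $r^n$ by \cite[Proposition 15]{Chen-Hassell1}; and (d) follows from \cite[Proposition 16]{Chen-Hassell1}, which identifies the critical value with $\pm d(z,z')$ up to a constant that (b) forces to vanish. Nothing is constructed order by order, and no PDE is solved.

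The gap in your version is exactly the point you flag as ``the main obstacle'' but do not close. Conditions (c) and (d) must hold \emph{exactly} on a full neighbourhood of $q$ --- they are the input to the stationary phase estimates of Proposition~\ref{prop:d<1} --- whereas your construction of $F$ and $R$ is order-by-order in $(r,w)$ and hence only formal; there is no Borel summation or convergence argument, and a formal solution leaves an $O(r^\infty)$ (or worse, $O((r,w)^N)$ for finite $N$) discrepancy between the Lagrangian your $\Psi$ parametrizes and the given $\Lambda^{nd}$, which would destroy (d). Moreover, $q$ lies over the front face $\FF$, where the metric coefficients in the eikonal equation degenerate (the relevant phase space is ${}^\Phi T^*X^2_0$, not $T^*X^2$), so even local-in-time Hamilton--Jacobi theory does not directly give smoothness of your solution up to $\FF$; that uniformity is precisely what the $0$-geometry analysis of \cite{Chen-Hassell1} provides and what the generating-function argument inherits for free. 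Similarly, your verification of (c) only gives nondegeneracy of $A$ at $v=0$ via the flat model; the paper's identification of $\det d^2_{vv}\Psi$ with the Jacobian of $\pi|_\Lambda$ is what gives nondegeneracy on a full neighbourhood. To repair your argument you would essentially have to replace the eikonal construction by the observation that $\Lambda^{nd}$ is already known to be smooth with the stated rank behaviour, at which point the generating-function route is both shorter and exact.
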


\begin{proof} In this region, $(x', y, s=x/x', Y = (y'-y)/x')$ furnish local  coordinates on $X^2_0$. In these coordinates, the diagonal is defined by $s = 1, Y = 0$. We will write $(r, w)$ for a suitable rotation of the coordinates $(s-1, Y)$. Let $(\xi', \eta, \rho, \kappa)$ be the dual coordinates to $(x', y, r, w)$. We claim that, for some  rotation $(r, w)$ of the $(s-1, Y)$ coordinates, we have $d\rho |_{N^* \diagz \cap \Lambda^{nd}}= 0$ at $q$. This follows from the fact that $N^* \diagz \cap \Lambda^{nd}$ projects to $\diagz$ with $n$-dimensional fibres (in fact $N^* \diagz \cap \Lambda^{nd}$ is an $S^n$-bundle over $\diagz$), so the $n+1$ differentials $d\rho, d\kappa_1, \dots, d\kappa_n$ span an $n$-dimensional space in $T_q (N^* \diagz \cap \Lambda^{nd})$ for each $q \in N^* \diagz \cap \Lambda^{nd}$.

Secondly, we claim $d r|_{\Lambda} \neq 0$ at $q$. To see this, we observe that there must be a vector $V\in T_q \Lambda$ tangent to $\Lambda$ but not tangent to $N^* \diagz$. Therefore, $V$ must have a non-zero $\partial_{w_j}$ or $\partial_r$ component. Also, the vectors $\partial_{\kappa_i}$ are tangent to $\Lambda$ at $q$, since $T_q (N^* \diagz \cap \Lambda^{nd})$ is the codimension 1 subspace of $T_q (N^* \diagz)$ given by the vectors annihilated by $d\rho$. Thus, as $\Lambda$ is Lagrangian, we have
$$
\omega(V, \partial_{\kappa_i}) = \Big(  dy \wedge d {\eta} + dr \wedge d\rho + dw \wedge d\kappa + dx^\prime \wedge d{\xi}^\prime \Big) (V, \partial_{\kappa_i}) = 0.
$$
This implies that $dw_j(V) = 0$ for each $j$, which implies $V$ has a non-zero $\partial_r$ component, as claimed.

This shows that $(x', y, r, \kappa)$ furnish coordinates on $\Lambda$ locally near $q$. Thus, we can express the  remaining coordinates, restricted to $\Lambda$, as smooth functions of these:
\begin{gather}
w = W(x^\prime, y, r,\kappa), \quad
\rho = R(x^\prime, y, r,\kappa), \quad
{\xi}' = {\Xi}'(x^\prime, y, r,\kappa), \quad
{\eta} = {H}(x^\prime, y, r,\kappa).
\label{express}\end{gather}
Also, using the fact that $\Lambda$ is Lagrangian, the form
\begin{equation}
d \Big( \xi' dx' + \eta \cdot dy + \rho dr + \sum_j \kappa_j  dw_j \Big) = 0 \text{ on } \Lambda.
\label{omega-vanishes}\end{equation}
It follows that there is a function $f(x', y, r, \kappa)$ on $\Lambda$, defined near $q$,  such that
\begin{equation}
 \Xi' dx' + H \cdot dy + R dr + \sum_j \kappa_j dW_j = df.
\label{df}\end{equation}
Notice that $\Lambda \cap \{ r = 0 \} = \Lambda \cap N^* \diagz$, and at $N^* \diagz$ we have $\xi' = 0, \eta = 0, w = 0$. Therefore, at $r = 0$, we have
$$
\frac{\partial f}{\partial x'} = 0, \quad \frac{\partial f}{\partial y} = 0, \quad \frac{\partial f}{\partial \kappa} = 0.
$$
It follow that $f$ is constant when $r=0$. Since $f$ is undetermined up to a constant, we may assume that $f=0$ when $r=0$; that is, $f(x', y, r, \kappa) = r \tilde f(x', y, r, \kappa)$.

We claim that the function
$$\Psi = \sum_{j=1}^n \big(w_i - W_i(x^\prime, y, r, v)\big) v_i + f(x^\prime, y, r, v)$$
locally parametrizes the Lagrangian $\Lambda$ near $q$, and satisfies properties (a) -- (d) above.

To check that $\Psi$ parametrizes $\Lambda$, we set $d_v \Psi = 0$. This implies that
\begin{equation}
w_i - W_i(x^\prime, y, r, v) = \sum_i v_j \frac{\partial W_j}{\partial v_i} - \frac{\partial f}{\partial  v_i}.
\label{dvPsi}\end{equation}
On the other hand, the 1-form identity \eqref{df} shows that the
functions $W, R, \Xi', H$ and $f$ satisfy the identities
\begin{equation}\begin{aligned}
\Xi'(x', y, r, v) = -\sum_j v_j \frac{\partial W_j}{\partial x'}(x', y, r, v) + \frac{\partial f}{\partial x'}(x', y, r, v) &= \frac{\partial \Psi}{\partial x'} \\
H_k = -\sum_j v_j \frac{\partial W_j}{\partial y_k} + \frac{\partial f}{\partial y_k} &= \frac{\partial \Psi}{\partial y_k} , \\
R = - \sum_j v_j \frac{\partial W_j}{\partial r} + \frac{\partial f}{\partial r} &= \frac{\partial \Psi}{\partial r} , \\
\sum_j v_j \frac{\partial W_j}{\partial v_i} &= \frac{\partial f}{\partial v_i}.
\end{aligned}\end{equation}
The last of these identities shows that the RHS of \eqref{dvPsi} vanishes. We therefore find that the Lagrangian parametrized by $\Psi$ is
\begin{equation}\begin{gathered}
\Big\{  (x', y, r, w; \xi', \eta, \rho, \kappa) \mid \xi' = d_{x'} \Psi, \  \eta = d_{y} \Psi,  \  \rho = d_r \Psi,  \  \kappa = d_w \Psi \Big\} \\
= \Big\{  (x', y, r, w; \xi', \eta, \rho, \kappa) \mid \xi' = \Xi' , \   \eta = H',  \  \rho = R, \   w = W \Big\} \\
= \Lambda.
\end{gathered}\end{equation}

It follows that, microlocally near $q$, the spectral measure may be written as an oscillatory integral with phase function $\Psi$, as in \eqref{oscint-nd}, where the power of $h$ is given by $-m - N/4 - k/2 = -n$ where $m = -1/2$ is the order of the Lagrangian distribution, $N = 2(n+1)$ is the spatial dimension and $k=n$ is the number of integrated variables.

Conditions (a) and (b) are easily verified, using the fact that $W$ and $f$ are $O(r)$. To check condition (c), we write
$$d_{vv}^2 \Psi = r A + O(r^2),$$
where $A$ is an $n \times n$ matrix function of $(r, x^\prime, y, v)$. We claim that $A$ is invertible at $q$. It suffices to check $d_{vv}^2 \Psi = O(r^n)$ near $q$. On one hand, since $\Psi$ is a phase function parametrizing $\Lambda$ nondegenerately in a neighbourhood of $q$, then we have a local diffeomorphism $$\{(r, x', y, v)\}  \longrightarrow \{(r, x' ,y, \Psi^\prime_r, \Psi^\prime_{x'}, \Psi_y^\prime, \Psi'_v)\}.$$ The determinant of the differential of the map \begin{equation}\label{eqn: projection map variant}\{ (r, x', y, \Psi^\prime_r, \Psi^\prime_{x'}, \Psi_y^\prime, \Psi'_v) \} \longrightarrow \{ (r, x', y, \Psi'_v) \}\end{equation} is thus equal to the determinant of the differential of the map $$\{(r, x', y, v)\} \longrightarrow \{ (r, x', y, \Psi'_v) \},$$ which is simply $\det d^2_{vv} \Psi.$ On the other hand, it is obvious that the determinant of the differential of map (\ref{eqn: projection map variant}) equals the determinant of the projection map \begin{equation*}\pi : \{ (r, x', y, \Psi^\prime_r, \Psi^\prime_{x'}, \Psi_y^\prime) : \Psi'_v = 0 \} \longrightarrow \{ (r, x', y) \}\end{equation*}
from $\Lambda$ to $X^2_0$.
Since $\det d\pi = O(r^n)$ near $q$ \cite[Proposition 15]{Chen-Hassell1}, we get $$\det d^2_{vv} \Psi = O(r^n) \quad \mbox{near $q$},$$ which implies $A$ is invertible.

To check (d), we notice that if $r=0$ and $d_v \Psi= 0$, then $\Psi = 0$. Also, $\Lambda \cap \{ r = 0 \} = \Lambda \cap N^* \diagz$, so this is at the diagonal, i.e. $d(z, z') = 0$, so this agrees with (d). On the other hand, if $r \neq 0$, then condition (c) says that $d^2_{vv} \Psi$ is nondegenerate. In this case we can eliminate the extra $v$ variables, and reduce to a function of $(x', y, r, w)$ that parametrizes the Lagrangian $\Lambda$ locally. (This is an analytic reflection of the geometric fact that the projection $\pi : {}^\Phi T^* X^2_0 \to X^2_0$ has full rank restricted to $\Lambda$, for $r \neq 0$.)  But then \cite[Proposition 16]{Chen-Hassell1} implies that
the value of $\Psi$, when $d_v \Psi = 0$, is equal to $d(z, z') + c$ on
the forward half of $\Lambda$ (with respect to geodesic flow), and $-d(z, z') + c'$ on the backward half of $\Lambda$. Condition (b) implies that $c=c' =0$. This completes the proof of Proposition~\ref{prop:param}
\end{proof}

Using this we now show

\begin{proposition}\label{prop:d<1} Suppose $U \in I^{-1/2}(X^2_0, \Lambda^{nd}; \Omegazh)$. Then, $U$ can be written in the form \eqref{bpm} with the amplitude functions $b_\pm$ satisfying \eqref{bpm2} in the region $d(z,z') \leq 1$.
\end{proposition}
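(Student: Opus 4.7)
The plan is to apply stationary phase to the oscillatory integral representation of $U$ provided by Proposition~\ref{prop:param}, dividing into the two regimes $\blambda r \gtrsim 1$ and $\blambda r \lesssim 1$ via a smooth cutoff.

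Since $\Lambda^{nd}$ is the union of the smooth Lagrangians $\Lambda^{nd}_\pm$ meeting at $N^*\diagz$, and $\Psi$ parametrizes this union (via properties (c) and (d)), for each $(x', y, r, w)$ with $r > 0$ small there are two critical points $v_\pm(x', y, r, w)$ of $\Psi$ in $v$, corresponding to the two branches, at which $\Psi = \pm d(z, z')$. Choose $\chi \in C_c^\infty(\RR)$ with $\chi = 1$ on $[-1, 1]$ and $\chi = 0$ off $[-2, 2]$, and split $U = U_{\mathrm{far}} + U_{\mathrm{near}}$ by inserting the factors $1 - \chi(\blambda r)$ and $\chi(\blambda r)$, respectively, into the amplitude of \eqref{oscint-nd}.

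In the far regime $\blambda r \geq 1$, property~(c) gives a uniformly non-degenerate Hessian $\blambda r A$ with $|\det|^{1/2} \asymp (\blambda r)^{n/2}$, so standard stationary phase in $v$ around each $v_\pm$ yields
\begin{equation*}
U_{\mathrm{far}} = \blambda^n (\blambda r)^{-n/2} \sum_\pm e^{\pm i\blambda d(z,z')}\, \tilde a_\pm(x', y, r, w, h),
\end{equation*}
with $\tilde a_\pm$ smooth and admitting a complete asymptotic expansion in $h = \blambda^{-1}$. The metric structure near $\diagz$ gives $d \asymp \sqrt{r^2 + |w|^2}$, and non-stationary phase forces $\tilde a_\pm = O(h^\infty)$ where $|w| \gg r$; on the remaining support $|w|\lesssim r$ one has $d\asymp r$, hence $(\blambda r)^{-n/2} \leq C(1+\blambda d)^{-n/2}$. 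Setting $b_\pm^{\mathrm{far}} := (\blambda r)^{-n/2}\tilde a_\pm$ then gives \eqref{bpm2}, since each $\partial_\blambda$ costs a factor $\blambda^{-1}$ when it falls on $(\blambda r)^{-n/2}$ and $\blambda^{-2}$ when it falls on $\tilde a_\pm$ via its $h$-dependence.

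In the near regime $\blambda r \leq 2$, the bounds $w, W = O(r)$ and $f = O(r)$ for the phase data in Proposition~\ref{prop:param} yield $|\Psi| \leq Cr \leq C\blambda^{-1}$ uniformly in $v$ on the support of $a$, hence $|\blambda \Psi| \leq C$. Therefore $F(\blambda; x', y, r, w) := \int e^{i\blambda\Psi}\, \chi(\blambda r)\, a\, dv$ satisfies $|\partial_\blambda^j F| \leq C_j \blambda^{-j}$, since each $\blambda$-derivative costs at worst $\blambda^{-1}$: a factor $\Psi = O(\blambda^{-1})$ when it falls on $e^{i\blambda\Psi}$, a factor $r = O(\blambda^{-1})$ on $\chi(\blambda r)$, and $-h^2 \partial_h = O(\blambda^{-2})$ on $a$. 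Set $b_\pm^{\mathrm{near}} := \tfrac12 e^{\mp i\blambda d} F$; non-stationary phase makes the essential support of $F$ contained in $|w|\lesssim r$ modulo Schwartz errors, where $\blambda d \leq C\blambda r \leq C$, so $(1 + \blambda d)^{-n/2}$ is bounded below and each derivative of $e^{\mp i\blambda d}$ costs only $d = O(\blambda^{-1})$, giving \eqref{bpm2}. Taking $b_\pm := b_\pm^{\mathrm{far}} + b_\pm^{\mathrm{near}}$ completes the construction. The main technical obstacle is the uniform handling of the transition $\blambda r \sim 1$: stationary phase remainder estimates degrade as $\blambda r \downarrow 1$, while the direct bound becomes less sharp as $\blambda r \uparrow 1$; the cutoff guarantees both representations are valid and match modulo $O(h^\infty)$ on the overlap $\blambda r \in [1, 2]$.
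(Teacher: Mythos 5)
Your decomposition is essentially the paper's: your near regime $\blambda r\lesssim 1$ is the paper's Case 1, your non-stationary-phase region $|w|\gg r$ is Case 2, and your far regime $\blambda r\gtrsim 1$ is Case 3. The near-regime and non-stationary-phase arguments are fine (modulo one ordering issue: the bound $|\Psi|\leq Cr$ uses $|w|\lesssim r$, which you only justify afterwards via non-stationary phase, and $w$ itself --- unlike $W$ and $f$ --- is not $O(r)$ a priori; this is repairable by doing the $|w|\gg r$ reduction first, as the paper does).

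The far regime, however, contains a genuine gap: the derivative estimates in \eqref{bpm2} for $j\geq 1$ are asserted, not proved, and the assertions as stated are not correct. First, the stationary-phase expansion is in powers of $(\blambda r)^{-1}$, not of $h=\blambda^{-1}$; near the transition $\blambda r\sim 1$ it is not an asymptotic expansion at all (every term is comparable to the leading one), so ``$\tilde a_\pm$ admits a complete asymptotic expansion in $h$'' and ``$\partial_\blambda$ costs $\blambda^{-2}$ when it falls on $\tilde a_\pm$'' do not follow from standard stationary phase. Second, and more seriously, when you differentiate $e^{\mp i\blambda d}\,U_{\mathrm{far}}$ in $\blambda$, the exponential contributes a factor $i(\Psi\mp d)=i r\widetilde\Psi$ with $\widetilde\Psi:=r^{-1}(\Psi\mp d)$, i.e.\ a factor $(\blambda r)\widetilde\Psi$ relative to the $\blambda^{-1}$ loss you are claiming. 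Since $\blambda r$ can be as large as $\blambda$, this factor is \emph{not} uniformly bounded on the domain of $v$-integration; it is only controlled because properties (c) and (d) force $\widetilde\Psi$ to vanish to \emph{second} order at the critical points, so that $|(\blambda r)\widetilde\Psi|=O(1)$ only on a ball of radius $(\blambda r)^{-1/2}$ around each critical point in the variables $\theta=d_v\widetilde\Psi$. Making this quantitative --- splitting the $v$-integral at scale $\delta=(\blambda r)^{-1/2}$, bounding the inner piece by $\delta^n((\blambda r)\delta^2)^\beta$ and the outer piece by repeated integration by parts with the operator $L=-i(\blambda r)^{-1}|d_\theta\widetilde\Psi|^{-2}d_\theta\widetilde\Psi\cdot\partial_\theta$ --- is precisely the uniform estimate \eqref{3.28} that constitutes the technical core of the paper's Case 3 (following Hassell--Zhang), and it is absent from your argument. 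Without it, the claim that each $\blambda$-derivative of $b_\pm^{\mathrm{far}}$ costs a full factor of $\blambda^{-1}$, uniformly for $r\in[h,1]$, is unsupported. Relatedly, your final remark that the two representations ``match modulo $O(h^\infty)$ on the overlap'' is not the right fix: nothing needs to match, since the pieces are summed; what is needed is that the far piece itself satisfies the uniform bounds down to $\blambda r\sim 1$, which is exactly what the missing argument provides.
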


\begin{proof} We estimate the integral \eqref{oscint-nd} by dividing into three cases, depending on the relative size of $r$, $|w|$ and $h$.

\textbf{Case 1.} $|r| \leq h$. Since $r = 0$ at $\Lambda^{nd} \cap N^* \diagz$, and $dr \neq 0$ there, $|r|$ is comparable to $d(z, z')$. So in this case, we have $d(z,z') = O(h)$. Thus, we need to show  (comparing to \eqref{oscint-nd})
$$
\Big( h \frac{d}{dh} \Big)^j \bigg( e^{\mp i d(z,z')/h} 
\int_{\RR^n} e^{i\Psi(x', y, r,w, v)/h} a(x', y, r, w, v, h) \, dv \bigg) = O(1).
$$
For $j=0$ this is trivial. Consider $j=1$. We claim that
this is also $O(1)$. This differential operator is certainly
harmless when applied to the amplitude, $a$. When applied to the
exponential, it brings down a factor $i (\pm d(z,z') +  \Psi)/h$,
which using (b) and $r, d(z,z') = O(h)$, we write $i h^{-1} v_i
\Psi_{v_i} + O(1)$. 
The  term $i h^{-1}  v_i \Psi_{v_i}$ times the exponential $e^{i\Psi/h}$ is
equal to $v_i d_{v_i} e^{i\Psi/h}$. We integrate by parts, shifting
the $v_i$ derivative to the amplitude $a$. In this way we see that
the result of applying $h \partial_h$ to the expression is still
$O(1)$. A similar argument applies to repeated applications of
$h \partial_h$. Thus this term takes the form \eqref{bpm}.

\textbf{Case 2.} $|r| \leq c|w|$ for some small constant $c$.

In this case, there must be some $w_j$ such that $|r| \leq c |w_j|$. Then $d_{v_j} \phi = w_j + O(r) \neq 0$ in a neighbourhood of $q$, provided $c$ is sufficiently small. We can then integrate by parts arbitrarily many times in $v_j$, obtaining infinite order vanishing in $h$. The same is true for any number of $h \partial_h$ derivatives applied to \eqref{oscint-nd}.
Thus this term satisfies \eqref{apm}.

\textbf{Case 3.} $|r| \geq h$ and $|r| \geq c |w|$, with $c$ as in Case 2. 

The idea for this region is to use a stationary phase estimate, as in \cite[Section 4]{Hassell-Zhang}. We follow this proof almost verbatim; the changes required here are mostly notational.  In this case, we show a representation of the form \eqref{bpm}, \eqref{bpm2}. Notice that if $d_v \Psi = 0$ and $|r| \geq h$ then locally there are two sheets $\Lambda_\pm$ of $\Lambda$ above $X^2_0$ --- see Proposition~\ref{prop:flowout-structure}. On $\Lambda_\pm$  we divide by $e^{\pm id(z,z')/h}$ and show an estimate of the form \eqref{bpm2}. The argument for each is the same, so we only describe the argument for $\Lambda_+$. Thus, we define, with $d = d(z, z')$,
\begin{equation}
b(x', y, r, w, h) = e^{-id/h} h^{-n} \int e^{i\Psi(x', y, r, w, v)/h} a(x', y, r, w, v, h) \, dv,
\label{ba}\end{equation}
and seek to prove the estimate
\begin{equation}
\Big| (h \partial_h)^\alpha b \Big| \leq C \big(1 + \frac{|r|}{h} \big)^{-n/2},
\label{hb}\end{equation}
since in case 3, we have $|r| \sim |(r, w)| \sim d$.

Define
\begin{equation}\label{3.27}
\begin{split}
\widetilde{\Psi}(x',y,r,w,v)=r^{-1}\big(\Psi(x',y,r,w,v) - d(z,z')\big),
\end{split}
\end{equation}
and let ${\tilde \blambda}=r/h$. Notice that this function
$\widetilde \Psi$ is  $C^\infty$ in $v$, and \eqref{Psi-properties} implies that all $v$-derivatives (of all orders) are uniformly bounded in the region $|r| \geq h$ and $|r| \geq c |w|$. Then the LHS of \eqref{hb} is
\begin{equation*}
\begin{split}
h^\alpha \partial_h^\alpha b(x', y, r, w, v,
h)&=\sum_{\beta+\gamma=\alpha}\frac{\alpha!}{\beta!\gamma!}{\tilde
\blambda}^\beta\int e^{ i{\tilde \blambda}
\widetilde{\Psi}(x',y,r,w,v)}\widetilde{\Psi}^{\beta} \big(h^\gamma
\partial_h^{\gamma} a\big)(x', y, r, w, v, h) \, dv.
\end{split}
\end{equation*}
Therefore, we need to show that, for any $\beta$, we have
\begin{equation}\label{3.28}
\Big|\int e^{ i{\tilde \blambda}
\widetilde{\Psi}(x',y,r,w,v)}({\tilde
\blambda}\widetilde{\Psi})^{\beta} a(x', y, r, w, v, h) \, dv
\Big|\leq C {\tilde \blambda}^{-\frac{n}2}.
\end{equation}

We now fix $(x', y, r, w)$ with $r \geq h$.  We use a cutoff
function $\Upsilon$ to divide the $v$ integral into two parts: one
on the support of $\Upsilon$, in which $|d_v \widetilde\Psi| \geq
\tilde\epsilon/2,$ and the other on the support of $1 - \Upsilon$,
in which $|d_v \widetilde\Psi| \leq \tilde\epsilon$. On the support
of $\Upsilon$, we integrate by parts in $v$ and gain any power of
${\tilde \blambda}^{-1}$, proving \eqref{3.28}. On the support of $1
- \Upsilon$, we make a change of  variable to $\theta$ coordinates:
\begin{equation}(v_1\cdots,v_n)\rightarrow (\theta_1,\cdots,\theta_{n}),\quad \theta_i=d_{v_i}\widetilde{\Psi},~i=1\cdots, n.
\label{cov}\end{equation}
By property (c) of Proposition~\ref{prop:param},
$$
\frac{\partial \theta_j}{\partial v_k} = d^2_{v_jv_k} \widetilde \Psi = \pm A_{jk},
$$
where $A_{jk}$ is nondegenerate. This shows that this change of variables is locally nonsingular, provided $\tilde\epsilon$ is sufficiently small. Thus, for each point $v$ in the support of $1 - \Upsilon$, there is a neighbourhood in which we can make this change of variables. Using the compactness of the support of $a$ in \eqref{ba},  there are a finite number of neighbourhoods covering $\supp \Upsilon$ and the $v$-support of $a$.

On each such neighbourhood $U$, we define
$\mathcal{B}_\delta:=\big\{\theta:|\theta|\leq \delta\big\}$. Choose a  $C^\infty$ function $\chi_{\mathcal{B}_\delta}(\theta)$ which is equal to
$1$ when on the set $\mathcal{B}_\delta$ and $0$ outside
$\mathcal{B}_{2\delta}$, and with derivatives bounded by
$$
\big| \nabla^{(j)}_\theta \chi_{\mathcal{B}_\delta}(\theta) \big| \leq C \delta^{-j}.
$$
Here $\delta$ is a parameter which we will eventually choose to be
${\tilde \blambda}^{-1/2}$; however, for now we leave its value
free. Consider the integral \eqref{3.28} after changing variables
and with the cutoff function $\chi_{\mathcal{B}_\delta}(\theta)$
inserted (where we stipulate $\delta \leq \tilde\epsilon/2$, which
means that  that $1 - \Upsilon = 1$ on $\supp
\chi_{\mathcal{B}_\delta}(\theta)$):
\begin{equation*}
\Big|\int e^{i{\tilde \blambda} \widetilde{\Psi}(x', y, r, w,
\theta)}\big({\tilde \blambda}\widetilde{\Psi}\big)^{\beta} a(x', y,
r, w, \theta,
h)\chi_{\mathcal{B}_\delta}(\theta)\frac{d\theta}{|A^{-1}(x', y, r,
w,\theta)|}\Big|.
\end{equation*}
Using property (d) of Proposition~\ref{prop:param}, we see that
$\widetilde \Psi = 0$  when $\theta = 0$.  Also, from \eqref{cov}, we have  $d_\theta \widetilde \Psi = 0$ when $\theta =
0$. Hence $\widetilde \Psi = O(|\theta|^2)$. Hence
\begin{equation}
\Big|{\tilde \blambda}^\beta\int e^{ i{\tilde \blambda}
\widetilde{\Psi}(x', y, r,w,\theta)}\widetilde{\Psi}^{\beta} a(x',
y, r, w,\theta, h)\chi_{\mathcal{B}_\delta}(\theta)
\frac{d\theta}{|A^{-1}(x', y, w,\theta)|}\Big|\leq C({\tilde
\blambda}\delta^2)^\beta\delta^{n}. \label{chi}\end{equation}

It remains to treat the integral with the cutoff
$(1-\chi_{\mathcal{B}_\delta}({\theta}))$. Notice that
$|d_\theta \widetilde \Psi|$ is comparable to $|\theta|$ since
$d_\theta \widetilde \Psi = 0$ when $\theta = 0$, and
$$
d^2_{\theta_i \theta_j} \widetilde \Psi = \sum_{k,l} (A^{-1})_{il}(A^{-1})_{jk} d^2_{v_kv_l}\widetilde\Psi
$$
is nondegenerate when $\theta = 0$.
We define the
 differential operator $L$ by
\begin{equation*}
\begin{split}
L=\frac{-id_{\theta}\widetilde{\Psi}\cdot\partial_{\theta}}{{\tilde
\blambda} \big|d_{\theta}\widetilde{\Psi}\big|^2}.
\end{split}
\end{equation*}
Then the adjoint operator ${}^t L$ is given by
\begin{equation*}
{^{t}}L=-L+\frac{i}{{\tilde \blambda}}\Big(\frac{\Delta_{\theta}
\widetilde{\Psi}}{|d_{\theta}\widetilde{\Psi}|^2}-2\frac{d^2_{{\theta}_j{\theta}_k}
\widetilde{\Psi} \, d_{{\theta}_j}\widetilde{\Psi} \,
d_{{\theta}_k}\widetilde{\Psi}}{|d_{\theta}\widetilde{\Psi}|^4}\Big).
\end{equation*}
We have chosen $L$ such that $L e^{i{\tilde \blambda} \widetilde
\Psi} = e^{i{\tilde \blambda} \widetilde \Psi}$. So we introduce $N$
factors of $L$ applied to the exponential $e^{i{\tilde \blambda}
\widetilde \Psi}$ and  integrate by parts $N$ times to obtain
\begin{multline*}
\Big|\int e^{ i {\tilde \blambda}\widetilde{\Psi}(x', y, r, w,
{\theta})}({\tilde \blambda}\widetilde{\Psi})^{\beta}
a(x', y, r, w, {\theta}, h)(1-\chi_{\mathcal{B}_\delta}({\theta}))(1 - \Upsilon) \, d{\theta}\Big| \\
\leq C \int \Big| ({^{t}}L)^N\big(({\tilde \blambda}
\widetilde{\Psi})^{\beta} a(x', y, r, w, {\theta},
h)(1-\chi_{\mathcal{B}_\delta}({\theta}))(1 - \Upsilon)\big)
\Big|d{\theta}.
\end{multline*}
Inductively we find, using $|d_\theta \widetilde \Psi| \sim |\theta|$,  that
$$\big|({^{t}}L)^N\big(({\tilde \blambda}\widetilde{\Psi})^{\beta}
a(1-\chi_{\mathcal{B_\delta}})(1 - \Upsilon)\big)\big|\leq C {\tilde
\blambda}^{-N+\beta}\max\big\{|\theta|^{2\beta-2N},
|\theta|^{2\beta-N}\delta^{-N}\big\}.$$ Choosing $N$ large enough,
we get
\begin{equation}
\begin{split}
\Big|\int e^{ i {\tilde \blambda} \widetilde{\Psi}(x', y, r,
w,{\theta})}&({\tilde \blambda}\widetilde{\Psi})^{\beta} a(x', y, r,
w, {\theta}, h) (1-\chi_{\mathcal{B_\delta}})(1 - \Upsilon) \,
d{\theta}\Big|\\&\leq {\tilde
\blambda}^{-N+\beta}\int_{|\theta|\geq\delta}\big(|\theta|^{2\beta-2N}+
|\theta|^{2\beta-N}\delta^{-N}\big)d\theta\leq C {\tilde
\blambda}^{-N+\beta}\delta^{2\beta-2N}\delta^{n}.
\end{split}
\label{1-chi}\end{equation} We choose $\delta={\tilde
\blambda}^{-1/2}$ to balance the two estimates \eqref{chi} and
\eqref{1-chi}. We finally obtain
\begin{equation*}
\begin{split}
\Big|\int e^{ i {\tilde \blambda}\widetilde{\Psi}(x', y, r, w,
{\theta})}&({\tilde \blambda}\widetilde{\Psi})^{\beta} a(x', y, r,
w, {\theta}, h) (1 - \Upsilon) \,  d{\theta}\Big|\leq C {\tilde
\blambda}^{-n/2},
\end{split}
\end{equation*}
which proves \eqref{3.28} as desired.\vspace{0.2cm}

\end{proof}

\begin{proof}[Proof of Theorem~\ref{thm:kernelbounds1} for high energies, $\blambda \geq 1$]

We express the spectral measure as a sum of 4 types of terms, (i)
--- (iv), as in Theorem~\ref{thm:specmeas}. Then, using
Proposition~\ref{prop:Qi}, we see that the microlocalized spectral
measure $Q_i(\blambda) \spmeas Q_i(\blambda)^*$ has microsupport
contained in $\Lambda^{nd}$, so the terms of type (iii) can be
disregarded. Clearly, terms of type (ii) and (iv) satisfy the
conclusion of Theorem~\ref{thm:kernelbounds1}, so it is only
necessary to consider the terms of type (i).

For terms of type (i), Proposition~\ref{prop:d<1} shows that
Theorem~\ref{thm:kernelbounds1} is satisfied. On the other hand, for large distance, we know from Proposition~\ref{prop:flowout-structure} that $\Lambda^{nd}_+$ and $\Lambda^{nd}_-$ both project diffeomorphically to a open set $V \subset X^2_0 \setminus \diagz$ under ${}^\Phi \pi$, and therefore, according to \cite[Proposition 20]{Chen-Hassell1}, the Lagrangian submanifolds $\Lambda^{nd}_\pm$ are parametrized by the distance function, $\pm d(z,z')$.
The amplitude has a classical expansion in powers of $h = \blambda^{-1}$, as shown by the construction of \cite[Section 4]{Chen-Hassell1}, and the leading power of $h$ is $h^{1/2 - (n+1)/2} = h^{-n/2}$, as claimed.

\end{proof}

\begin{corollary}\label{changed spectral measure estimates}Let $\delta$ be a small positive number. For high energies, $\blambda \geq 1$, one has
\begin{equation*}\begin{gathered}
\Big|  Q_k(\blambda)
\bigg( \big( \frac{d}{d\bsigma} \big)^j dE_{P}(\bsigma) \bigg) Q_k^\ast (\blambda)(z,z')  \Big| \leq
\begin{cases}
C \blambda^{n-j} (1 + d(z,z') \blambda)^{-n/2+j}, \text{ for } d(z,z') \leq 1 \\
C \blambda^{n/2} d(z,z')^{j} e^{-nd(z,z')/2}, \text{ for } d(z,z')
\geq 1
\end{cases}
\end{gathered},\end{equation*} provided $\bsigma$ is in $[(1 - \delta) \blambda, (1 + \delta) \blambda]$.\end{corollary}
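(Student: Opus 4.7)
The plan is to deduce this corollary directly from Theorem~\ref{thm:kernelbounds} by exchanging the roles of $\blambda$ and $\bsigma$. In Theorem~\ref{thm:kernelbounds} the microlocalizer sits at $\bsigma$ and the spectral measure derivative is taken at $\blambda$, while here we have the opposite; however, the underlying geometric content (namely Proposition~\ref{prop:Qi}) is symmetric in these two parameters.

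First I would verify that the interval constraint is symmetric up to a shrinking of $\delta$. If $\bsigma \in [(1-\delta)\blambda,(1+\delta)\blambda]$, then $\blambda/\bsigma \in [1/(1+\delta), 1/(1-\delta)]$, which for sufficiently small $\delta$ is contained in $[1-\delta', 1+\delta']$ for any prescribed $\delta' > \delta$. Thus by choosing the $\delta$ of the corollary small enough relative to the $\delta_0$ appearing in Theorem~\ref{thm:kernelbounds} and Proposition~\ref{prop:Qi}, the hypothesis $\bsigma \in [(1-\delta)\blambda,(1+\delta)\blambda]$ implies $\blambda \in [(1-\delta_0)\bsigma,(1+\delta_0)\bsigma]$.

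Having set this up, I would apply Theorem~\ref{thm:kernelbounds} with the dummy variable $\bsigma$ in its statement replaced by $\blambda$, and its dummy variable $\blambda$ replaced by $\bsigma$. This requires only that Proposition~\ref{prop:Qi} holds with the parameters swapped, which is immediate from its proof: the argument only uses that the two frequencies are comparable (the scaling factor $\blambda/\bsigma$ appearing in the semiclassical rescaling of the microlocalizer remains close to $1$), so the conclusion that $Q_k(\blambda) dE_P(\bsigma) Q_k^*(\blambda)$ has microsupport in $\Lambda^{nd}$ follows verbatim. The stationary-phase/pointwise estimates of Proposition~\ref{prop:d<1} and the large-distance estimate at the end of the proof of Theorem~\ref{thm:kernelbounds1} then yield the bound
\begin{equation*}
\Big| Q_k(\blambda) \big( (d/d\bsigma)^j dE_P(\bsigma)\big) Q_k^*(\blambda)(z,z') \Big| \leq
\begin{cases}
C \bsigma^{n-j}(1 + d(z,z')\bsigma)^{-n/2+j}, & d(z,z') \leq 1, \\
C \bsigma^{n/2} d(z,z')^j e^{-nd(z,z')/2}, & d(z,z') \geq 1.
\end{cases}
\end{equation*}

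Finally I would convert the factors of $\bsigma$ into factors of $\blambda$ using the equivalence $(1-\delta)\blambda \leq \bsigma \leq (1+\delta)\blambda$. Both $\bsigma^{n-j}$ and $\bsigma^{n/2}$ are comparable to $\blambda^{n-j}$ and $\blambda^{n/2}$ up to constants depending on $\delta$, and similarly $(1+d(z,z')\bsigma)^{-n/2+j}$ is comparable to $(1+d(z,z')\blambda)^{-n/2+j}$. Absorbing the resulting $\delta$-dependent multiplicative constants into $C$ yields the stated estimate. There is essentially no obstacle; the only subtlety is keeping track that $\delta$ in the corollary must be chosen small enough relative to the fixed $\delta_0$ of Theorem~\ref{thm:kernelbounds} and Proposition~\ref{prop:Qi}, which is a harmless constraint.
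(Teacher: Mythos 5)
Your proposal is correct and is essentially the paper's own (implicit) reasoning: the corollary is stated without proof precisely because it follows from Theorem~\ref{thm:kernelbounds} by interchanging the roles of $\blambda$ and $\bsigma$, using the symmetry of the comparability condition after shrinking $\delta$, and absorbing the ratio $\bsigma/\blambda \in [1-\delta,1+\delta]$ into the constants. The only point you leave tacit is that after the swap the spectral parameter $\bsigma$ may dip slightly below $1$, which is harmless by the remark following Theorem~\ref{thm:kernelbounds1} that the high/low energy split may be made at any positive threshold.
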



\section{Restriction theorem at high energy}\label{sec:restriction}

We now prove the restriction theorem, Theorem~\ref{thm:restriction},
at high energies, $\blambda \geq 1$.

We apply complex interpolation to the analytic (in the parameter $a \in \CC$) family of operators
$$
\phi(P/\blambda)\chi_+^a(\blambda - P).
$$
Here $\phi$ is a smooth function supported on $[1 - \delta, 1 + \delta]$ and equal to $1$ on $[1 - \delta/2, 1 + \delta/2]$, whilst $\chi_+^a$ is an entire family of distributions, defined for $\Re a > -1$ by
$$
\chi_+^a = \frac{x_+^a}{\Gamma(a + 1)} \quad \mbox{with} \quad x_+^a = \bigg\{ \begin{array}{r@{\quad \mbox{if} \quad}l}x^a & x \geq 0\\ 0 & x < 0\end{array} ,  \quad \Re a > -1.
$$
When $\Re a > 0$, we have
$$\frac{d}{dx}\chi_+^a = \chi_+^{a - 1},$$
and using this identity, we extend $\chi_+^a$ to the entire complex $a$-plane. Since $\chi_+^0 = H(x)$, we have $\chi_+^{-1} = \delta_0$, and more generally $\chi_+^{-k} = \delta_0^{(k - 1)}$. Therefore,
\begin{equation}
\chi_+^0(\blambda - P) = E_{P}\big((0, \blambda]\big) \quad
\mbox{and} \quad \chi_+^{-k}(\blambda - P) =
\Big(\frac{d}{d\blambda}\Big)^{k - 1} dE_{P}(\blambda).
\label{chi-k}\end{equation}

 Moreover, for any $\mu,\nu \in \mathbb{C},$ it is shown in  \cite[p.86]{Hormander1} that  $$\chi_+^\mu \ast \chi_+^\nu = \chi_+^{\mu + \nu + 1}.$$
 Using this identity, and the fact that  the $\blambda$-derivatives of the spectral measure are well-defined and obey kernel estimates as in Theorem~\ref{thm:kernelbounds}, we define, following \cite{Guillarmou-Hassell-Sikora}, operators $\chi_+^a(\blambda - P)$. For $k  \in \NN$ and $-(k + 1) < \Re a < 0$, we define
 $$
 \chi_+^a(\blambda - P) = \chi_+^{k + a} \ast \chi_+^{-(k + 1)}(\blambda - P) = \int_0^\blambda \frac{\bsigma^{k + a}}{k + a + 1}\Big(\frac{d}{d\blambda}\Big)^kdE_{P}(\blambda - \bsigma)\,d\bsigma.
 $$

 A standard application of Stein's complex interpolation theorem \cite{interpolation} yields
 \begin{proposition}\label{prop:ci}
 Suppose that, for $s \in \RR$, we have
 $$
\big\| Q_i(\blambda) \phi(P/\blambda) \chi_+^{is}(\blambda - P) Q_i^*(\blambda) \big\|_{L^2(X) \to L^2(X)} \leq
C_1 e^{C(1 + |s|)},
$$
and for some $\beta > 0$,
 $$
\big\| Q_i(\blambda) \phi(P/\blambda) \chi_+^{-\beta + is}(\blambda - P) Q_i^*(\blambda)\big\|_{L^1(X) \to
L^\infty(X)} \leq C_2 e^{C(1 + |s|)}.
$$
Then, the spectral measure $\spmeas = \chi_+^{-1}(\blambda - P)$ is
bounded from $L^{p}(X) \to L^{p'}(X)$, $p = 2\beta/(\beta+1)$, with
an operator norm bound
\begin{equation}
\big\|  \spmeas \big\|_{L^p(X) \to L^{p'}(X)} \leq C'(C) \, C_1^{(\beta-1)/\beta} \, C_2^{1/\beta}  .
\end{equation}
\end{proposition}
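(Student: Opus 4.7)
The plan is to apply Stein's complex interpolation theorem to the analytic family of operators
\begin{equation*}
T_a := Q_i(\blambda)\,\phi(P/\blambda)\,\chi_+^a(\blambda - P)\,Q_i^*(\blambda),
\end{equation*}
parametrized by $a$ in the closed strip $-\beta \leq \Re a \leq 0$. The construction of $\chi_+^a$ by analytic continuation, together with the spectral theorem for $P$, makes sense of $T_a$ throughout the strip via functional calculus, and the map $a \mapsto T_a$ is operator-analytic in the interior because $a \mapsto \chi_+^a$ is entire as a tempered-distribution-valued map. The objective is to interpolate between the two boundary lines of the strip and read off the resulting estimate at $a = -1$.

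First I would verify the two boundary inputs. On $\Re a = 0$ the hypothesis gives $\|T_{is}\|_{L^2 \to L^2} \leq C_1 e^{C(1+|s|)}$, and on $\Re a = -\beta$ it gives $\|T_{-\beta + is}\|_{L^1 \to L^\infty} \leq C_2 e^{C(1+|s|)}$. The sub-exponential growth in $|\Im a|$ is admissible for Stein's theorem, which then produces for every $\theta \in (0,1)$ the bound
\begin{equation*}
\|T_{-\beta\theta}\|_{L^{p_\theta} \to L^{p_\theta'}} \leq C'(C)\, C_1^{1-\theta}\, C_2^{\theta}, \qquad \frac{1}{p_\theta} = \frac{1-\theta}{2} + \theta = \frac{1+\theta}{2}.
\end{equation*}
Setting $\theta = 1/\beta$ selects $a = -1$ and $p_\theta = 2\beta/(\beta+1)$, with norm at most $C'(C)\, C_1^{(\beta-1)/\beta}\, C_2^{1/\beta}$, matching the claim.

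It then remains to identify $T_{-1}$ with the spectral measure (modulo the harmless microlocalization). Since $\chi_+^{-1} = \delta_0$, we have $\chi_+^{-1}(\blambda - P) = \spmeas$, and since $\phi(P/\blambda)$ is identically $1$ on the spectral support $\{P = \blambda\}$ of this measure, the cutoff can be dropped, giving $T_{-1} = Q_i(\blambda)\,\spmeas\,Q_i^*(\blambda)$. The full bound on $\spmeas$ then follows by decomposing $\spmeas = \sum_{i,j} Q_i\,\spmeas\,Q_j^*$ through $\Id = \sum_k Q_k(\blambda)$; the diagonal terms are controlled by the interpolation just performed, while the off-diagonal terms $i \neq j$ are handled via disjointness of their microlocal supports as granted by Proposition~\ref{prop:microsupport} in combination with the construction of the partition $\{Q_k\}$.

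The main technical obstacle is legitimizing Stein's theorem in this setting: one must confirm that $T_a$ is genuinely operator-analytic and of admissible type on the entire closed strip, which for $a$ close to the negative integer $-1$ forces an interpretation of $\chi_+^a(\blambda - P)$ through distributional derivatives of $\spmeas$ acting by functional calculus. Checking that the two-sided microlocal compression by $Q_i,Q_i^*$ together with the spectral cutoff $\phi(P/\blambda)$ indeed smooths these derivatives into a uniformly controlled analytic family on $-\beta \leq \Re a \leq 0$ is the delicate point; once this is in place, the interpolation and the identification of $T_{-1}$ are essentially automatic.
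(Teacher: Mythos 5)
Your interpolation argument is exactly the route the paper takes (it cites this as ``a standard application of Stein's complex interpolation theorem''): the analytic family $T_a = Q_i(\blambda)\phi(P/\blambda)\chi_+^a(\blambda-P)Q_i^*(\blambda)$ on the strip $-\beta\leq\Re a\leq 0$, the two boundary bounds with admissible exponential growth in $|\Im a|$, the choice $\theta=1/\beta$ landing at $a=-1$, and the exponent bookkeeping giving $p=2\beta/(\beta+1)$ and the bound $C_1^{(\beta-1)/\beta}C_2^{1/\beta}$ are all correct, as is dropping $\phi(P/\blambda)$ at $a=-1$ since $\phi\equiv 1$ near $1$.

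The gap is in your final step, recovering the bound for the full $\spmeas$ from the diagonal pieces $Q_i\spmeas Q_i^*$. You claim the off-diagonal terms $Q_i\spmeas Q_j^*$, $i\neq j$, are negligible ``via disjointness of their microlocal supports.'' This is false: Proposition~\ref{prop:microsupport} only gives $\WF(Q_i\spmeas Q_j^*)\subset \pi_L^{-1}(\WF(Q_i))\cap\pi_R^{-1}(\WF(Q_j))\cap\WF(\spmeas)$, and since $\WF(\spmeas)$ is (essentially) the bicharacteristic relation, this set is nonempty whenever some geodesic joins a point of $\WF(Q_i)$ to a point of $\WF(Q_j)$ --- which happens for generic pairs $i\neq j$. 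Indeed, the entire reason the paper only proves pointwise bounds for the \emph{diagonal} compositions is that the off-diagonal ones can be carried by the Lagrangian $\Lambda^*$, over which the projection to the base is uncontrolled; these terms are genuinely not $O(h^\infty)$. The correct way to pass from the diagonal bounds to $\spmeas$ itself is to use the positivity of the operator $\spmeas$: by Cauchy--Schwarz for the nonnegative form $\langle \spmeas\, u, u\rangle$,
\begin{equation*}
\big|\langle Q_i \spmeas\, Q_j^* f, g\rangle\big| \leq \langle \spmeas\, Q_j^* f, Q_j^* f\rangle^{1/2}\,\langle \spmeas\, Q_i^* g, Q_i^* g\rangle^{1/2} \leq \Big(\max_k \big\|Q_k \spmeas\, Q_k^*\big\|_{L^p\to L^{p'}}\Big)\, \|f\|_{L^p}\|g\|_{L^p},
\end{equation*}
so that $\|\spmeas\|_{L^p\to L^{p'}}\leq N^2\max_k\|Q_k\spmeas Q_k^*\|_{L^p\to L^{p'}}$, the factor $N^2$ being absorbed into $C'(C)$. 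This is the argument of Guillarmou--Hassell--Sikora, and it (rather than any microsupport disjointness) is what the proposition implicitly relies on.
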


Therefore, to prove Theorem~\ref{thm:restriction}, for $\blambda
\geq 1$,  we need to establish the estimates
\begin{equation}
\big\|Q_i(\blambda) \phi(P/\blambda) \chi^{is}(\blambda - P) Q_i^*(\blambda)\big\|_{L^2 \rightarrow L^2} \leq C_1
e^{C (1 + |s|)}, \label{22}\end{equation} and for $p \in [1,
2(n+2)/(n+4)]$, we require
\begin{equation}
\big\|Q_i(\blambda) \phi(P/\blambda) \chi^{-n/2 -1 + is}(\blambda - P) Q_i^*(\blambda)\big\|_{L^1 \rightarrow
L^\infty} \leq C_2 \blambda^{n/2} e^{C (1 + |s|)},
\label{psmall}\end{equation} while for $p \in [2(n+2)/(n+4), 2)$, we
require
\begin{equation}
\big\|Q_i(\blambda) \phi(P/\blambda) \chi^{-j -1+ is}(\blambda - P) Q_i^*(\blambda)\big\|_{L^1 \rightarrow
L^\infty} \leq C_2 \blambda^{n/2} e^{C (1 + |s|)}, \quad \text{for
all } j \in \ZZ, \ j \geq \frac{n}{2}. \label{plarge}\end{equation}

Estimate \eqref{22} follows immediately from the sup bound on the multiplier $\chi_+^{is}$:
$$
| \chi_+^{is}(t)| \leq \big| \frac1{\Gamma(is)} \big| \leq e^{\pi |s|/2}.
$$

For the remaining two estimates, we invoke  \cite[Lemma 3.3]{Guillarmou-Hassell-Sikora}, which we repeat here:

\begin{lemma}\label{infty}
Suppose that $k \in \NN$, that $-k < a<b<c$ and that $b=\theta a +(1-\theta)c$. Then there exists a constant  $C$ such that  for any $C^{k-1}$ function $f \colon \RR \to \CC$ with compact support,  one has
$$
\| \chi_+^{b+is}*f\|_\infty \le C(1+|s|)e^{\pi |s|/2} \| \chi_+^{a}*f\|^\theta_\infty  \| \chi_+^{c}*f\|^{1-\theta}_\infty
$$
for all $s\in \RR$.
\end{lemma}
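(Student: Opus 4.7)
The approach is Hadamard's three-lines theorem applied to a scalar holomorphic function in the strip $\{a \le \Re z \le c\}$. Fix $f \in C^{k-1}_c(\RR)$, $x \in \RR$, and $s \in \RR$, and set
$$
F(z) := (\chi_+^{z+is}*f)(x), \qquad \Re z > -k.
$$
Since $f \in C^{k-1}_c$ and $a > -k$, the identity $\chi_+^{w}*f = \chi_+^{w+k-1}*f^{(k-1)}$ provides a convergent integral representation for $\Re w > -k$, so $F$ is holomorphic in the interior of the strip and continuous up to the boundary.

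The key obstruction to applying three-lines directly is the $e^{\pi|\Im z|/2}$-growth of $F$ on vertical lines, coming from Stirling's bound $|\Gamma(1+w)|^{-1} \le Ce^{\pi|\Im w|/2}$. To compensate, I multiply by a Gaussian analytic factor, setting
$$
G(z) := e^{\delta(z-b)^2}F(z)
$$
for a small fixed $\delta > 0$. Then $G(b) = F(b) = (\chi_+^{b+is}*f)(x)$, and $G$ decays like $e^{-\delta(\Im z)^2}$ on each vertical line, so the suprema over $\Im z$ on both edges of the strip are finite and Phragm\'en--Lindel\"of applies.

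Next I estimate $G$ on the two boundary lines. On $\Re z = a$, the convolution identity $\chi_+^\mu*\chi_+^\nu = \chi_+^{\mu+\nu+1}$ lets me write $\chi_+^{a+iu+is}*f = \chi_+^{iu+is-1}*(\chi_+^a * f)$, and combining the one-sided support of $\chi_+^a * f$ with Stirling's asymptotic for $|\Gamma(1+i(u+s))|^{-1}$ should yield
$$
|G(a+iu)| \le Ce^{-\delta u^2}(1+|u+s|)e^{\pi|u+s|/2} \|\chi_+^a*f\|_\infty.
$$
Optimising in $u$ — the Gaussian $e^{-\delta u^2}$ beats the exponential $e^{\pi|u+s|/2}$ at $u = s + O(\delta^{-1})$ — gives $\sup_u|G(a+iu)| \le C(1+|s|)e^{\pi|s|/2}\|\chi_+^a*f\|_\infty$, and the symmetric bound holds on $\Re z = c$. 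Applying Hadamard's three-lines theorem to $G$, using $b = \theta a + (1-\theta)c$, and finally taking the supremum over $x$ delivers the desired inequality.

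The hardest step will be rigorously establishing the endpoint estimate, specifically a bound for the distributional one-sided convolution $\chi_+^{iv-1}*g$ in $L^\infty$ when $g$ is bounded and supported in a half-line. Because $\chi_+^{iv-1}$ is a genuine distribution rather than a locally integrable function, one must combine integration by parts (to transfer the singular factor onto a derivative or primitive of $g$, exploiting that $g = \chi_+^a * f$ inherits enough regularity from $f$ and $a > -k$) with the sharp Stirling asymptotic $|\Gamma(1+iv)|^{-1} \sim (2\pi|v|)^{-1/2}e^{\pi|v|/2}$. This is precisely the source of the factor $(1+|s|)e^{\pi|s|/2}$ on the right-hand side of the lemma.
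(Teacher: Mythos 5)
Your three-lines framework is set up correctly as far as it goes: the family $z\mapsto(\chi_+^{z+is}*f)(x)$ is indeed holomorphic for $\Re z>-k$ via $\chi_+^{w}*f=\chi_+^{w+k-1}*f^{(k-1)}$, the Gaussian factor $e^{\delta(z-b)^2}$ does supply the decay needed for Phragm\'en--Lindel\"of, and the bookkeeping of the factors $(1+|s|)e^{\pi|s|/2}$ is consistent. But the argument collapses at precisely the step you flag as hardest, and the failure is structural, not technical: the boundary estimate $\sup_u e^{-\delta u^2}|(\chi_+^{a+i(u+s)}*f)(x)|\le C(1+|s|)e^{\pi|s|/2}\|\chi_+^a*f\|_\infty$ is false. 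Writing $\chi_+^{a+iv}*f=\chi_+^{iv-1}*g$ with $g=\chi_+^a*f$, the operator $g\mapsto\chi_+^{iv-1}*g$ is a one-sided singular integral with kernel $t^{-1}e^{iv\log t}\mathbf{1}_{\{t>0\}}/\Gamma(iv)$, non-integrable at $t=0$, and such operators are not bounded on $L^\infty$. Concretely, take $a=-1$ (admissible for $k\ge2$), so that $\chi_+^a*f=f$; choosing $f\in C^\infty_c$ with $\|f\|_\infty\le1$ equal near $y=x_1$ to a mollification at scale $\epsilon$ of $\sin(v\log(x_1-y))$, one computes
$$
(\chi_+^{-1+iv}*f)(x_1)=\frac{1}{\Gamma(1+iv)}\int_0^\infty t^{iv}f'(x_1-t)\,dt = -\frac{v}{2\Gamma(1+iv)}\log(1/\epsilon)+O_v(1)\longrightarrow\infty .
$$
Your proposed repair --- integration by parts to shift the singular factor onto a derivative or primitive of $g$ --- cannot close this gap, because the only norm of $g$ permitted on the right-hand side of the lemma is $\|g\|_\infty$; the quantity $\|g'\|_\infty$ that the integration by parts produces is simply not available. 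The obstruction is intrinsic to complex interpolation at the $L^\infty$ endpoint: the case $s=0$, $(a,b,c)=(-3,-2,-1)$ of the lemma is Landau's inequality $\|G'\|_\infty^2\le C\|G\|_\infty\|G''\|_\infty$ (recall $\chi_+^{-j}=\delta_0^{(j-1)}$), which is well known not to follow from a three-lines argument, for exactly the reason exhibited above.

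Note also that the present paper does not prove this lemma; it quotes \cite[Lemma 3.3]{Guillarmou-Hassell-Sikora}, where the proof is real-variable rather than complex-analytic. In outline: using the homogeneity $\chi_+^w(\lambda t)=\lambda^w\chi_+^w(t)$ one rescales $f$ so that $\|\chi_+^a*f\|_\infty=\|\chi_+^c*f\|_\infty$, then splits the convolution at unit scale, controlling the near part through $\chi_+^c*f$ (via $\chi_+^{b+is}=\chi_+^{b-c-1+is}*\chi_+^c$, with the singular factor regularized using $\chi_+^{\mu-1}=(\chi_+^{\mu})'$ and the compact support of $f$) and the far part through $\chi_+^a*f$. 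Undoing the rescaling --- equivalently, splitting at scale $R$ and optimizing $R=(\|\chi_+^a*f\|_\infty/\|\chi_+^c*f\|_\infty)^{1/(c-a)}$ so that the two bounds $R^{c-b}\|\chi_+^c*f\|_\infty$ and $R^{a-b}\|\chi_+^a*f\|_\infty$ balance --- is what produces the exponents $\theta=(c-b)/(c-a)$ and $1-\theta$, exactly as in the classical proof of Landau's inequality. The Stirling asymptotic $|\Gamma(1+iv)|^{-1}\sim(2\pi|v|)^{-1/2}e^{\pi|v|/2}$ enters through pointwise bounds on the kernels $\chi_+^{b-c-1+is}$ and $\chi_+^{b-a-1+is}$, not through growth on vertical lines. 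If you want to salvage a complex-interpolation proof you would have to interpolate between operator bounds that are themselves already of the split-and-optimize type, at which point the three-lines theorem is doing no work.
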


Before proving \eqref{psmall} and \eqref{plarge}, we first rewrite $\phi(P/\blambda) \chi_+^{b + is}(\blambda - P)$ as a convolution. \begin{eqnarray*}\lefteqn{\phi(P/\blambda) \chi_+^{b + is}(\blambda - P)} \\ &=&\int \phi(\bsigma/\blambda)\chi_+^{b + is + k - 1}\ast \chi_+^{-k}(\blambda -\bsigma) dE_P(\bsigma) d\bsigma \\ &=& \iint \phi(\bsigma/\blambda) \chi_+^{b + is + k - 1}(\alpha) \chi_+^{-k} (\blambda - \bsigma - \alpha) dE_P(\bsigma) \,d\bsigma d\alpha \\ &=& \blambda^{b + is + 1} \iint \phi(\bsigma) \chi_+^{b + is + k - 1}(\alpha) \chi_+^{-k} (1 - \bsigma - \alpha) dE_P(\blambda\bsigma) \,d\bsigma d\alpha \\ &=& \blambda^{b + is + 1} \int \chi_+^{b + is + k - 1}(\alpha)    \frac{d^{k - 1}}{d \bsigma^{k - 1}}\big(\phi(\bsigma)  dE_P(\blambda\bsigma)\big)\bigg|_{\bsigma = 1 - \alpha} \,d\bsigma d\alpha  \\ 
&=& \blambda^{b + is + 1} \Big(  \chi_+^{b + is + k - 1}\ast   \big(\phi(\cdot)  dE_P(\blambda\cdot)\big)^{(k - 1)} \Big) (1).  \end{eqnarray*}

To prove \eqref{psmall}, we rewrite the function $\chi_+^{- n/2 - 1 + is}$ as 
$$
\chi_+^{-n/2 -1 + is} = \left\{\begin{array}{ll}\chi_+^{-3/2 + is} \ast \chi_+^{-k} & n + 1 = 2k\\ \chi_+^{-2 + is} \ast \chi_+^{-k} & n + 1 = 2k + 1\end{array}\right.
$$

Then we microlocalize the operator $\phi(\sqrt{L}/\blambda)\chi_+^{-n/2 - 1 + is}(\blambda - \sqrt{L})$ and apply Lemma \ref{infty}. In the following calculations, the operators (that is, their Schwartz kernels) are evaluated at the point $(z, z')$, which we do not always indicate in notation. 

\begin{eqnarray*}
\lefteqn{\bigg|Q(\blambda)\phi(\sqrt{L}/\blambda)\chi_+^{-n/2 - 1 + is}(\blambda - \sqrt{L})Q^*(\blambda)\bigg|}\\ 
&=& \bigg|\blambda^{-k + is}\int \chi_+^{- 2 + is}(\alpha) Q(\blambda) \frac{d^{k - 1}}{d \bsigma^{k - 1}}\Big(\phi(\bsigma) dE_{\sqrt{L}}(\blambda\bsigma)\Big)\bigg|_{\bsigma = 1 - \alpha} Q^*(\blambda)\,  d\alpha \bigg|\\ 
&\leq& C_s\blambda^{-k} \sup_{\Lambda}\bigg|\int \chi_+^{- 1}(\alpha) Q(\blambda) \frac{d^{k - 1}}{d \bsigma^{k - 1}}\Big(\phi(\bsigma) dE_{\sqrt{L}}(\blambda\bsigma)\Big)\bigg|_{\bsigma = \Lambda - \alpha} Q^*(\blambda)\,  d\alpha\bigg|^{1/2}\\ 
& & \quad\quad \times \sup_{\Lambda}\bigg|\int \chi_+^{- 3}(\alpha) Q(\blambda) \frac{d^{k - 1}}{d \bsigma^{k - 1}}\Big(\phi(\bsigma) dE_{\sqrt{L}}(\blambda\bsigma)\Big)\bigg|_{\bsigma = \Lambda - \alpha} Q^*(\blambda)\,  d\alpha\bigg|^{1/2} 
\end{eqnarray*}
\begin{eqnarray*}
&\leq& C_s\blambda^{-n/2} \sup_{\Lambda}\bigg| Q(\blambda) \frac{d^{k - 1}}{d \bsigma^{k - 1}}\Big(\phi(\bsigma) dE_{\sqrt{L}}(\blambda\bsigma)\Big)\bigg|_{\bsigma = \Lambda} Q^*(\blambda)\bigg|^{1/2}\\ 
& & \quad\quad \quad\quad \times \sup_{\Lambda}\bigg| Q(\blambda) \frac{d^{k + 1}}{d \bsigma^{k + 1}}\Big(\phi(\bsigma) dE_{\sqrt{L}}(\blambda\bsigma)\Big)\bigg|_{\bsigma = \Lambda } Q^*(\blambda)\bigg|^{1/2}. 
\end{eqnarray*}

We now plug in Corollary \ref{changed spectral measure estimates} and get
\begin{eqnarray*}\lefteqn{\bigg|Q(\blambda)\phi(\sqrt{L}/\blambda)\chi_+^{-n/2 - 1 + is}(\blambda - \sqrt{L})Q^*(\blambda)\bigg|} \\&\leq&C_s \blambda^{n/2} \sup_{\Lambda\in \text{supp}\phi}\bigg|\sum_{l = 0 \dots k - 1} \Lambda^{n - l} (1 + \blambda \Lambda d(z, z'))^{- n/2 + l}\bigg|^{1/2}\\ & & \quad\quad \quad\quad\,\sup_{\Lambda \in \text{supp}\phi}\bigg|\sum_{l = 0 \dots k + 1}\Lambda^{n - l}(1 + \blambda\Lambda d(z, z'))^{- n/2 + l}\bigg|^{1/2}\\ &\leq&C_s \blambda^{n/2},\end{eqnarray*} provided $d(z, z')$ is small. On the other hand, if $d(z, z')$ is large, \begin{eqnarray*}
\lefteqn{\bigg|Q(\blambda)\phi(\sqrt{L}/\blambda)\chi_+^{-n/2 - 1 + is}(\blambda - \sqrt{L})Q^*(\blambda)\bigg|}\\ &\leq& C_s  \sup_{\Lambda\in \text{supp}\phi}\bigg|\sum_{l = 0 \dots k - 1} \Lambda^{n/2 - l} d(z, z')^le^{-nd(z, z')}\bigg|^{1/2}\bigg| \sum_{l = 0 \dots k + 1} \Lambda^{n/2 - l} d(z, z')^le^{-nd(z, z')}\bigg|^{1/2}\\ &\leq&C_s \blambda^{n/2}. \end{eqnarray*}

When $n = 2k$, the proof is identical, apart from using the other expression of the operator.\begin{eqnarray*}
\phi(\sqrt{L}/\blambda)\chi_+^{-n/2 -1 + is}(\blambda - \sqrt{L}) = \blambda^{- n/2 + is}\int \chi_+^{- 3/2 + is}(\alpha)  \frac{d^{k - 1}}{d \bsigma^{k - 1}}\Big(\phi(\bsigma) dE_{\sqrt{L}}(\blambda\bsigma)\Big)\bigg|_{\bsigma = 1 - \alpha}\, d\alpha .\end{eqnarray*}
We use the same argument and get 																																																			\begin{eqnarray*}
\lefteqn{\bigg|Q(\blambda)\phi(\sqrt{L}/\blambda)\chi_+^{- n/2 - 1 + is}(\blambda - \sqrt{L})Q^*(\blambda)\bigg|}\\ &=& \bigg|\blambda^{- n/2 + is}\int \chi_+^{- 3/2 + is}(\alpha) Q(\blambda) \frac{d^{k - 1}}{d \bsigma^{k - 1}}\Big(\phi(\bsigma) dE_{\sqrt{L}}(\blambda\bsigma)\Big)\bigg|_{\bsigma = 1 - \alpha} Q^*(\blambda)\,  d\alpha \bigg|\\  &\leq& C_s\blambda^{-n/2} \sup_{\Lambda}\bigg| Q(\blambda) \frac{d^{k - 1}}{d \bsigma^{k - 1}}\Big(\phi(\bsigma) dE_{\sqrt{L}}(\blambda\bsigma)\Big)\bigg|_{\bsigma = \Lambda} Q^*(\blambda)\bigg|^{1/2}\\ & & \quad\quad \quad\quad\times\sup_{\Lambda}\bigg| Q(\blambda) \frac{d^{k }}{d \bsigma^{k}}\Big(\phi(\bsigma) dE_{\sqrt{L}}(\blambda\bsigma)\Big)\bigg|_{\bsigma = \Lambda } Q^*(\blambda)\bigg|^{1/2} \\ &\leq&C_s \blambda^{n/2}.\end{eqnarray*}

To prove \eqref{plarge},  we rewrite the function $\chi_+^{- j - 1 + is}$ as 
$$
\chi_+^{- j -1 + is} = \chi_+^{-2 + is} \ast \chi_+^{- j} .
$$
Then we again apply Lemma \ref{infty} to the microlocalized operator $Q(\blambda)\phi(\sqrt{L}/\blambda)\chi_+^{- j - 1 + is}(\blambda - \sqrt{L})Q^*(\blambda)$ and get\begin{eqnarray*}
\lefteqn{\bigg|Q(\blambda)\phi(\sqrt{L}/\blambda)\chi_+^{- j - 1 + is}(\blambda - \sqrt{L})Q^*(\blambda)\bigg|}\\ &=& \bigg|\blambda^{- j + is}\int \chi_+^{- 2 + is}(\alpha) Q(\blambda) \frac{d^{j - 1}}{d \bsigma^{j - 1}}\Big(\phi(\bsigma) dE_{\sqrt{L}}(\blambda\bsigma)\Big)\bigg|_{\bsigma = 1 - \alpha} Q^*(\blambda)\,  d\alpha \bigg|\\ &\leq& C_s\blambda^{-j} \sup_{\Lambda}\bigg|\int \chi_+^{- 1}(\alpha) Q(\blambda) \frac{d^{j - 1}}{d \bsigma^{j - 1}}\Big(\phi(\bsigma) dE_{\sqrt{L}}(\blambda\bsigma)\Big)\bigg|_{\bsigma = \Lambda - \alpha} Q^*(\blambda)\,  d\alpha\bigg|^{1/2}\\ & & \quad\quad\times\sup_{\Lambda}\bigg|\int \chi_+^{- 3}(\alpha) Q(\blambda) \frac{d^{j - 1}}{d \bsigma^{j - 1}}\Big(\phi(\bsigma) dE_{\sqrt{L}}(\blambda\bsigma)\Big)\bigg|_{\bsigma = \Lambda - \alpha} Q^*(\blambda)\,  d\alpha\bigg|^{1/2} \\ &\leq& C_s\blambda^{- j} \sup_{\Lambda}\bigg| Q(\blambda) \frac{d^{j - 1}}{d \bsigma^{j - 1}}\Big(\phi(\bsigma) dE_{\sqrt{L}}(\blambda\bsigma)\Big)\bigg|_{\bsigma = \Lambda} Q^*(\blambda)\bigg|^{1/2}\\ & & \quad\quad \quad\quad\times\sup_{\Lambda}\bigg| Q(\blambda) \frac{d^{j + 1}}{d \bsigma^{j + 1}}\Big(\phi(\bsigma) dE_{\sqrt{L}}(\blambda\bsigma)\Big)\bigg|_{\bsigma = \Lambda } Q^*(\blambda)\bigg|^{1/2}. 
\end{eqnarray*}

As in the first case, the spectral measure estimates give
\begin{eqnarray*}
\lefteqn{\bigg|Q(\blambda)\phi(\sqrt{L}/\blambda)\chi_+^{- j - 1 + is}(\blambda - \sqrt{L})Q^*(\blambda)\bigg|}\\ &\leq&C_s \blambda^{n - j} \sup_{\Lambda\in \text{supp}\phi}\bigg|\sum_{l = 0 \dots j - 1} \Lambda^{n - l} (1 + \blambda \Lambda d(z, z'))^{- n/2 + l}\bigg|^{1/2}
\\ & & \quad\quad \quad\quad\times\sup_{\Lambda \in \text{supp}\phi}\bigg|\sum_{l = 0 \dots j + 1}\Lambda^{n - l}(1 + \blambda\Lambda d(z, z'))^{- n/2 + l}\bigg|^{1/2}\\ 
&\leq&C_s \blambda^{n/2},\end{eqnarray*} 
provided $d(z, z')$ is small. On the other hand, if $d(z, z')$ is large, \begin{eqnarray*}
\lefteqn{\bigg|Q(\blambda)\phi(\sqrt{L}/\blambda)\chi_+^{- j - 1 + is}(\blambda - \sqrt{L})Q^*(\blambda)\bigg|}\\ &\leq& C_s \blambda^{n/2 - j} \sup_{\Lambda\in \text{supp}\phi}\bigg|\sum_{l = 0 \dots j - 1} \Lambda^{n/2 - l} d(z, z')^le^{-nd(z, z')}\bigg|^{1/2}\bigg| \sum_{l = 0 \dots j + 1} \Lambda^{n/2 - l} d(z, z')^le^{-nd(z, z')}\bigg|^{1/2}\\ &\leq&C_s \blambda^{n/2} .\end{eqnarray*}

\begin{remark}
Notice that it is here that we gain an advantage by working on an
asymptotically hyperbolic rather than conic space: the exponential
decay $e^{-nd/2}$ in the large distance estimate kills the polynomial growth $d^{j}$ caused by $j$
differentiations of the phase function $e^{\pm i\blambda d}$, so
there is no limit to the number of differentiations that we can
consider. On an asymptotically conic manifold, however, if we
differentiate more than $(\dim X - 1)/2$ times, we get a growing
kernel as $d(z,z') \to \infty$, and no $L^1 \to L^\infty$ estimate
is possible.
\end{remark}


\section{Spectral multipliers}\label{sec:sm}

In this section we prove Theorem~\ref{thm:sm}, assuming that $(X^\circ, g)$ is a Cartan-Hadamard manifold, as well as being asymptotically hyperbolic and nontrapping, with no resonance at the bottom of the continuous spectrum.

\subsection{A geometric lemma}\label{subsec:meas}
In order to adapt the proof from Section~\ref{sec:model}, we need to establish comparability between the Riemannian measure on hyperbolic space, and the Riemannian measure on $(X^\circ, g)$, as expressed in polar coordinates. Recall that on a Cartan-Hadamard manifold, the exponential map from $T_p X$, $p \in X$ to $X$ is a diffeomorphism from $T_pX$ to $X$. Thus, the metric on $X$ can be expressed globally in polar normal  coordinates based at $p$. Let $r$ be the distance, and $\omega \in S^n$, be polar normal  coordinates based at $p$.

\begin{lemma}\label{lem:measure}
Let $X$ be an asymptotically hyperbolic Cartan-Hadamard manifold, and let $p \in X$ be any point.
The Riemannian measure on $X$ can be expressed in the form
$$
 m_p(r, \omega) (\sinh r)^n dr d\omega,
 $$
 where $m_p(r, \omega)$ is uniformly bounded on $X \times X$ (that is, uniform in $p$ as well as in $(r, \omega)$).
 \end{lemma}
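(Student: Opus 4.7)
Since $X$ is Cartan-Hadamard, $\exp_p\colon T_pX \to X^\circ$ is a diffeomorphism for every $p$, so polar normal coordinates $(r,\omega) \in (0,\infty) \times S^n$ based at $p$ are global on $X \setminus \{p\}$. Writing $dV_g = J_p(r,\omega)\,dr\,d\omega$, where $J_p$ is the Jacobian of $\exp_p$ at $r\omega$, the task reduces to bounding $m_p = J_p/(\sinh r)^n$ uniformly in $(p, r, \omega)$. For $r \leq 1$ this is immediate from the Taylor expansion $J_p(r,\omega) = r^n(1+O(r^2))$, with error controlled by a uniform upper bound on the curvature tensor (which exists because the curvature of $g$ extends smoothly to the compact manifold with boundary $\overline X$).

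For $r \geq 1$ I pass to the Jacobi field formalism. Along $\gamma_\omega(r) = \exp_p(r\omega)$, let $A(r)$ be the normal Jacobi field matrix with $A(0) = 0$, $A'(0) = I$; then $J_p = \det A$, and $B = A'A^{-1}$ satisfies the matrix Riccati equation $B' + B^2 + R = 0$, where $R(r)$ is the curvature endomorphism restricted to $(\gamma_\omega'(r))^\perp$. The hyperbolic comparison, $R_0 \equiv -I$, gives $B_0 = \coth(r)I$ and $\det A_0 = (\sinh r)^n$. The asymptotically hyperbolic condition implies $\|R(r)+I\|_{\mathrm{op}} \leq C\,\rho(\gamma_\omega(r))$ uniformly, where $\rho$ is a global smooth boundary defining function for $\partial X$ on $\overline X$.

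The crux of the argument is the uniform $L^1$ bound
\begin{equation}
\sup_{p \in X^\circ,\ \omega \in S^n}\int_0^\infty \rho(\gamma_\omega(r))\,dr \leq K. \label{L1bdPlan}
\end{equation}
Granting \eqref{L1bdPlan}, set $W = B - \coth(r)I$; the Riccati equation gives $W' + W(B + \coth(r)\,I) = -(R+I)$, and since $B + \coth r \cdot I$ has nonnegative trace in the Cartan-Hadamard setting, a Gronwall-type argument combined with \eqref{L1bdPlan} yields $\|W(r)\| \leq C_1$ for $r \geq 1$. Taking traces and integrating $(\log\det A)' = \operatorname{tr}B$ against $(\log\det A_0)' = n\coth r$ gives $\log\det A(r) - n\log\sinh r = O(1)$ uniformly in $p$ and $\omega$, which is the required pointwise bound on $m_p$.

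The main obstacle is \eqref{L1bdPlan}. The distance formula of Proposition~\ref{prop:dist}, namely $d(p,z) = -\log(\rho_L(p)\rho_R(z)) + b(p,z)$ with $b$ uniformly bounded on $X^2_0$, shows that $\rho(\gamma_\omega(r))$ decays exponentially in $r$ once $\gamma_\omega(r)$ has entered a neighbourhood of $\partial X$. Combined with the compactness of $\partial X$ and the Cartan-Hadamard structure, which forces every geodesic to traverse the ``bulk'' part of $X$ in uniformly bounded time (and to reach $\partial X$ in both directions), this yields \eqref{L1bdPlan}; the subtlety is that the uniform constant $K$ must be controlled independently of $x(p)$, which is done by a careful accounting of the contributions to the integral coming from the compact core of $X$ versus the asymptotic region near $\partial X$.
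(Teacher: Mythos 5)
Your argument is correct in outline but follows a genuinely different route from the paper. The paper does not touch Jacobi fields at all: it reads the measure comparison off the principal symbol of the high-energy resolvent constructed in \cite{Chen-Hassell1}. On a Cartan--Hadamard manifold the flowout Lagrangian $\Lambda_+$ projects diffeomorphically to $X^2_0$ away from the diagonal, so its symbol can be written both as $\sim(\rho_L\rho_R)^{n/2}\,|dg\,dg'|^{1/2}$ (the normalization on $X^2_0$, converted to $e^{-nr/2}|dg\,dg'|^{1/2}$ via Proposition~\ref{prop:dist}) and as a constant multiple of $|dg'\,dr\,d\omega|^{1/2}$ (by solving the transport equation in the flowout coordinates $(z',r,\omega)$ and matching with the symbol on $N^*\diagz$); equating the two gives $dg\sim e^{nr}dr\,d\omega$. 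Your Riccati comparison is more elementary and independent of the resolvent construction, at the price of having to prove the uniform $L^1$ bound $\int_0^\infty \rho(\gamma_\omega(r))\,dr\le K$ from scratch. That bound is genuinely available --- the total time a complete geodesic spends in the compact core $\{x\ge\epsilon\}$ is at most the diameter of that core because geodesics on a Cartan--Hadamard manifold are globally minimizing, and each excursion into $\{x<\epsilon\}$ contributes $O(\epsilon)$ by the decay estimates established in the proof of Proposition~\ref{prop:dist} --- but as written it is only asserted, and it is the real content of your approach, so it needs to be carried out.

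One intermediate step is misstated. From Gronwall you conclude only $\|W(r)\|\le C_1$; but a uniform \emph{pointwise} bound on $W=B-\coth(r)I$ gives $|\operatorname{tr}W|\le nC_1$ and hence $|\log\det A(r)-n\log\sinh r|=O(r)$, not $O(1)$. (Indeed $\|W\|\le C$ already follows from the two-sided sectional curvature bounds alone, with no use of the $L^1$ hypothesis, and is not sufficient.) What you need, and what your scheme does in fact deliver, is the \emph{integrated} bound $\int_1^\infty\|W(r)\|\,dr\le C$ uniformly in $(p,\omega)$: writing $f(r)=\|R(r)+I\|$, the largest eigenvalue $\mu$ of $W$ satisfies $\mu'\le-\mu^2-2\coth(r)\mu+f\le-2\mu+f$ wherever $\mu>0$ and $r\ge1$, while for the smallest eigenvalue one uses $0\le B$ (Cartan--Hadamard) to write $\nu=-\eta$ with $0\le\eta\le\coth r$, whence $\eta'\le\eta^2-2\coth(r)\eta+f\le-\eta+f$; both parts therefore decay convolution-fashion against the $L^1$ function $f$ and are uniformly integrable on $[1,\infty)$. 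With that correction, integrating $\operatorname{tr}W$ gives $\log\det A-n\log\sinh r=O(1)$ and the lemma follows.
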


 \begin{proof}
 This result can be extracted from the resolvent construction in \cite{Chen-Hassell1}.
 Recall that in that paper, the outgoing resolvent $(h^2 \Delta - h^2 n^2/4 - (1 - i0))^{-1}$ was shown to be a sum of terms, the principal one of which is a semiclassical intersecting Lagrangian distribution $I^{1/2}(X^2_0, (N^* \diagz, \Lambda_+); \Omegazh)$. Here $\Lambda_+ = \Lambda_+^{nd} \cup \Lambda_+^*$ is the closure of the forward bicharacteristic relation, in a certain sense. In the case of a Cartan-Hadamard manifold, the projection ${}^\Phi \pi : {}^\Phi T^* X^2_0 \to X^2_0$ restricts to a diffeomorphism from $\Lambda_+ \setminus N^* \diagz$ to $X^2_0 \setminus \diagz$; that is, except over the diagonal, $\Lambda_+$ projects diffeomorphically to the base $X^2_0$. We also point out that there is no need to decompose $\Lambda_+$ into pieces $\Lambda_+^{nd} \cup \Lambda_+^*$ as was done in \cite{Chen-Hassell1} to deal with geodesics that might `return' to the front face  $\FF$; this is not possible for Cartan-Hadamard manifolds.

 The Lagrangian $\Lambda_+$ can be given coordinates as follows: first, we use coordinates $(z', \omega)$ for $\Lambda_+ \cap N^* \diagz$, where $z'$ is a coordinate in $X^\circ$ (corresponding to the right variable in $X^2_0$) and  $\omega \in S^n$ is a coordinate on the unit tangent bundle in $T_{z'} X^\circ$, with respect to the metric $g$. Then by definition $\Lambda_+$ is the flowout from $\Lambda_+ \cap N^* \diagz$ by bicharacteristic flow, which coincides with geodesic flow in this case. Let $r$ denote the function on $\Lambda_+$ equal to the time taken to flow to that point from $\Lambda_+ \cap N^* \diagz$ by the left geodesic flow. This gives us $(z', r, \omega)$ as coordinates on $\Lambda_+$. Then, using the projection ${}^\Phi \pi$ to the base, $(r, \omega)$ may be identified with polar normal coordinates based at $z'$.

 Now consider the principal symbol at $\Lambda_+$. By \cite{Chen-Hassell1}, if we use coordinates $(z, z')$ arising from $X \times X$ on $\Lambda_+$ (away from $N^* \diagz$), then the principal symbol is  $\sim (\rho_L \rho_R)^{n/2}$ times  $|dg(z) dg'(z')|^{1/2}$, where $dg$ ($dg'$) indicate the Riemannian measure in the left (right) variables, and we use the notation $a \sim b$ to mean that $C^{-1} b \leq a \leq C b$ for some uniform $C$. Next we recall from Proposition~\ref{prop:dist} that the distance $r$ on $X^2_0$ is such that
$e^{-nr/2} \sim (\rho_L \rho_R)^{n/2}$ for $r \geq 1$.  It follows that the principal symbol is comparable to
\begin{equation}
e^{-nr/2} |dg dg'|^{1/2}
\label{prs1}\end{equation}
on $\Lambda_+$, for $r \geq 1$.

On the other hand, the principal symbol  $a$ satisfies the transport equation
$$
\mathcal{L}_{\partial_r} a = 0,
$$
in the coordinates $(z', r, \omega)$. Since $a$ is a half-density,  it must take the form
$$
\Big| b(z', \omega) dz' dr d\omega \Big|^{1/2}.
$$
We can compute $b(z', \omega)$  by comparing with the symbol of the resolvent at $N^* \diag$. Using coordinates $(z', \omega, \tau)$, where $\tau$ is the norm on $T^*_{z'} X$ with respect to the metric $g$ (that is, $(\omega, \tau)$ are polar coordinates in $T^*_{z'} X$), this symbol is $(\tau^2 - 1)^{-1}|dg' \tau^n d\tau d\omega|^{1/2}$. A simple calculation shows that ${\tau, r} = 1$.  Using \cite[Equation (B.2)]{Chen-Hassell1}, we find that
\begin{equation}
a = c \,  \Big| dg' dr d\omega \Big|^{1/2}, \quad c > 0 \text{ constant.}
\label{prs2}\end{equation}
Comparing \eqref{prs1} and \eqref{prs2}, we find that
$$
dg \sim e^{nr} dr d\omega \text{ for } r \geq 1,
$$
which completes the proof.
\end{proof}

So, let $F \in H^s([-1, 1])$, $s > (n+1)/2$, be an even function. We consider the operator $F(\alpha P)$, where $\alpha \in (0, 1]$. To analyze this operator, we break the Schwartz kernel into two pieces using the characteristic function $\chi_{d(z,z') \leq 1}$. The near-diagonal piece $F(\alpha P)\chi_{d(z,z') \leq 1}$ can be treated using the methods from \cite{Guillarmou-Hassell-Sikora}; this operator essentially satisfies Theorem \ref{restriction2multiplier}. The far-from-diagonal piece, $F(\alpha P)(1-\chi_{d(z,z') \leq 1})$, can be treated rather like the case of hyperbolic space studied in Section~\ref{sec:model}.

\subsection{Near diagonal part of $F(\alpha P)$}\label{subsec:dist<1}
Theorem~\ref{restriction2multiplier} does not apply directly in the current setting, since the volume of balls of radius $\rho$ are not comparable to $\rho^{n+1}$ for large $\rho$ on asymptotically hyperbolic manifolds; instead,  the volume grows as $e^{n\rho}$ as $\rho \to \infty$. However, it is certainly the case that the volume of balls of radius $\rho \leq 1$ is comparable to $\rho^{n+1}$. This follows from the Bishop-Gromov inequality: if the sectional curvatures are between $0$ and $-\kappa$, say, then the volume of any ball of radius $\rho$ is bounded by the volume in Euclidean space, and the volume on a simply connected space of constant curvature $-\kappa$.

The place where this volume comparability was used in \cite{Guillarmou-Hassell-Sikora} was in the proof of the following Lemma, which we modify so as to apply to our near-diagonal operator.

\begin{lemma}[{\cite[Lemma 2.7]{Guillarmou-Hassell-Sikora}}]
\label{lem:doubling}
Suppose that $(X, d, \mu)$ is a metric measure space, with metric $d$ and doubling measure $\mu$, such that the balls of radius $\rho \leq 1$ have measure comparable to $\rho^{n+1}$. Assume that $S$ is an integral operator, bounded from $L^p(X)$ to $L^q(X)$ for some $1 \leq p < q \leq \infty$. Let $S \chi_{d(z,z') \leq s}$, be the integral operator given by the integral kernel of $S$ times the characteristic function of $\{ (z,z') \mid  d(z,z') \leq s \}$, for some $s \leq 1$. Then $$\|S \chi_{d(z,z') \leq s}\|_{L^p \rightarrow L^p} \leq C s^{(n + 1)(1/p - 1/q)} \|S\|_{L^p \rightarrow L^q}.$$
\end{lemma}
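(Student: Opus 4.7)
The plan is to exploit the fact that the kernel of $S\chi_{d(z,z')\leq s}$ is supported within distance $s$ of the diagonal, so that at each point $z$ the action of the operator only ``sees'' $f$ restricted to a small ball, whose measure is controlled by $s^{n+1}$; a local H\"older inequality then converts the $L^p\to L^q$ bound of $S$ into an $L^p\to L^p$ bound with a gain of $s^{(n+1)(1/p-1/q)}$.

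More precisely, I would first cover $X$ by a countable family of balls $B_i = B(x_i, s)$ of radius $s \leq 1$ with bounded overlap, chosen so that the doubled family $\{2B_i\}$ also has overlap bounded by a constant $N$ depending only on the doubling constant of $\mu$ (such a cover exists by a standard Vitali-type argument, and the doubling hypothesis guarantees the overlap bound is uniform for $s \leq 1$). The key observation is that if $z \in B_i$, then $B(z,s) \subset 2B_i$, so for such $z$,
\begin{equation*}
(S\chi_{d(z,z')\leq s} f)(z) = \bigl(S(f\chi_{2B_i})\bigr)(z).
\end{equation*}

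Second, I would estimate the $L^p$ norm of the output on each $B_i$ separately by H\"older's inequality, using that $\mu(B_i) \leq C s^{n+1}$:
\begin{equation*}
\|S(f\chi_{2B_i})\|_{L^p(B_i)} \leq \mu(B_i)^{1/p-1/q}\,\|S(f\chi_{2B_i})\|_{L^q(B_i)} \leq C s^{(n+1)(1/p-1/q)}\,\|S\|_{L^p\to L^q}\,\|f\chi_{2B_i}\|_{L^p(X)}.
\end{equation*}

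Finally, summing the $p$-th powers over $i$, using the covering property $\|Tf\|_{L^p}^p \leq \sum_i \|Tf\|_{L^p(B_i)}^p$ together with the bounded overlap $\sum_i \|f\chi_{2B_i}\|_{L^p}^p \leq N\|f\|_{L^p}^p$, yields the claimed bound. No genuine obstacle arises: the only ingredient beyond bookkeeping is the existence of the bounded-overlap cover, and this is standard for doubling metric measure spaces (and here we only need doubling at scales $\leq 1$, which is guaranteed since balls of radius $\rho \leq 1$ have measure comparable to $\rho^{n+1}$).
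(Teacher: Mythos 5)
The paper itself does not write out a proof of Lemma~\ref{lem:doubling} (it defers to \cite[Lemma 2.7]{Guillarmou-Hassell-Sikora}), so the comparison is with that argument, whose skeleton — bounded-overlap cover at scale $s$, local H\"older using $\mu(B_i)\leq Cs^{n+1}$, summation using bounded overlap — you have correctly reproduced. However, your key identity
$(S\chi_{d(z,z')\leq s}f)(z)=\bigl(S(f\chi_{2B_i})\bigr)(z)$ for $z\in B_i$ is false. The left-hand side integrates the kernel $K(z,z')$ against $f$ over $\{z':d(z,z')\leq s\}=B(z,s)$, while the right-hand side integrates it over the strictly larger set $2B_i$; since nothing whatsoever is assumed about $K$ off the strip $\{d\leq s\}$, the extra contribution from $2B_i\setminus B(z,s)$ is uncontrolled (and has no sign). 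What is true is $(S\chi_{d\leq s}f)(z)=\bigl((S\chi_{d\leq s})(f\chi_{2B_i})\bigr)(z)$: you may localize the \emph{input}, but you cannot drop the kernel truncation. Carrying the corrected identity through your H\"older step produces $\|S\chi_{d\leq s}\|_{L^p\to L^q}$ on the right-hand side rather than $\|S\|_{L^p\to L^q}$, and the former is not among the hypotheses; there is no a priori control of the $L^p\to L^q$ norm of a kernel-truncated operator by that of the original operator, so the argument does not close.

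This is precisely where the present lemma is more delicate than \cite[Lemma 2.7]{Guillarmou-Hassell-Sikora}: there the operator $T$ is \emph{assumed} to have kernel already supported in $\{d(z,z')\leq s\}$ and to be bounded $L^p\to L^q$, and the conclusion bounds $\|T\|_{L^p\to L^p}$ by $s^{(n+1)(1/p-1/q)}\|T\|_{L^p\to L^q}$ for the \emph{same} $T$; the localization $Tf(z)=T(f\chi_{2B_i})(z)$ is then an honest identity because the kernel itself vanishes on $2B_i\setminus B(z,s)$. Your scheme does close in the endpoint cases $p=1$ or $q=\infty$, where the operator norm is computed pointwise from the kernel (for instance, for $q=\infty$ one has $|(S\chi_{d\leq s}f)(z)|=|(S(f\chi_{B(z,s)}))(z)|\leq \|S\|_{L^p\to L^\infty}\|f\|_{L^p(B(z,s))}$, and integrating the $p$-th power in $z$ and using Fubini gives the claim). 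For $1<p<q<\infty$, however, you need either to assume that the kernel of $S$ is itself supported in $\{d\leq s\}$ (which is the form in which the lemma is actually exploited in the spectral multiplier argument, via finite propagation speed of $\cos(tP)$), or to supply a separate argument bounding $\|S\chi_{d\leq s}\|_{L^p\to L^q}$ by $C\|S\|_{L^p\to L^q}$. As written, the proof has a genuine gap at this point.
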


\begin{proof} We omit the proof, which is a trivial modification of the proof of \cite[Lemma 2.7]{Guillarmou-Hassell-Sikora}.
\end{proof}

Using this lemma we prove a modified version of Theorem~\ref{restriction2multiplier} in an abstract setting.

\begin{proposition}
Let $(X, d, \mu)$ be as in Lemma~\ref{lem:doubling}.
Suppose $\Delta$ is a positive self-adjoint operator  with finite propagation speed on $L^2(X)$. If the restriction estimate
\begin{equation}
\|dE_{\sqrt{\Delta}}(\blambda)\|_{L^p \rightarrow L^{p^\prime}} \leq \left\{ \begin{array}{ll}  C  &  \mbox{when $\blambda$ is small,} \\  C \blambda^{(n + 1)(1/p - 1/p') - 1}  &  \mbox{when $\blambda$ is large} \end{array} \right.
\label{Cre}\end{equation}
 holds for $1 \leq p \leq 2(n + 2)/(n + 4)$, then spectral multipliers {localized near the diagonal} are uniformly bounded in $0 < \alpha < 1$, in the sense $$\sup_{0 < \alpha < 1} \|F(\alpha \sqrt{\Delta}){\chi_{d(z,z') \leq 1}}\|_{L^p \rightarrow L^p} \leq C \|F\|_{H^s},$$ where $F \in H^s(\mathbb{R})$ is an even function  with $s > (n + 1) (1/p - 1/2)$ supported in $[-1, 1]$.\end{proposition}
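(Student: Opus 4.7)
The plan is to adapt the proof of Theorem~\ref{restriction2multiplier} from \cite{Guillarmou-Hassell-Sikora}, with two modifications dictated by the present setting: (i) the restriction hypothesis \eqref{Cre} has two regimes (bounded for small spectral parameter, Stein--Tomas scaling for large spectral parameter), so the analysis must be split based on whether the relevant spectral scale lies above or below $1$; and (ii) since the underlying measure is only locally doubling, the diagonal cutoff $\chi_{d(z,z')\leq 1}$ combined with Lemma~\ref{lem:doubling} is what converts $L^p \to L^{p'}$ bounds (the natural output of the restriction theorem) into $L^p \to L^p$ bounds.

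The first step is a Littlewood--Paley decomposition of $F$ in the Fourier variable: pick $\psi \in C_c^\infty(\{1/2 \leq |t| \leq 2\})$ and $\psi_0 \in C_c^\infty(\{|t| \leq 1\})$ with $\psi_0(t) + \sum_{k \geq 1}\psi(2^{-k}|t|) = 1$, and set $F^{(k)} = \mathcal{F}^{-1}\bigl(\psi(2^{-k}|\cdot|)\hat{F}\bigr)$. By Plancherel $\|F^{(k)}\|_{L^2} \leq C\,2^{-ks}\|F\|_{H^s}$. Since $\widehat{F^{(k)}}$ is supported in $|t| \sim 2^k$, the finite propagation speed of $\cos(t\sqrt{\Delta})$ implies that the Schwartz kernel of $F^{(k)}(\alpha\sqrt{\Delta})$ is supported where $d(z,z') \leq C\,2^k\alpha$. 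Moreover, since $F$ is compactly supported and the LP kernel is Schwartz, each $F^{(k)}$ retains rapid (super-polynomial) spatial decay outside $[-2,2]$.

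For each piece, apply the $TT^*$ inequality $\|F^{(k)}(\alpha\sqrt{\Delta})\|_{L^p \to L^{p'}}^2 \leq \|[F^{(k)}]^2(\alpha\sqrt{\Delta})\|_{L^p \to L^{p'}}$ and dominate the right-hand side by $\int |F^{(k)}(\alpha\lambda)|^2\|dE_{\sqrt{\Delta}}(\lambda)\|_{L^p \to L^{p'}}d\lambda$. Change variables $\mu = \alpha\lambda$, split at $\mu = \alpha$, and use the two regimes of \eqref{Cre}; the rapid decay of $F^{(k)}$ off $[-2,2]$ controls the tail at large $\mu$, while for $p \leq 2(n+2)/(n+4)$ the weight $\mu^{(n+1)(1/p - 1/p') - 1}$ stays nonnegative, so the spectral integral is majorized by $\alpha^{-(n+1)(1/p - 1/p')}\|F^{(k)}\|_{L^2}^2$, yielding
\[
\|F^{(k)}(\alpha\sqrt{\Delta})\|_{L^p \to L^{p'}} \leq C\,\alpha^{-(n+1)(1/p - 1/p')/2}\|F^{(k)}\|_{L^2}.
\]
Now invoke Lemma~\ref{lem:doubling} with scale $\sigma = \min(1, C\,2^k\alpha)$: when $2^k\alpha \leq 1$ the cutoff $\chi_{d \leq 1}$ is redundant and we apply the lemma with $\sigma = C\,2^k\alpha$, so that $\alpha \leq 2^{-k}$; when $2^k\alpha \geq 1$ we use $\sigma = 1$, so that $\alpha \geq 2^{-k}$. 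In both cases these constraints cancel the $\alpha$-dependence exactly, producing the uniform bound
\[
\|F^{(k)}(\alpha\sqrt{\Delta})\chi_{d \leq 1}\|_{L^p \to L^p} \leq C\,2^{k((n+1)(1/p - 1/2) - s)}\|F\|_{H^s}.
\]

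Summing over $k \geq 0$ (and handling the $k = 0$ piece separately by a trivial $L^2 \to L^2$ bound combined with Lemma~\ref{lem:doubling}) yields a convergent geometric series precisely when $s > (n+1)(1/p - 1/2)$, which is the hypothesis of the proposition. The main technical obstacle is the joint bookkeeping in the penultimate step: balancing the two regimes of \eqref{Cre}, the two choices of $\sigma$ in Lemma~\ref{lem:doubling}, and the dichotomy $\alpha \lessgtr 2^{-k}$, so that the $\alpha$-factors cancel to give a bound uniform in $\alpha \in (0,1]$. A secondary technical point is the Paley--Wiener decay estimate for $F^{(k)}$ off the support of $F$, which is what allows the weighted spectral integral at large $\mu$ to be absorbed into $\|F^{(k)}\|_{L^2}^2$.
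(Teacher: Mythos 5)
Your argument is essentially identical to the paper's proof: dyadic decomposition of $F$ in the Fourier variable, finite propagation speed to localize the $k$-th piece at scale $2^k\alpha$, Lemma~\ref{lem:doubling} to pass from $L^p\to L^2$ to $L^p\to L^p$ at the cost of $(2^k\alpha)^{(n+1)(1/p-1/2)}$, a $TT^*$ argument combined with the two-regime restriction estimate to bound $\|F^{(k)}(\alpha\sqrt{\Delta})\|_{L^p\to L^2}$ by $\alpha^{-(n+1)(1/p-1/2)}\|F^{(k)}\|_{L^2}$, and rapid decay of $F^{(k)}$ off $\supp F$ to control the large-$\mu$ tail (which the paper extracts as the separate $(1-\psi)F_l$ term via integration by parts). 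The one blemish is notational: in your $TT^*$ step the left-hand norm should be $L^p\to L^2$ rather than $L^p\to L^{p'}$ (the identity is $\|T\|_{L^p\to L^2}^2=\|T^*T\|_{L^p\to L^{p'}}$), and indeed your subsequent exponents and the application of Lemma~\ref{lem:doubling} with $q=2$ are only consistent with the corrected version.
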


\begin{proof}
We follow the proof of \cite[Section 2]{Guillarmou-Hassell-Sikora}.
Suppose $\eta$ is an even smooth function compactly supported on
$(-4, 4)$, satisfying $$\sum_{l \in \mathbb{Z}} \eta(2^{- l} t) = 1
\quad \mbox{for all $t \neq 0$.}$$ Thus we take a partition of unity
for $F(\blambda)$, say $F(\blambda) = F_0 + \sum_{l > 0}
F_l(\blambda)$, where \begin{eqnarray*}F_0(\blambda) & = &
\frac{1}{2\pi} \int_{- \infty}^\infty \sum_{l \leq 0} \eta(2^{- l}
t) \hat{F}(t) \cos(t \blambda) \, dt\\ F_l(\blambda) & = &
\frac{1}{2\pi} \int_{- \infty}^\infty \eta(2^{- l} t) \hat{F}(t)
\cos(t \blambda) \, dt \quad \quad \mbox{for $l > 0$}\end{eqnarray*}
By virtue of finite speed of propagation of $\cos(t P)$
\cite{C-G-T}, i.e. $$\text{supp}\,\cos(tP) \subset \{d(z, z^\prime)
\leq |t|\},$$ the kernel of $F_l(\alpha P) \chi_{d(z,z') \leq 1}$ is
supported on $$\{d(z, z^\prime) \leq 2^{l + 2} \alpha\}$$ as
$\eta(2^{-l} t)$ is supported on $(- 2^{l + 2}, 2^{l + 2})$.

 By Lemma~\ref{lem:doubling},
 \begin{multline}
 \|F(\alpha P)\chi_{d(z,z') \leq 1}\|_{L^p \rightarrow L^p} \leq \sum_{l \geq 0} \|F_l(\alpha P)\chi_{d(z,z') \leq 1}\|_{L^p \rightarrow L^p} \\ \leq C \sum_{l \geq 0} (2^l \alpha)^{(n + 1)(1/p - 1/2)} \|F_l(\alpha P)\|_{L^p \rightarrow L^2}.
\label{ldecomp} \end{multline}
  We take a further decomposition $$F_l(\alpha P) = \psi F_l(\alpha P) + (1 - \psi) F_l(\alpha P)$$ by a cutoff function $\psi$ supported on $(-4, 4)$ such that $\psi(\blambda) = 1$ for $\blambda \in (-2, 2)$.

Then a  $T^\ast T$ argument reduces $\|\psi F_l(\alpha P)\|_{L^p \rightarrow L^2}$ to the  restriction estimates. \begin{eqnarray*}\|\psi F_l(\alpha P)\|^2_{L^p \rightarrow L^2} &=& \||\psi F_l|^2 (\alpha P)\|_{L^p \rightarrow L^{p^\prime}} \\&\leq& \int_0^{4/\alpha} |\psi F_l(\alpha \blambda)|^2 \|dE_{P}(\blambda)\|_{L^p \rightarrow L^{p^\prime}} \, d\blambda \\&\leq& \frac{C}{\alpha} \int_0^4 |\psi F_l(\blambda)|^2 \|dE_{P} (\blambda/\alpha)\|_{L^p \rightarrow L^{p^\prime}} \, d\blambda\\
\leq \ \frac{C}{\alpha} \int_0^{\alpha}   |\psi F_l(\blambda)|^2  \,
d\blambda &+& \frac{C}{\alpha} \int_{\alpha}^4 |\psi
F_l(\blambda)|^2 \bigg( \frac{\blambda}{\alpha} \bigg)^{(n + 1)(1/p
- 1/p^\prime) - 1} \, d\blambda    ,
\end{eqnarray*}
where we used \eqref{Cre} in the last line. So
$$
\|\psi F_l(\alpha P)\|_{L^p \rightarrow L^2} \leq C \alpha^{-((n+1)/2)(1/p - 1/p')} \| \psi F_l \|_2 = C \alpha^{-(n+1)(1/p - 1/2)} \| \psi F_l \|_2.
$$
We obtain \begin{eqnarray} \sum_{l \geq 0} (2^l \alpha)^{(n + 1) (1/p -1/2)} \|\psi F_l(\alpha P)\|_{L^p \rightarrow L^2} & \leq& \sum_{l \geq 0} 2^{l(n + 1)(1/p - 1/2)} \|\psi F_l\|_2\\ &\leq& C \|F\|_{B_{1, 2}^{(n + 1)(1/p - 1/2)}}\\&\leq& C \|F\|_{H^s} \quad \quad \mbox{for $s > (n + 1)(1/p - 1/2)$}.
\label{psiFl}\end{eqnarray}

We next treat the terms involving $(1 - \psi) F_l$. This works exactly as in \cite[Section 2]{Guillarmou-Hassell-Sikora}.

Using restriction estimates as above, we have
\begin{align*}
\big\|(1 - \psi) F_l (\alpha P)\big\|^2_{L^p \rightarrow L^2} &=  \big\| \, |(1 - \psi) F_l|^2 (\alpha P)\big\|_{L^p \rightarrow L^{p^\prime}} \\
&\leq    \frac{C}{\alpha} \int_{2}^\infty \Big|(1 - \psi)(\blambda)
F_l(\blambda)\Big|^2 \bigg(\frac{\blambda}{\alpha}\bigg)^{(n +
1)(1/p - 1/p^\prime)-1} \, d\blambda
\end{align*}
where we used the fact that $\blambda \geq 2$ on the support of $1 -
\psi$. Note that
\begin{equation}
(1-\psi(\blambda)) F_l(\blambda) = \frac{1-\psi(\blambda)}{2\pi}
\int_\mathbb{R} \int_0^1 e^{it(\blambda - \blambda^\prime)}
\eta(2^{-l} t) F(\blambda^\prime) \, d\blambda^\prime dt
\label{ibp}\end{equation} and
$$
e^{it(\blambda - \blambda^\prime)} = \frac1{i^N(\blambda -
\blambda^\prime)^{N}} \frac{d^N}{dt^N} e^{it(\blambda -
\blambda^\prime)},
$$
where $\blambda - \blambda^\prime \geq \blambda/2$ for $\blambda \in
\supp 1 - \psi$ and $\blambda' \in \supp F$. Using this identity in
\eqref{ibp} and integrating by parts in $t$ yields
$$
\Big|\big((1 - \psi) F_l\big)(\blambda)\Big| \leq C \blambda^{- N}
2^{- N(l - 1)} \|F\|_2
$$
for any $N \in \mathbb{Z}_+$. Taking $N$ sufficiently large, we obtain
\begin{equation}
\sum_l (2^l \alpha)^{(n + 1)(1/p - 1/2)} \Big\|\big((1 - \psi) F_l\big)(\alpha P)\Big\|_{L^p \rightarrow L^2} \leq C \|F\|_{L^2} \leq C \|F\|_{H^s}.
\label{1-psiFl}\end{equation}
Combining \eqref{psiFl} and \eqref{1-psiFl} yields
$$
\sum_l (2^l \alpha)^{(n + 1)(1/p - 1/2)} \big\| F_l (\alpha P)\big\|_{L^p \rightarrow L^2} \leq  C \|F\|_{H^s},
$$
and together with \eqref{ldecomp} this proves the Proposition.

\end{proof}

\subsection{Away from the diagonal on asymptotically hyperbolic manifolds}
It remains to treat the kernel $F(\alpha P) \chi_{d(z,z') \geq 1}$. We will show that $F(\alpha P) \chi_{d(z,z') \geq 1}$ maps $L^p + L^2$ to $L^2$, with an operator norm uniform in $\alpha \in (0, 1]$. It suffices to show that $F(\alpha P) \chi_{d(z,z') \geq 1}$ maps $L^2 \to L^2$ , and 
$L^1 \to L^2$, with operator norms uniform in $\alpha$. The first statement follows from the fact that $F \in H^{(n+1)/2} \implies F \in L^\infty$, together with the result of Section~\ref{subsec:dist<1}. {In fact, we have proved $$\sup_{0 < \alpha < 1} \|F(\alpha \sqrt{\Delta}) \chi_{d(z, z') \leq 1}\|_{L^p \rightarrow L^p} \leq C \|F\|_{H^{(n+1)/2}}$$ provided $1 \leq p \leq 2(n + 2)/(n + 4)$. Noting the spectral multiplier is symmetric, we conclude that this operator is $L^p$ bounded for $1 < p < \infty$. Consequently, $F(\alpha \sqrt{\Delta}) \chi_{d(z, z') \geq 1}$ is $L^2$ bounded.} So in the remainder of this subsection, we show boundedness from $L^1$ to  $L^2$, with an operator norm uniform in $\alpha$.

Let $K_\alpha(z, z')$ denote the Schwartz kernel of $F(\alpha P)\chi_{d(z,z') \geq 1} $.
By Minkowski's inequality, the $L^1 \to L^2$ operator norm is bounded by
 $$\sup_{z^\prime} \bigg(\int \big|K_\alpha(z, z^\prime)\big|^2 \,d\mu_z\bigg)^{1/2}.$$
 Using the spectral theorem we have $$K(z, z^\prime) = \int_0^\infty F(\blambda) dE_{P}(\blambda)(z,z') \  \cdot \chi_{\{d(z, z^\prime) > 1\}}(z, z^\prime).$$

We use coordinates $(z', r, \omega)$ as in Section~\ref{subsec:meas}. Using Lemma~\ref{lem:measure}, we may estimate the Riemannian measure by $C e^{nr} dr d\omega$. Therefore, it suffices to bound
$$
\int_{\{r > 1\}} \bigg|\int_{0}^\infty F(\alpha \blambda)
dE_{P}(\blambda)(r, \omega, z^\prime) \, d\blambda \bigg|^2 e^{nr}
\, drd\omega.
$$

Using \eqref{bpm}, we expand the kernel of the spectral measure as follows:
\begin{equation}\begin{gathered}
dE_P(\blambda)(z,z') = \sum_{\pm}  e^{\pm i\blambda r} \Big( \sum_{j=0}^{[n/2]} \blambda^{n/2-j} b_{\pm,j}(z', r, \omega) e^{-nr/2} + c(\blambda, z', r, \omega) e^{-nr/2} \Big) \\
+    (\rho_L \rho_R)^{n/2+i\blambda} \, a_+ + (\rho_L
\rho_R)^{n/2-i\blambda} \, a_- + (xx')^{n/2+i\blambda} \, \tilde a_+
+ (xx')^{n/2-i\blambda} \, \tilde  a_-  
\end{gathered} \label{sm-exp}\end{equation}
where $b_{\pm,j}$ and $c$ are bounded, and where $a_\pm, \tilde
a_\pm$ are as in Theorem~\ref{thm:kernelbounds1}. Here, $c$ is
smooth in $\blambda$ at $\blambda = 0$ (due to our assumption that
the resolvent kernel is holomorphic at the bottom of the spectrum),
and decays as $O(\blambda^{-1/2})$ as $\blambda \to \infty$ for $n$
odd, or $O(\blambda^{-1})$ as $\blambda \to \infty$ for $n$ even.
Moreover, $c$ obeys symbolic estimates as $\blambda \to \infty$, so
$|d_\blambda c | = O(\blambda^{-3/2})$ as $\blambda \to \infty$ when
$n$ is odd, or $O(\blambda^{-2})$ when $n$ is even.

We now consider a single term $b_{\pm, j}$  in \eqref{sm-exp}. Thus, we need to estimate
\begin{equation}
\int_{\{r > 1\}} \bigg|\int_{0}^\infty F(\alpha \blambda) e^{\pm i
\blambda r} \blambda^{n/2-j} b_{\pm, j}(r, \omega, z^\prime)
e^{-nr/2} \, d\blambda \bigg|^2 e^{nr} \, drd\omega
\label{bj}\end{equation} uniformly in $\alpha$ and $z'$. Arbitrarily
choosing the sign $+$, using the uniform boundedness of $b_{j,
\pm}$, and simplifying, it is enough to uniformly bound
\begin{equation}
\int_{\{r > 1\}} \bigg|\int_{0}^\infty F(\alpha \blambda) e^{ i
\blambda r} \blambda^{n/2-j}   \, d\blambda \bigg|^2  \, dr.
\label{bj2}\end{equation}

To estimate this, we prove the following lemma.
\begin{lemma}\label{lem:FG}
Suppose that $F \in H^{(n+1)/2}(\RR)$ and $G(\blambda) =
\theta(\blambda) \blambda^m \phi(\blambda)$, where $\phi \in
C_c^\infty(\RR)$, $\theta$ is the Heaviside function, and where $0 <
m \leq n/2$. Then $\hat F * \hat G$ satisfies
\begin{equation}
\int_{r \geq R} \big| (\hat F * \hat G)(r) \big|^2 \, dr = O(R^{-(2m+1)}).
\end{equation}
\end{lemma}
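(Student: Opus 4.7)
Let $H = F\phi \in H^{(n+1)/2}(\RR)$, which is compactly supported, so $FG = \theta\blambda^m H$. The plan is to decompose $FG$ using a cutoff $\chi \in C_c^\infty(\RR)$ equal to $1$ near the origin,
\begin{equation*}
FG \;=\; H(0)\,\theta\blambda^m\chi \;+\; \theta\blambda^m(H-H(0))\chi \;+\; \theta\blambda^m H(1-\chi),
\end{equation*}
and estimate the tail of the Fourier transform of each piece separately.

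For the first piece, write $\theta\blambda^m = \Gamma(m+1)\chi_+^m$ and use the distributional Fourier transform $\widehat{\chi_+^m}(r) = c_m(r-i0)^{-m-1}$, which away from the origin is a smooth function of magnitude comparable to $|r|^{-m-1}$. Convolving with the Schwartz function $\hat\chi$ yields the pointwise bound $|\widehat{\theta\blambda^m\chi}(r)| \leq C(1+|r|)^{-m-1}$, and hence $\int_R^\infty |H(0)\widehat{\theta\blambda^m\chi}|^2\,dr \leq CR^{-(2m+1)}$, which is the target rate. For the third piece, $\theta\blambda^m H(1-\chi)$ is compactly supported and, as the product of the $H^{(n+1)/2}$ function $H$ with a smooth function supported away from $\blambda=0$, itself lies in $H^{(n+1)/2}$; Plancherel gives
\begin{equation*}
\int_R^\infty |\widehat{\theta\blambda^m H(1-\chi)}|^2\,dr \leq CR^{-(n+1)} \leq CR^{-(2m+1)},
\end{equation*}
the last inequality following from the assumption $m \leq n/2$.

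For the middle piece, $(H-H(0))\chi$ vanishes at the origin and factors as $\blambda H_1\chi$ with $H_1 \in H^{(n-1)/2}$; this yields $\theta\blambda^m(H-H(0))\chi = \theta\blambda^{m+1}H_1\chi$, which is an instance of the original problem with $m$ replaced by $m+1$ and the Sobolev index dropped by one. Iterating this Taylor-type reduction produces a sum of terms of the form $H_j(0)\theta\blambda^{m+j}\chi$ (whose tails $O(R^{-(2m+2j+1)})$ are dominated by $R^{-(2m+1)}$), together with a remainder whose Sobolev-based tail bound meets the target provided the number of iterations $k$ satisfies $k \leq (n-2m)/2$. The main technical obstacle is the borderline case $m = n/2$, in which no iteration is available; there one must exploit directly that $(H-H(0))$ vanishes at $\blambda=0$ with Hölder order close to $n/2$ (via the Sobolev embedding $H^{(n+1)/2}(\RR)\hookrightarrow C^{n/2-\epsilon}$) and combine this with the $\chi_+^m$-type singularity of $\theta\blambda^m$ to close the final estimate.
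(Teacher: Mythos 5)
Your decomposition handles the first and third pieces correctly, but there is a genuine gap in the treatment of the middle (remainder) piece, and it occurs precisely at the endpoint $m=n/2$ that the lemma — and its application to the $j=0$ term of \eqref{sm-exp} — requires. Your iteration closes only when the final remainder $\theta\blambda^{m+k}H_k\chi$ lies in $H^{m+1/2}$, which forces $(n+1)/2-k\ge m+1/2$ together with $k\ge1$, i.e.\ $m\le n/2-1$; for $m\in(n/2-1,\,n/2]$ no admissible number of iterations exists. Your sketched fix for $m=n/2$ does not work as stated: although $H^{(n+1)/2}(\RR)$ embeds into $C^{k,\alpha}$ with $k+\alpha=n/2$, the H\"older exponent controls the top derivative, not the vanishing order, so $H-H(0)$ generically vanishes at $\blambda=0$ only to first order (take $H(\blambda)=\blambda$ near $0$), not to order $n/2-\epsilon$. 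Even after a full Taylor expansion to order $\lfloor n/2\rfloor$, a pointwise bound $O(|\blambda|^{n/2-\epsilon})$ on the Taylor remainder is not enough: to conclude by Plancherel you need the product of that remainder with $\theta\blambda^{m}\chi$ to lie in $H^{m+1/2}$, and the conormal factor $\theta\blambda^{m}\chi$ only lies in $H^{m+1/2-\epsilon}$, so this route yields $O(R^{-(2m+1)+2\epsilon})$, just short of the target.

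The paper's proof avoids the endpoint difficulty entirely by working on the Fourier side: homogeneity of $\theta(\blambda)\blambda^m$ gives the pointwise bound $|\hat G(r)|\le C\langle r\rangle^{-m-1}$ (this is your ``first piece'' computation, applied to all of $G$ at once rather than only to the constant term), and the tail of $\hat F*\hat G$ is then estimated directly by splitting the convolution into $|s|\le r/2$ (Cauchy--Schwarz plus the change of variable $r'=r-s$, using $\int\langle r'\rangle^{n+1}|\hat F(r')|^2\,dr'<\infty$ and $n+1\ge 2m+1$) and $|s|\ge r/2$ (where $\langle s\rangle^{-m-1}\le C\langle r\rangle^{-m-1}$). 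The essential point is that the conclusion is a \emph{tail} estimate, strictly weaker than membership of $FG$ in $H^{m+1/2}$, and estimating the convolution directly captures exactly this weaker statement uniformly for all $0<m\le n/2$. To repair your argument you would need to replace the Sobolev-regularity treatment of the remainder by such a convolution estimate, at which point the preliminary physical-space decomposition becomes unnecessary.
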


\begin{proof}
We first observe that $|\hat G(r)| \leq \ang{r}^{-m-1}$. Indeed,
since the function $\theta(\blambda) \blambda^{m}$ is homogeneous of
degree $m$, the Fourier transform is homogeneous of degree $-1-m$,
and hence is $O(\ang{r}^{-1-m})$ as $r \to \infty$. The Fourier
transform $\hat G$ is therefore this homogeneous function convolved
with $\hat \phi$. As $\hat \phi \in \mathcal{S}(\RR)$,  $\hat G$ is
$L^\infty$, and still decays as $O(\ang{r}^{-1-m})$ as $r \to
\infty$.

It therefore suffices to show that
$$
\int_{r \geq R} \Big| \int |\hat F(r-s)| \ang{s}^{-m-1} \, ds \Big|^2 \, dr = O(R^{-(2m+1)}).
$$
We break the RHS into
\begin{equation}
\int_{r \geq R} \bigg| \int_{|s| \leq r/2} |\hat F(r-s)| \ang{s}^{-m-1} \, ds + \int_{|s| \geq r/2} |\hat F(r-s)| \ang{s}^{-m-1} \, ds \ \bigg|^2 \, dr.
\end{equation}
Using the inequality $(a+b)^2 \leq 2(a^2 + b^2)$, we estimate this by
\begin{equation}
2 \int_{r \geq R} \bigg| \int_{|s| \leq r/2} |\hat F(r-s)| \ang{s}^{-m-1} \, ds  \ \bigg|^2 \, dr +
2 \int_{r \geq R} \bigg|  \int_{|s| \geq r/2} |\hat F(r-s)| \ang{s}^{-m-1} \, ds \ \bigg|^2 \, dr.
\label{twoterms}\end{equation}
The first of these terms we treat as follows. We apply Cauchy-Schwarz to the inner integral, obtaining
\begin{equation}
2 \int_{r \geq R} \bigg(  \int_{|s| \leq r/2} \ang{s}^{-m-1} \bigg) \bigg( \int_{|s'| \leq r/2} |\hat F(r-s')|^2 \ang{s'}^{-m-1} \, ds'  \ \bigg) \, dr .
\label{term1}\end{equation}
The $s$ integral just gives a constant. In the second integral, we change variable to $r' = r-s'$, and note that $r' \geq r-r/2 \geq R/2$. The $s'$ integral again gives a constant, and we get an upper bound of the form
\begin{equation}
2C \int_{r' \geq R/2}  |\hat F(r')|^2  \, dr' .
\end{equation}
We can insert a factor $(2R)^{-(2m+1)} \ang{r'}^{n+1}$, since $r' \geq 2R$ and $n+1 \geq 2m+1$. This finally gives an estimate of the form $C \| F \|_{H^{(n+1)/2}}^2$ for the first term of \eqref{twoterms}.

For the second term of \eqref{twoterms}, we estimate $\ang{s}^{-m-1} \leq  C \ang{r}^{-m-1}$. This allows us to estimate this term by
 \begin{equation}
2 \| \hat F \|_{L^1}^2 \int_{r \geq R} \ang{r}^{-2m-2}  \, dr \leq C \| F \|_{H^{1/2 + \epsilon}}^2 R^{-(2m+1)} \text{ for any } \epsilon > 0.
\end{equation}
This completes the proof.
\end{proof}

We return to \eqref{bj2}, which we write in the form
\begin{equation}
\alpha^{-n+2j} \int_{\{r > 1\}} \bigg|\int_{0}^\infty F(\alpha
\blambda) e^{ i \blambda r} (\alpha \blambda)^{n/2-j}   \, d\blambda
\bigg|^2  \, dr. \label{bj3}\end{equation} We change variables to
$\blambda' = \alpha \blambda$ and $r' = r/\alpha$. We also choose
$\phi \in C_c^\infty(\RR)$ to be identically $1$ on the support of
$F$, and write $G(\blambda') = \theta(\blambda') {\blambda'}^{n/2-j}
\phi(\blambda')$. The integral becomes
\begin{equation}
\alpha^{-n-1+2j} \int_{\{r' > 1/\alpha \}}
\bigg|\int_{-\infty}^\infty F(\blambda') G(\blambda') e^{ i
\blambda' r'}   \, d\blambda' \bigg|^2  \, dr'.
\label{bj4}\end{equation} The $\blambda'$ integral gives us $(\hat F
* \hat G)(r')$. Applying Lemma~\ref{lem:FG} with $m=n/2-j$ and $R = \alpha^{-1}$, we see that \eqref{bj4} is
bounded uniformly in $\alpha$, as required.

We next consider the terms involving $c$, $a_\pm$, and $\tilde a_{\pm}$. The argument for all these terms is similar, so just consider $c$. In this case, we need a uniform bound on
\begin{equation}
\int_{r > 1} \Big| \int_0^\infty F(\alpha \blambda) e^{\pm i\blambda
r} c(\blambda, z', r, \omega) \, d\blambda \Big|^2 \, dr.
\label{cterm}\end{equation} We use the identity
$$
e^{\pm i\blambda r} = \pm \frac1{ir} \frac{d}{d\blambda} e^{\pm
i\blambda r}.
$$
We integrate by parts. This gives us
\begin{equation}
\int_{r > 1} \Big| \int_0^\infty e^{\pm i\blambda r}
\frac{d}{d\blambda} \Big( F(\alpha \blambda)  c(\blambda, z', r,
\omega) \Big) \, d\blambda \Big|^2 \, \frac1{r^2} dr.
\label{c}\end{equation} When the derivative falls on $F$, we get
$\alpha F'(\alpha \blambda)$. Since $F \in H^1(\RR)$, with compact
support, the function $\alpha F'(\alpha \blambda)$ is $L^1$, with
$L^1$ norm uniformly bounded in $\blambda$. Since $c$ is uniformly
bounded, this gives us a uniform bound on the $\blambda$ integral in
\eqref{c}. When the derivative falls on $c$, using the symbol
estimates, we find that $d_\blambda c$ is integrable in $\blambda$,
and then we can use the fact that $F \in L^\infty(\RR)$ to see that
in this case also, the $\blambda$ integral in \eqref{c} is uniformly
bounded. Finally, the $r$ integral is convergent, so that
establishes the uniform bound on \eqref{cterm}.

Using the inequalities
\begin{equation}
xx' \leq C \rho_L \rho_R  \leq C' e^{-r}, 
\end{equation}
(where the second inequality follows from Proposition~\ref{prop:dist}), the same argument works for the $a_\pm$ and $\tilde a_{\pm}$ terms.

This completes the proof of Theorem~\ref{thm:sm}.

\begin{flushleft}
\vspace{0.3cm}\textsc{Xi Chen\\Department of Mathematics\\Mathematical Sciences
Institute\\Australian National University\\Canberra 0200\\Australia}

\emph{Current address}: \\
\textsc{Riemann Center for Geometry and Physics\\Leibniz Universit\"{a}t Hannover\\Hannover 30167\\Germany}

\emph{E-mail address}: \textsf{xi.chen@anu.edu.au}

\end{flushleft}

\begin{flushleft}
\vspace{0.3cm}\textsc{Andrew Hassell\\Department of Mathematics\\Mathematical Sciences
Institute\\Australian National University\\Canberra 0200\\Australia}

\emph{E-mail address}: \textsf{andrew.hassell@anu.edu.au}

\end{flushleft}

\end{document}